\documentclass[12pt]{amsart}

\usepackage{etex}
\usepackage[english]{babel}
\usepackage[cp1250]{inputenc}
\usepackage{amssymb,amsthm,amsfonts,amsmath}
\usepackage{mathrsfs}
\usepackage{verbatim}
\usepackage{enumitem}

\usepackage{url}
\usepackage{amsopn}
\usepackage{graphicx}
\usepackage[usenames, dvipsnames]{color}
\DeclareGraphicsExtensions{.pdf,.png,.jpg}

\usepackage{enumitem,titletoc}
\definecolor{darkblue}{rgb}{0.0, 0.0, 0.55}
\usepackage[colorlinks,linkcolor=BrickRed,citecolor=OliveGreen,urlcolor=darkblue,hypertexnames=true]{hyperref}
\usepackage[a4paper,margin=2.5cm]{geometry}
\hfuzz=5.002pt

\linespread{1.10}

\setcounter{tocdepth}{3}

\def\CC{\mathbb{C}}
\def\RR{\mathbb{R}}
\def\NN{\mathbb{N}}
\def\ZZ{\mathbb{Z}}

\def\sl{\mathfrak{sl}}
\def\gl{\mathfrak{gl}}
\def\Sym{\text{Sym}}
\def\C{\mathcal{C}}
\def\R{\mathcal{R}}
\def\I{\mathcal{I}}
\def\rf{\mathrm{Ref}}
\def\loc{\mathrm{Loc}}
\def\span{\mathrm{span}}
\def\endo{\mathrm{End}}
\def\sl2{\mathfrak{sl}_2}
\def\sl{\mathfrak{sl}}
\def\tr{\mathrm{tr}}
\def\id{\mathrm{id}}
\def\ev{\mathrm{ev}}

\def\sym{\mathrm{Sym}}
\def\Lin{\mathrm{Lin}}

\usepackage[english]{babel}
\usepackage[cp1250]{inputenc}
\usepackage{amssymb,amsthm,amsfonts,amsmath}
\usepackage{mathrsfs}
\usepackage{verbatim}
\usepackage{url}
\usepackage{color}
\usepackage[usenames,dvipsnames]{xcolor}
\usepackage{amsopn}
\usepackage{enumitem}
\usepackage{multicol}
\usepackage{verbatim}

\newtheorem{proposition}{Proposition}

\newtheorem{thm}{Theorem}
\newtheorem{corollary}{Corollary}
\newtheorem{lemma}{Lemma}

\begin{document}
\title[A local-global principle for linear dependence]{A local-global principle for linear dependence in enveloping algebras of Lie algebras}

\author[J.\ Cimpri\v c]{Jaka Cimpri\v c${}^1$}
\address{Jaka Cimpri\v c, Faculty of Mathematics and Physics, University of Ljubljana, Slovenia}
\email{jaka.cimpric@fmf.uni-lj.si}
\thanks{${}^1$Supported by the Slovenian Research Agency grant P1-0222.}

\author[A. Zalar]{Alja\v z Zalar${}^2$}
\address{Alja\v z Zalar, Faculty of Computer and Information Science, University of Ljubljana, Slovenia}
\email{aljaz.zalar@fri.uni-lj.si} 
\thanks{${}^2$Supported by the Slovenian Research Agency grant P1-0288.}

\subjclass[2010]{16W10, 17B10}

\date{\today}
\keywords{Lie algebra, linear dependence, local linear dependence, Nullstellensatz}

\maketitle

\begin{abstract}
For an associative algebra $A$ and a class $\C$ of representations of $A$ the following question (related to nullstellensatz) makes sense:
Characterize all tuples of elements $a_1,\ldots,a_n \in A$ such that vectors $\pi(a_1)v,\ldots,\pi(a_n)v$ are linearly dependent
for every $\pi \in \C$ and every $v$ in the representation space of $\pi$. We answer this question in the following cases:
\begin{enumerate}
\item $A=U(L)$ is the enveloping algebra of a finite-dimensional complex Lie algebra $L$ and $\C$ is the class of all finite-dimensional representations of $A$.
\item $A=U(\sl_2(\CC))$ and $\C$ is the class of all finite-dimensional irreducible representations of $A$.
\item $A=U(\sl_3(\CC))$ and $\C$ is the class of all finite-dimensional irreducible representations of $A$ with sufficiently high weights.
\end{enumerate}
In case (1) the answer is: tuples that are linearly dependent over $\CC$ while in cases (2) and (3) the answer is: tuples that are linearly dependent over the center of $A$. 
Similar results have been proved before for free algebras and Weyl algebras.
\end{abstract}


Let $A$ be a complex associative algebra and let $\C$ be a class of representations of $A$.  
We say that the elements $p_1,\ldots,p_k \in A$ are $\C$-\textit{locally linearly dependent} (abbreviated as $\C$-LLD) if for every
representation $\pi:A\to \endo(V_\pi)$ in $\C$ we have that $\pi(p_1),\ldots,\pi(p_k)$ are linearly dependent. 
We say that elements $p_1,\ldots,p_k \in A$ are $\C$-\textit{locally directionally linearly dependent} (abbreviated as $\C$-LDLD) if for every
representation $\pi:A\to \endo(V_\pi)$ in $\C$ and every vector $v \in V_\pi$ we have that $\pi(p_1)v,\ldots,\pi(p_k)v$ are linearly dependent. 
Clearly, linear dependence implies $\C$-LLD which implies $\C$-LDLD. The opposite implications are false in general. 
The motivation for this terminology comes from \cite{bk}.

Our first main result is the following theorem, proved in Section \ref{sec-proof-Theo1}. 

\begin{thm}\label{mainthm} 	Let $L$ be a finite-dimensional complex Lie algebra, $U(L)$ its universal enveloping algebras and $\R$ the class of all finite-dimensional representations of $U(L)$.
For any elements $p_1,\ldots,p_k \in U(L)$ the following are equivalent:
    \begin{enumerate}
    	\item\label{mainthm-pt1} $p_1,\ldots,p_k$ are linearly dependent.
    	\item\label{mainthm-pt2} $p_1,\ldots,p_k$ are $\R$-locally linearly dependent.
    	\item\label{mainthm-pt3} $p_1,\ldots,p_k$ are $\R$-locally directionally linearly dependent.
    \end{enumerate}
\end{thm}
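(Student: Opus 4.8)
The plan is to prove the cycle $(1)\Rightarrow(2)\Rightarrow(3)\Rightarrow(1)$, where only the last implication $(3)\Rightarrow(1)$ carries content; the first two are trivial as already noted in the excerpt. So the heart of the matter is: if $p_1,\dots,p_k\in U(L)$ are $\R$-LDLD, then they are linearly dependent over $\CC$. I would argue by contraposition: assume $p_1,\dots,p_k$ are linearly independent in $U(L)$ and produce a finite-dimensional representation $\pi$ and a vector $v$ with $\pi(p_1)v,\dots,\pi(p_k)v$ linearly independent. The key structural fact I would invoke is that $U(L)$ is \emph{residually finite-dimensional}: for a finite-dimensional Lie algebra $L$ over a field of characteristic zero, $U(L)$ embeds into a product of finite-dimensional algebras, equivalently the finite-dimensional representations of $U(L)$ separate points. (This follows e.g.\ from the fact that $U(L)$ is an Ore domain with enough finite-dimensional quotients, or from Harish-Chandra / Ado-type results; it is also the reason the class $\R$ is the ``right'' class here.) Thus each single nonzero element of $U(L)$ is already detected by some finite-dimensional $\pi$, but to detect \emph{linear independence} of a whole tuple I need a sharper statement.

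The main step is an amplification lemma: if $p_1,\dots,p_k$ are linearly independent over $\CC$, then there is a \emph{single} finite-dimensional representation $\pi$ in which $\pi(p_1),\dots,\pi(p_k)$ remain linearly independent as operators. To get this, pick for each nonzero element of the (finite-dimensional) space $P=\span_\CC\{p_1,\dots,p_k\}$ a finite-dimensional representation killing neither that element nor, more efficiently, choose finitely many representations $\pi_1,\dots,\pi_m$ whose direct sum $\pi=\bigoplus_j\pi_j$ is faithful on $P$ — possible because $P$ is finite-dimensional and $U(L)$ is residually finite-dimensional, so the common kernel of all finite-dimensional representations restricted to $P$ is $0$, and by a dimension/Noetherian argument finitely many suffice. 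Then $\pi(p_1),\dots,\pi(p_k)$ are linearly independent operators on $V_\pi$. Finally, from linear independence of operators $T_1=\pi(p_1),\dots,T_k=\pi(p_k)$ on a finite-dimensional space $V_\pi$ I would extract a single cyclic-type vector $v$ with $T_1v,\dots,T_kv$ linearly independent: consider the linear map $V_\pi\to V_\pi^{\oplus k}$, $v\mapsto(T_1v,\dots,T_kv)$; the set of $v$ for which the image lies in a fixed proper subspace ``$k$-tuples satisfying one nontrivial linear relation $\sum c_iT_iv=0$'' is the kernel of the operator $\sum c_iT_i$, a proper subspace of $V_\pi$ for each $(c_i)\neq0$ precisely when no $\sum c_iT_i$ vanishes identically, i.e.\ by linear independence of the $T_i$; a finite union of proper subspaces (after passing to a suitable finite spanning set of directions, or directly over the infinite field $\CC$ using that a vector space over an infinite field is not a finite union of proper subspaces) cannot be all of $V_\pi$, so a good $v$ exists. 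Replacing $\pi$ by $\pi\oplus\pi\oplus\cdots$ if one prefers many directions at once is an alternative route to the same conclusion.

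The step I expect to be the main obstacle is the residual finite-dimensionality input together with the ``finitely many representations suffice'' reduction: one must be careful that $U(L)$ for \emph{arbitrary} finite-dimensional complex $L$ — not just semisimple or nilpotent — does have enough finite-dimensional representations to separate points of any given finite-dimensional subspace, and to package this cleanly I would likely cite the standard fact that $U(L)$ is residually finite-dimensional (e.g.\ via Birkhoff's theorem that the augmentation-type ideals $I_n$ with $\dim U(L)/I_n<\infty$ intersect in $0$, or via the embedding $L\hookrightarrow\gl_n$ when $L$ is algebraic plus a reduction, or Harish-Chandra's theorem on the injectivity of $U(L)$ into its ``finite part''). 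The remaining linear-algebra manipulations — from separating the finitely many basis elements to a single faithful-on-$P$ representation, and from linearly independent operators to one good vector — are routine and I would not grind through them; the one subtlety worth stating explicitly is the use of $\CC$ being infinite so that $V_\pi$ is not a finite union of the proper kernels $\ker(\sum c_iT_i)$.
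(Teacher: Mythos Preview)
Your main line of argument contains a genuine gap at the step where you pass from linearly independent operators $T_1,\dots,T_k$ on a finite-dimensional $V_\pi$ to a single vector $v$ with $T_1v,\dots,T_kv$ linearly independent. The ``bad'' set $\{v:\sum c_iT_iv=0\text{ for some }(c_i)\neq0\}$ is a union over the \emph{infinite} family $(c_i)\in\mathbb{P}^{k-1}$, and there is no way to ``pass to a suitable finite spanning set of directions'': the condition that $T_1v,\dots,T_kv$ be linearly dependent is Zariski-closed in $v$ and can hold on all of $V_\pi$. Concretely, $T_1=E_{11}$ and $T_2=E_{12}$ in $M_2(\CC)$ are linearly independent, yet $T_1v,T_2v\in\CC e_1$ for every $v$. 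The paper itself flags exactly this phenomenon (the matrix units $E_{ij}$ are $\{\id\}$-LDLD in $M_n(\CC)$ though linearly independent), so the obstruction is not exotic.

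Your throwaway ``alternative route'' --- replacing $\pi$ by $\pi^{\oplus N}$ with $N=\dim V_\pi$ and taking $v=e_1\oplus\cdots\oplus e_N$ --- is not an alternative but the only route that works, and it does work: then $\pi^{\oplus N}(p_i)v$ is the vectorization of $\pi(p_i)$, and these vectors are independent exactly when the operators are. With that fix your argument is complete, provided residual finite-dimensionality of $U(L)$ is taken as known (it is, e.g.\ via Ado plus the classical fact that finite-dimensional representations of a semisimple Lie algebra separate points of its enveloping algebra; your other suggested justifications are vaguer and I would not lean on them). This is then a genuinely different and more conceptual proof than the paper's. The paper proceeds constructively: it embeds $L\hookrightarrow\sl_n$ by Ado, builds the explicit representation $\pi_{n,d}=\bigoplus_{k=1}^d\Sym^k(\rho_n^{\oplus n})$ together with a specific vector $v_{n,d}$, and verifies by a direct tensor computation that all PBW monomials of degree $\le d$ applied to $v_{n,d}$ are linearly independent. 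The paper's route is self-contained and in effect reproves residual finite-dimensionality along the way; yours is shorter but imports that fact as a black box.
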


The analogue of Theorem \ref{mainthm} for free algebras was proved in \cite{bk}. The analogue for the algebra $M_n(\CC)$ of all complex $n \times n$ matrices is trivial.
Namely, let $\pi$ be the direct sum of $n$ copies of the identity representation of $M_n(\CC)$ and let $v$ be the direct sum of all elements of the standard
basis of $\CC^n$. Then \eqref{mainthm-pt3} implies that $\pi(p_1)v,\ldots,\pi(p_n)v$ are linearly dependent which implies \eqref{mainthm-pt1} since each $\pi(p_i)v$ is just a vectorization of $p_i$. See also Lemma \ref{lem1t1} below.

Our second main result, whose proof is given in Section \ref{proof-of-sec2}, is:

\begin{thm}
\label{mainthm2} 
	Let $\sl_2$ be the Lie algebra of trace-zero $2 \times 2$ complex matrices and $\I$ the class of all finite-dimensional 
	irreducible representations of its universal enveloping algebra $U(\sl_2)$.
For any elements $p_1,\ldots,p_k \in U(\sl_2)$ the following are equivalent:
    \begin{enumerate}
        \item\label{mainthm2-pt1} There exist $z_1,\ldots,z_k$ in the center of $U(\sl_2)$ which are not all zero such that $z_1 p_1+\ldots+z_k p_k=0$. 
        \item\label{mainthm2-pt2} $p_1,\ldots,p_k$ are $\I$-locally linearly dependent.
        \item\label{mainthm2-pt3} $p_1,\ldots,p_k$ are $\I$-locally directionally linearly dependent.
    \end{enumerate}
\end{thm}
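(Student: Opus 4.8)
The plan is to prove the cycle $\eqref{mainthm2-pt1}\Rightarrow\eqref{mainthm2-pt2}\Rightarrow\eqref{mainthm2-pt3}\Rightarrow\eqref{mainthm2-pt1}$, where only the last implication requires work. The first implication holds because if $z_1p_1+\dots+z_kp_k=0$ with $z_i$ central and not all zero, then for any irreducible finite-dimensional $\pi$ each $\pi(z_i)$ is a scalar $\lambda_i(\pi)$ by Schur's lemma, and $\sum_i\lambda_i(\pi)\pi(p_i)=0$; this is a nontrivial linear relation unless all $\lambda_i(\pi)=0$, but in that case $\pi(p_1),\dots,\pi(p_k)$ live in a space of matrices and one checks that at least a nontrivial dependence can still be extracted (more carefully: for all but finitely many highest weights the $\lambda_i(\pi)$ are not all zero, and the finitely many exceptional $\pi$ are handled separately). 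The second implication is immediate from the definitions.

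For the hard implication $\eqref{mainthm2-pt3}\Rightarrow\eqref{mainthm2-pt1}$, the plan is to use the explicit structure of $U(\sl_2)$. Recall $U(\sl_2)$ has PBW basis in $e,f,h$, and its center is the polynomial ring $\CC[\Omega]$ generated by the Casimir element $\Omega = ef+fe+\tfrac12 h^2$. Quotienting by the central character, the irreducible $(n+1)$-dimensional representation $\pi_n$ acts on $V_n=\Sym^n(\CC^2)$, and $\Omega$ acts as the scalar $c_n:=\tfrac{n^2}{2}+n$. The key is that $U(\sl_2)$ modulo the annihilator of the sum $\bigoplus_{n\ge 0}\pi_n$ embeds into $\prod_n \endo(V_n)$, and the quotient field of the center can be brought in via localization. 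Concretely, I would pass to the (Noetherian) ring $U(\sl_2)$, note it is a domain, let $Z=\CC[\Omega]$ be its center, and consider the localization $U(\sl_2)\otimes_Z \mathrm{Frac}(Z)$; the $\C$-LDLD hypothesis should force a linear dependence over $\mathrm{Frac}(Z)$ which then clears to a dependence over $Z$.

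The main step is turning the $\I$-LDLD hypothesis into an algebraic statement. For each $n$ and each $v\in V_n$, the vectors $\pi_n(p_1)v,\dots,\pi_n(p_k)v$ are linearly dependent, i.e. all $k\times k$ minors of the matrix $[\pi_n(p_1)v\,|\,\dots\,|\,\pi_n(p_k)v]$ vanish. Choosing $v$ to be (say) the highest weight vector or running over a basis, and using that $\pi_n(p_i)$ acting on a fixed weight vector has entries that are \emph{polynomial in $n$} (this is the crucial computation: the matrix coefficients of $\pi_n$ in the standard weight basis are given by fixed polynomial/rational formulas in the weight $n$, essentially because $e,f,h$ act by such formulas and $p_i$ is a fixed noncommutative polynomial in them), the vanishing of all these minors for infinitely many $n$ forces the corresponding polynomial identities to hold identically in $n$. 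Reinterpreting an identity that holds for all $n$ as an identity in $U(\sl_2)$ after adjoining $\Omega^{1/2}$-type quantities (or working with $c_n$ as the value of $\Omega$), one recovers a relation $\sum_i q_i(\Omega) p_i = 0$ with $q_i\in\CC[\Omega]$ not all zero. I expect the main obstacle to be precisely this interpolation step: showing that ``locally directionally linearly dependent for all $n$'' upgrades to a single relation with coefficients in $Z$, which requires (a) a careful choice of enough test vectors $v$ to see all of $p_i$, not just a fragment, and (b) controlling that the minor polynomials, though they involve square roots of the Casimir scalar through the standard normalization of the basis of $V_n$, can be cleared to honest polynomials in $\Omega$. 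A clean way around the square-root issue is to work in the basis where $e,f$ act without radicals (the ``divided-power'' or monomial basis of $\Sym^n$) so that all matrix entries of $\pi_n(p_i)$ are genuinely polynomial in $n$, hence in $c_n$, hence pull back to $Z$. Once the relation over $Z$ is in hand, \eqref{mainthm2-pt1} follows, and I would close the loop by remarking that $Z=\CC[\Omega]$ so ``over the center'' is literally ``over $\CC[\Omega]$''.
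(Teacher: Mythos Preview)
Your overall strategy matches the paper's: prove $\eqref{mainthm2-pt1}\Rightarrow\eqref{mainthm2-pt2}\Rightarrow\eqref{mainthm2-pt3}\Rightarrow\eqref{mainthm2-pt1}$, and for the hard implication exploit that the irreducible representations $\rho_n$ form a one-parameter family in which everything varies polynomially, so that pointwise dependence can be upgraded to dependence over $\CC(C)$ and then cleared to $\CC[C]$. However, the proposal has genuine gaps at exactly the two places you flag as obstacles, and the paper fills them with ingredients you have not supplied.

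For $\eqref{mainthm2-pt1}\Rightarrow\eqref{mainthm2-pt2}$, your ``handle the finitely many exceptional $\pi$ separately'' does not work as stated: if all $z_i(c_n)=0$ for some $n$, there is no reason for $\rho_n(p_1),\dots,\rho_n(p_k)$ to be dependent. The fix (which the paper uses) is to first divide the relation $\sum z_i p_i=0$ by $\gcd(z_1,\dots,z_k)$ in $\CC[C]$; since $\CC[C]$ is a PID, the resulting $z_i$ have no common zero, so the scalar relation is nontrivial for every $n$.

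For $\eqref{mainthm2-pt3}\Rightarrow\eqref{mainthm2-pt1}$, two concrete gaps. First, the ``square-root'' worry is a symptom of working with raw matrix entries in $n$ rather than with the structure over the center. The paper instead uses that $U(\sl_2)$ is a free module over $Z=\CC[C]$ with an explicit basis consisting of the monomials $f_i=X^{a}H^{b}$ and $Y^{a}H^{b}$ (equivalently, $X^{i_1}Y^{i_2}H^{i_3}$ with $i_1i_2=0$), so one writes $p_j=\sum_i f_i\,t_{ij}(C)$ with $t_{ij}\in\CC[C]$ from the start. This eliminates any passage from polynomials in $n$ to polynomials in $c_n$. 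Second, your test vectors are insufficient: taking $v$ to be the highest weight vector kills every $X^{a}H^{b}$ with $a>0$, so you see only half of each $p_j$, and ``running over a basis'' is not a proof. The paper's substitute is a nontrivial construction (its Proposition~\ref{irr-repr-prop}): for each degree bound $d$ and all sufficiently large $n$ there is a single vector $v_n\in\CC^{n}$, an explicit sum of standard basis vectors, such that the images $\rho_n(f_i)v_n$ of \emph{all} reduced monomials $f_i$ of degree $\le d$ are linearly independent. Granting this, dependence of $\rho_n(p_j)v_n$ is equivalent to dependence of the coefficient columns $(t_{1j}(c_n),\dots,t_{mj}(c_n))^T$, and then a Vandermonde/maximal-minor argument (the paper's Lemma~\ref{technical-lemma}) converts dependence for infinitely many $n$ into dependence over $\CC(C)$, hence over $\CC[C]$ after clearing denominators. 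Without a separating vector of this kind (or an equivalent device), your interpolation step does not go through.
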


To obtain the analogue of Theorem \ref{mainthm2} for the enveloping algebra $U(\sl_3)$, which is our third main result, proved in Section \ref{proof-of-Th3},
we consider a smaller class of irreducible representations. Namely, for each $d \in \NN$ we define $\I_d$ to be the class of all finite-dimensional
irreducible representations of $\sl_3$ with highest weights $(m_1,m_2)$ satisfying $m_1\ge d$, $m_2\ge d$. 

\begin{thm}
\label{mainthm3} 
Let $\sl_3$ be the Lie algebra of trace-zero $3 \times 3$ complex matrices. 
For any elements $p_1,\ldots,p_k \in U(\sl_3)$ the following are equivalent:
    \begin{enumerate}
        \item\label{mainthm3-pt1} There exist $z_1,\ldots,z_k$ in the center of $U(\sl_3)$ which are not all zero such that $z_1 p_1+\ldots+z_k p_k=0$. 
        \item\label{mainthm3-pt2} There exists $d \in \NN$ such that $p_1,\ldots,p_k$ are $\I_d$-locally linearly dependent.
        \item\label{mainthm3-pt3} There exists $d \in \NN$ such that $p_1,\ldots,p_k$ are $\I_d$-locally directionally linearly dependent.
    \end{enumerate}
\end{thm}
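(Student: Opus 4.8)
The plan is to prove the cycle $(1)\Rightarrow(2)\Rightarrow(3)\Rightarrow(1)$, of which the first two implications are short and the third carries essentially all the content.

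For $(1)\Rightarrow(2)$ I would first normalise the central relation. Since $U(\sl_3)$ is a domain (PBW) and $Z:=Z(U(\sl_3))$ is a polynomial ring in two variables (Harish--Chandra, Chevalley), hence a UFD, I may divide $z_1,\dots,z_k$ by their greatest common divisor and cancel it, so that $\gcd(z_1,\dots,z_k)=1$ while still $\sum_i z_ip_i=0$ with the $z_i$ not all zero. On a finite-dimensional irreducible representation $\pi$ of highest weight $\lambda=(m_1,m_2)$ the element $z_i$ acts by the scalar $\gamma(z_i)(\lambda)$, where $\gamma$ is the Harish--Chandra isomorphism onto the dot-invariants $\CC[\mathfrak{h}^*]^{W\cdot}$. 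The common zero set of $\gamma(z_1),\dots,\gamma(z_k)$ in $\mathfrak{h}^*\cong\CC^2$ is the (finite) preimage, under the quotient map $\mathfrak{h}^*\to\operatorname{Spec}Z$, of the common zero set of $z_1,\dots,z_k$, which is finite because $\gcd(z_i)=1$ in the two-dimensional ring $Z$. Choosing $d$ so that $\{(m_1,m_2):m_1,m_2\ge d\}$ misses this finite set, the identity $\sum_i\gamma(z_i)(\lambda)\,\pi(p_i)=0$ is a nontrivial linear dependence for every $\pi\in\I_d$, giving $\I_d$-LLD; and $(2)\Rightarrow(3)$ is immediate. (This is the only point where high weights are genuinely forced: for $\sl_2$ the ring $Z$ is one-dimensional, so after the gcd normalisation the relevant scalars have no common zero at all, which is why Theorem~\ref{mainthm2} can use the full class $\I$.)

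For $(3)\Rightarrow(1)$ I would work with the universal Verma module $\widehat M:=U(\sl_3)\otimes_{U(\mathfrak{b})}\CC[\mathfrak{h}^*]$, where $\mathfrak{b}=\mathfrak{h}\oplus\mathfrak{n}^+$ acts on $\CC[\mathfrak{h}^*]$ with $\mathfrak{n}^+$ trivial and $\mathfrak{h}$ by the tautological $\rho$-shifted weight. By PBW it is free over $\CC[\mathfrak{h}^*]$ with basis $\{u\,v:u\in\mathcal{B}\}$ ($\mathcal{B}$ a PBW basis of $U(\mathfrak{n}^-)$, $v$ the canonical generator), finitely generated over $\CC[\mathfrak{h}^*]$ in each weight, cyclic over $U(\sl_3)$, with $Z$ acting through $\gamma$, and faithful (it specialises to $M(\lambda)$ at each $\lambda$, and $\bigcap_\lambda\operatorname{Ann}M(\lambda)=0$). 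For $\lambda$ dominant integral with $m_1,m_2\ge d$ the maximal proper submodule of $M(\lambda)=\widehat M\otimes_{\CC[\mathfrak{h}^*]}\CC_\lambda$ is generated by singular vectors at heights $m_1+1$ and $m_2+1$ below $\lambda$, so $V(\lambda)$ agrees with $M(\lambda)$ on the window spanned by the $u\,v_\lambda$ with $\operatorname{ht}(u)\le d$; hence for such $\lambda$ and $u$ of bounded height the PBW coordinates of $\pi_\lambda(p_i)(u\,v_\lambda)$ are the specialisations at $\lambda$ of the fixed $\CC[\mathfrak{h}^*]$-coordinates of $p_iu\,v\in\widehat M$. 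Feeding the hypothesis with the test vectors $v=u\,v_\lambda$: for each $u\in\mathcal{B}$ and each dominant integral $\lambda\in\I_d$ deep enough for $u$ (infinitely many, Zariski dense in $\mathfrak{h}^*$) the vectors $\pi_\lambda(p_1)(u\,v_\lambda),\dots,\pi_\lambda(p_k)(u\,v_\lambda)$ are linearly dependent, so every $k\times k$ minor of the coordinate matrix of $(p_1u\,v,\dots,p_ku\,v)$ — a polynomial on $\mathfrak{h}^*$ — vanishes on a Zariski-dense set, hence identically. Thus over $F_0:=\CC(\mathfrak{h}^*)$ the images of $p_1,\dots,p_k$ at every element of the generic Verma module $\widehat M_{F_0}:=\widehat M\otimes_{\CC[\mathfrak{h}^*]}F_0$ are linearly dependent.

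From here I would rerun the structural part of the proof of Theorem~\ref{mainthm} (the matrix local--global principle of \cite{bk}), but over the ground field $F_0$ and for the faithful cyclic module $\widehat M_{F_0}$: the structural alternatives for a tuple of operators all of whose images are linearly dependent must, for the "simple enough" algebra acting faithfully on $\widehat M_{F_0}$, collapse to a genuine nontrivial relation $\sum_i r_i\,p_i=0$ on $\widehat M_{F_0}$ with $r_i\in F_0$. A flat base change along $\operatorname{Frac}(Z)\hookrightarrow F_0$ (using faithfulness of the corresponding central localisation of $U(\sl_3)$) then shows such a relation already has coefficients in $\operatorname{Frac}(Z)$; clearing denominators and cancelling common central factors, once more using that $U(\sl_3)$ is a domain, yields $z_1,\dots,z_k\in Z$, not all zero, with $\sum_iz_ip_i=0$. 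I expect this last structural step to be the main obstacle: pointwise linear dependence of the images does not by itself force linear dependence of the operators (there are genuinely locally linearly dependent spaces of operators), so making the matrix local--global principle bite over $F_0$, and verifying that the universal Verma module is faithful enough — including over the degree-$|W|$ cover $F_0$ of $\operatorname{Frac}(Z)$ — to force the relation to land in the centre, is where the real work lies; the bookkeeping with windows and the Zariski density of deep dominant weights, although essential, should be routine.
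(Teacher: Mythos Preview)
Your argument for $(1)\Rightarrow(2)$ is essentially the paper's argument phrased in Harish--Chandra language, and it is fine.

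For $(3)\Rightarrow(1)$, your setup with the universal Verma module is correct and elegant up to the point where you obtain: for every $w\in\widehat M_{F_0}$ the vectors $p_1w,\ldots,p_kw$ are $F_0$-linearly dependent. But the step you yourself flag as ``the main obstacle'' is a genuine gap, and your proposed fix does not close it. You suggest ``rerunning the structural part of the proof of Theorem~\ref{mainthm}'' or invoking the local--global principle of \cite{bk}, but neither applies: Theorem~\ref{mainthm} is proved by building a specific \emph{reducible} finite-dimensional representation with a separating vector, and \cite{bk} treats free algebras. On the generic Verma module there is \emph{no} single separating vector $w$ (i.e.\ no $w$ with $a\mapsto aw$ injective on $A_{F_0}:=U(\sl_3)\otimes_Z F_0$), because $\widehat M_{F_0}$ is simple while $A_{F_0}$ has many proper left ideals; so pointwise $F_0$-dependence does not formally yield a global $F_0$-relation. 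You would need an additional structural fact about $A_{F_0}$ acting on $\widehat M_{F_0}$ that rules out genuinely locally-linearly-dependent but independent families, and you have not supplied one.

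The paper closes exactly this gap by brute force rather than abstract nonsense, and this is the technical heart of Section~\ref{proof-of-Th3}. First, Proposition~\ref{explicitbasis} exhibits an explicit free basis $\{f_i\}$ of $U(\sl_3)$ over its center $Z=\CC[Z_2,Z_3]$ (the PBW monomials with $j_2\ell_2=0$ and $r_2\le 2$). Second, Proposition~\ref{propklm} constructs, for each $d$ and all sufficiently large highest weights $(m_1,m_2)$, a concrete vector $v$ in the irreducible $\pi_{m_1,m_2}$ such that the vectors $\pi_{m_1,m_2}(f_i)v$, for all basis monomials $f_i$ of degree $\le d$, are $\CC$-linearly independent. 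Writing $p_j=\sum_i f_i\,t_{ij}$ with $t_{ij}\in Z$, the LDLD hypothesis applied to this single vector $v$ then forces the coefficient columns $(t_{ij}(d_2,d_3))_i$ to be $\CC$-linearly dependent for all large $(m_1,m_2)$, and a Zariski-density lemma (Lemma~\ref{technical-lemma2}) upgrades this to linear dependence over $Z$. In other words, the paper manufactures a separating vector for the finitely many $f_i$ that actually occur---precisely the ingredient your Verma-module framework lacks---and the construction of that vector (Proposition~\ref{propklm}) is long and delicate. Your descent step from $F_0$ to $\operatorname{Frac}(Z)$, by contrast, is correct and automatic once a global relation over $F_0$ is in hand, since linear dependence is reflected under field extension.
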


Here is a list of a few results related to Theorem \ref{mainthm2} and \ref{mainthm3} that are either known or trivial:
    \begin{enumerate} 
        \item\label{rem-pt1} If $A=M_n(\CC)$ and $\C=\{\id\}$ then $\C$-LLD is equivalent to linear dependence but $\C$-LDLD is not
           as it is equivalent to the usual notion of locally linearly dependent matrices; see \cite{bs}.
           For $n \ge 2$ the coordinate matrices $E_{ij} \in M_n(\CC)$ are $\C$-LDLD although they are linearly independent.
           
        \item If $A=M_n(\CC[X_1,\ldots,X_m])$ and $\C=\{\ev_a \mid a \in \CC^m\}$ is the set of all evaluations at $n$-tuplets of complex numbers, then
		 $\C$-LLD is equivalent to linear dependence over $\CC[X_1,\ldots,X_n]$ (see below), but $\C$-LDLD is not (it suffices to consider constant matrices: see
		 \eqref{rem-pt1} above). 
       	  
       	  Pick any matrices $P_1,\ldots,P_k \in A$ and consider the matrix $P=[\mathbf{p}_1,\ldots,\mathbf{p}_k]$
       	   where $\mathbf{p}_i$ is the vectorization of $P_i$. 
       	   Note that $P_1,\ldots,P_k$ are $\C$-LLD iff for every $a \in \CC^n$ every maximal subdeterminant of $P(a)$ is zero iff every maximal subdeterminant of $P$ is zero
       	   iff $P_1,\ldots,P_k$ are linearly dependent over $\CC(X_1,\ldots,X_n)$.
       	          	   
       	 \item If $A=A_n(\CC)$ is the $n$-th Weyl algebra and $\mathcal C=\{\pi_0\},$ where $\pi_0$ is the Schr\" odinger representation of $A$,
			then $\C$-LLD and $\C$-LDLD are equivalent to linear dependence; see \cite{c}.
			 Recall that $A_n(\CC)$ has generators $x_1,\ldots,x_n,y_1,\ldots,y_n$ and relations
			$y_i x_j-x_j y_i=\delta_{ij}$, $x_ix_j=x_jx_i$ and $y_iy_j=y_jy_i$ for $i,j=1,\ldots,n$ and that $\pi_0$ acts on the vector space $\mathcal S(\RR^n)$ of rapidly decreasing 
			$C^{\infty}$-functions $f \colon \RR^n\to \CC$ by
			$(x_j f)(t)=t_j f(t)$ and $(y_i f)(t)=\frac{\partial f}{\partial t_i} (t)$, where $f\in \mathcal S(\RR^n)$ and 
			$t:=(t_1,\ldots,t_n)\in\RR^n$. Note that the center of $A$ is $\CC$; see \cite[Example 2.5.2]{sch}.
			   
\end{enumerate}

Recall from linear algebra that the span of elements $p_1,\ldots,p_k$ in a complex vector space is the set
$\span\{p_1,\ldots,p_k\}$ of all complex linear combinations of $p_1,\ldots,p_k$.
For an algebra $A$ and a class $\C$ of representations of $A$, two more notions of span of elements
$p_1,\ldots,p_k \in A$ will be used throughout the paper:\\

\noindent the $\C$-\textit{local linear span} of $p_1,\ldots,p_k$, denoted by $\loc_\C\{p_1,\ldots,p_k\}$, is the set of all $q\in A$ such that
\begin{equation}\label{loceq}\tag{A}
	\pi(q) \in \span\{\pi(p_1),\ldots,\pi(p_k)\} \quad\text{for all }\pi \in \C;\\\vspace{0.1cm} 
\end{equation}
and the $\C$-\textit{reflexive closure} of $p_1,\ldots,p_k$, denoted by $\rf_\C\{p_1,\ldots,p_k\}$, is the set of all $q\in A$ with
\begin{equation}\label{rfeq}\tag{B}
   \pi(q)v \in \span\{\pi(p_1)v,\ldots,\pi(p_k)v\}, \text{ for every } \pi:A\to \endo(V_\pi) \text{ in } \C \text{ and } v \in V_\pi.\\\vspace{0.1cm} 
\end{equation}

\noindent Clearly, $\span\{p_1,\ldots,p_k\} \subseteq \loc_\C\{p_1,\ldots,p_k\} \subseteq \rf_\C\{p_1,\ldots,p_k\}$, and Theorem \ref{mainthm} implies 
that $\span\{p_1,\ldots,p_k\} = \loc_\R \{p_1,\ldots,p_k\} = \rf_\R\{p_1,\ldots,p_k\}$ for every $p_1,\ldots,p_k\in U(L)$.

\noindent On the other hand, we do not have a similar result in $U(\sl_2)$ for $\rf_\I$ and $\loc_\I$. We will provide several counterexamples in Section \ref{sec-ref} (see Theorem \ref{I-ref-cl-sl2}).
For finite-dimensional complex solvable Lie algebras we will give explicit descriptions of $\rf_\I$ and $\loc_\I$ in Section \ref{sec-solv}.

Our motivation for studying $\loc$ and $\rf$ comes from their relation to nullstellensatz.
Namely, assume that the class $\C$ contains only finite-dimensional representations.
Then the properties \eqref{loceq} and \eqref{rfeq} are respectively equivalent to the properties (A') and (B') below: \\

\noindent (A')\; For every $\pi:A\to\endo(V_\pi)$ in $\C$ and a matrix $B \in \endo(V_\pi)$,  
\begin{equation*}\label{locnsatz}
	\tr(\pi(p_1)B)=\ldots=\tr(\pi(p_k)B)=0 \text{ implies } \tr (\pi(q)B)=0.
\end{equation*}


\noindent (B')\; For every $\pi \in \C$, $v \in V_\pi$ and $w \in V_\pi^\ast$,
\begin{equation*}\label{refnsatz}
	\langle \pi(p_1)v,w \rangle=\ldots=\langle \pi(p_k)v,w \rangle=0 \text{ implies } \langle \pi(q)v,w \rangle=0.
\end{equation*}
Here $V_\pi^\ast$ stands for the dual of $V_\pi$ and $\langle u,w \rangle=w(u)$. 
These equivalences are pretty easy to prove: the proof of the equivalence of (A) and (A')
uses the fact that the span of $\pi(p_1),\ldots,\pi(p_k)$ is equal to its second orthogonal complement in $\endo(V_\pi)$ with inner product defined by the trace map;
the proof of the equivalence of (B) and (B') is based on the span of $\pi(p_1)v,\ldots,\pi(p_k)v$ being equal to its second annihilator in $V_\pi$.\\

\noindent \textbf{Acknowledgement.} The authors would like to thank anonymous referee for a very detailed report and numerous suggestions which improved the exposition of the
manuscript.

\section{Proof of Theorem \ref{mainthm}}
\label{sec-proof-Theo1}

Let $\sl_n$ denote the Lie algebra of all complex $n \times n$ matrices with zero trace. 
A theorem of Ado, see \cite[2.5.6]{dix}, implies that for every finite-dimensional complex Lie algebra $L$ there exists an embedding $\iota \colon L\to \sl_n$ for some $n$. 

Let $U(L)$ be the universal enveloping algebra of $L$.
By the PBW theorem \cite[\S 17.3]{hum} $\iota$ induces an embedding of $U(L)$ into $U(\sl_n)$.
If $f_1,\ldots,f_{n^2-1}$ is a basis of $\sl_n$, then the monomials 
$f_1^{m_1}\cdots f_{n^2-1}^{m_{n^2-1}}, m_j\in \NN_0$, form a basis of $U(\sl_n)$.

We write $\R$ for the class of all finite-dimensional representations of $L$.
Proposition \ref{solv2simp} below reduces Theorem \ref{mainthm}
to a special linearly independent set in $\sl_n$.

\begin{proposition} 
\label{solv2simp}
The following statements are equivalent:
\begin{enumerate} 
\item\label{solv2simp-p1} 
	For every finite-dimensional Lie algebra $L$ over $\CC$ we have that every finite $\R$-locally directionally linearly dependent subset of $U(L)$ is linearly dependent.
\item\label{solv2simp-p2} 
	For every finite-dimensional Lie algebra $L$ over $\CC$ and every linearly independent set $p_1,\ldots,p_k \in U(L)$ 
	there exists $\pi:U(L)\to \endo(V_\pi)$ in $\R$ and a vector $v \in V_\pi$ such that $\pi(p_1)v,\ldots,\pi(p_k)v$ are linearly independent.
\item\label{solv2simp-p3} 
	For every $n,d \in \NN$ there exists a finite-dimensional representation $\pi_{n,d} \colon U(\sl_n)\to \endo(V_{\pi_{n,d}})$ and a vector $v_{n,d} \in V_{\pi_{n,d}}$ such that all vectors
	of the form 	
		$$\pi_{n,d}(f_1)^{m_{1}} \cdots \pi_{n,d}(f_{n^2-1})^{m_{n^2-1}}  v_{n,d}$$ 
	where $f_1,\ldots,f_{n^2-1}$ is a basis for $\sl_n$  and $\sum_{i=1}^{n^2-1} m_{i} \leq d$,
	are linearly independent.
\end{enumerate}
\end{proposition}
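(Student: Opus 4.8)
The plan is to prove $\eqref{solv2simp-p1}\Leftrightarrow\eqref{solv2simp-p2}$ by simply negating the definitions, then $\eqref{solv2simp-p2}\Rightarrow\eqref{solv2simp-p3}$ by instantiating at $L=\sl_n$, and finally the substantive implication $\eqref{solv2simp-p3}\Rightarrow\eqref{solv2simp-p2}$ by using Ado's theorem to reduce an arbitrary $L$ to $\sl_n$. For $\eqref{solv2simp-p1}\Leftrightarrow\eqref{solv2simp-p2}$ I would only unwind the definitions: a finite subset $\{p_1,\dots,p_k\}\subseteq U(L)$ fails to be $\R$-LDLD exactly when there is some $\pi\in\R$ and $v\in V_\pi$ with $\pi(p_1)v,\dots,\pi(p_k)v$ linearly independent, so the contrapositive of \eqref{solv2simp-p1} says precisely that every linearly independent finite tuple in $U(L)$ admits such a $\pi$ and $v$ — which is \eqref{solv2simp-p2} (linearly dependent tuples satisfy the conclusion of \eqref{solv2simp-p1} trivially, so only linearly independent tuples carry content). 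The ``for every finite-dimensional complex Lie algebra $L$'' quantifiers match up on both sides, so nothing else is needed here.

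For $\eqref{solv2simp-p2}\Rightarrow\eqref{solv2simp-p3}$: fix $n,d\in\NN$ and apply \eqref{solv2simp-p2} with $L=\sl_n$ to the tuple of PBW monomials $f^{\vec m}:=f_1^{m_1}\cdots f_{n^2-1}^{m_{n^2-1}}$ ranging over multi-indices with $\sum_i m_i\le d$; these are pairwise distinct and linearly independent in $U(\sl_n)$ by the PBW theorem \cite[\S 17.3]{hum}. This produces a finite-dimensional representation $\pi_{n,d}$ and a vector $v_{n,d}$ making the vectors $\pi_{n,d}(f^{\vec m})v_{n,d}$ linearly independent, and since $\pi_{n,d}$ is an algebra homomorphism we have $\pi_{n,d}(f^{\vec m})=\pi_{n,d}(f_1)^{m_1}\cdots\pi_{n,d}(f_{n^2-1})^{m_{n^2-1}}$, which is exactly the assertion of \eqref{solv2simp-p3}.

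For $\eqref{solv2simp-p3}\Rightarrow\eqref{solv2simp-p2}$, the main step: given a finite-dimensional complex Lie algebra $L$ and linearly independent $p_1,\dots,p_k\in U(L)$, I would invoke Ado's theorem \cite[2.5.6]{dix} to fix an embedding $\iota\colon L\hookrightarrow\sl_n$, which by the PBW theorem induces an injective, filtration-preserving algebra embedding $U(L)\hookrightarrow U(\sl_n)$; under this embedding I regard $p_1,\dots,p_k$ as linearly independent elements of $U(\sl_n)$. Since each lies in a product of finitely many elements of $L$, there is a $d$ with $p_1,\dots,p_k$ all in the span $U(\sl_n)_{\le d}$ of the PBW monomials $f^{\vec m}$ with $\sum_i m_i\le d$. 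Take $\pi_{n,d}$ and $v_{n,d}$ as in \eqref{solv2simp-p3}. Composing $\pi_{n,d}$ with $U(L)\hookrightarrow U(\sl_n)$ yields a finite-dimensional representation of $U(L)$, hence a member of $\R$. The linear map $U(\sl_n)_{\le d}\to V_{\pi_{n,d}}$ sending $u\mapsto\pi_{n,d}(u)v_{n,d}$ carries the basis $\{f^{\vec m}\}$ of $U(\sl_n)_{\le d}$ to a linearly independent family by \eqref{solv2simp-p3}, so it is injective; applying it to the linearly independent $p_1,\dots,p_k$ shows $\pi_{n,d}(p_1)v_{n,d},\dots,\pi_{n,d}(p_k)v_{n,d}$ are linearly independent, giving \eqref{solv2simp-p2}.

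The only external inputs are Ado's theorem and the fact that a Lie-algebra embedding induces a filtered embedding of enveloping algebras; the rest is elementary linear algebra, and I do not expect any real obstacle inside this proposition. The genuinely hard part of the whole strategy — actually constructing the representations $\pi_{n,d}$ and vectors $v_{n,d}$ demanded by \eqref{solv2simp-p3} — is precisely what this reduction isolates, and is to be carried out separately.
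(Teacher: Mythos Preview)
Your proposal is correct and follows essentially the same route as the paper: $(1)\Leftrightarrow(2)$ by contraposition, $(2)\Rightarrow(3)$ by specializing to $L=\sl_n$ and the PBW monomials, and $(3)\Rightarrow(2)$ via Ado's embedding $U(L)\hookrightarrow U(\sl_n)$ and the observation that $u\mapsto\pi_{n,d}(u)v_{n,d}$ is injective on the degree-$\le d$ piece. The paper spells out the last step by extending $p_1,\dots,p_k$ to a basis and writing down the change-of-basis matrix, whereas you phrase it more directly as injectivity of a linear map sending a basis to an independent set; the content is identical.
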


\begin{proof}
Clearly, \eqref{solv2simp-p1} is equivalent to \eqref{solv2simp-p2} and \eqref{solv2simp-p3} is a special case of \eqref{solv2simp-p2}. 

It remains to prove the implication \eqref{solv2simp-p3} $\Rightarrow$ \eqref{solv2simp-p2}. Let $L$ be a finite-dimensional complex Lie algebra
and let $p_1,\ldots,p_k \in U(L)$ be linearly independent. We first identify $U(L)$ with a subalgebra of $U(\sl_n)$ for some $n$. Then using the basis
$f_1,\ldots,f_{n^2-1}$ of $U(\sl_n)$, for each $\ell \in \NN$, let $W_\ell$ denote the subspace of $U(\sl_n)$ 
spanned by all elements $\prod_{i=1}^{n^2-1} f_i^{m_i}$ where $\sum_{i=1}^{n^2-1} m_i\leq \ell$.

Let $d\in \NN$ be such that $p_1,\ldots,p_k\in W_d$. Choose $p_{k+1},\ldots,p_N \in W_d$, so that $p_1,\ldots,p_N$  is a basis of $W_d$. 
Let $q_1,\ldots,q_N$ be another basis of $W_d$ consisting of all monomials $\prod_{i=1}^{n^2-1} f_i^{m_i}$ with $\sum_{i=1}^{n^2-1} m_i\leq d$.
By assumption \eqref{solv2simp-p3} there exists a representation  $\pi_{n,d} \colon U(\sl_n)\to \endo(V_{\pi_{n,d}})$ and a vector $v_{n,d} \in V_{\pi_{n,d}}$ 
such that the vectors $\pi_{n,d}(q_1)v_{n,d},\ldots,\pi_{n,d}(q_N)v_{n,d}$ are linearly independent. 
Claim \eqref{solv2simp-p2} will follow from the linear independence of $$\pi_{n,d}(p_1)v_{n,d},\ldots,\pi_{n,d}(p_N)v_{n,d}$$ which we now show.
There are $\gamma_{ij} \in \CC$ such that $p_i=\sum_{j=1}^N \gamma_{ij} q_j$ for $i=1,\ldots,N$. 
Assume that $\sum_{i=1}^N \alpha_i \pi_{n,d}(p_i)v_{n,d}=0$ for some $\alpha_i \in \CC$. Then
$\sum_{j=1}^N \beta_j \pi_{n,d}(q_j)v_{n,d}=0$ where $\beta_j=\sum_{i=1}^N \alpha_i \gamma_{ij}$
for every $j=1,\ldots,N$. Since $\pi_{n,d}(q_j)v_{n,d}$ are linearly independent
it follows that $\beta_j=0$ for $j=1,\ldots,N$. Since the matrix $[\gamma_{ij}]_{i,j}$ is invertible, it follows that
$\alpha_i=0$ for $i=1,\ldots,N$.
\end{proof}


Let $\rho_n \colon \sl_n \to\endo(\CC^n)$ be the standard representation of $\sl_n$ defined  by $\rho_n(X)u:=Xu$ for every $X \in \sl_n$ and $u\in \CC^n$.
Its unique extension to $U(\sl_n)$ will be denoted by the same symbol. Let $\pi_n=\bigoplus_{i=1}^n \rho_n$ be the direct sum of $n$ copies of $\rho_n$
and let $v=\bigoplus_{i=1}^n e_i$, where $e_1,\ldots,e_n$ is the standard basis of $\CC^n$. Note that $v$ belongs to $V:=\oplus_{i=1}^n \CC^n=\CC^{n^2}$ 
and that $\pi_n$ maps into $\endo(V)$. Let $f_1,\ldots,f_{n^2-1}$ be a basis of $\sl_n$.
The following is clear:

\begin{lemma}\label{lem1t1}
With the above notation, the vectors $v$, $\pi_n(f_1)v,\ldots,\pi_n(f_{n^2-1})v$ are linearly independent.
\end{lemma}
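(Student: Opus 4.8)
The plan is to exhibit the vectors $v, \pi_n(f_1)v, \ldots, \pi_n(f_{n^2-1})v$ explicitly and observe that they are, up to reindexing, exactly the columns of the $n^2 \times n^2$ matrix $\begin{bmatrix} I_n & f_1 & \cdots & f_{n^2-1}\end{bmatrix}$ stacked appropriately, and that this matrix has full rank. Concretely, by definition $v = \bigoplus_{i=1}^n e_i$, so $v$ is the vectorization of the identity matrix $I_n$. For a fixed $f_r \in \sl_n$, the action $\pi_n(f_r) = \bigoplus_{i=1}^n \rho_n(f_r)$ sends $v = \bigoplus_i e_i$ to $\bigoplus_i f_r e_i$; since $f_r e_i$ is the $i$-th column of the matrix $f_r$, the vector $\pi_n(f_r)v$ is precisely the vectorization $\mathbf{f}_r$ of the matrix $f_r$. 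Thus the claim reduces to: the vectorizations of $I_n, f_1, \ldots, f_{n^2-1}$ are linearly independent in $\CC^{n^2}$.

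The key step is then purely linear-algebraic. Vectorization $M \mapsto \mathbf{M}$ is a linear isomorphism $M_n(\CC) \to \CC^{n^2}$, so linear independence of the vectorizations is equivalent to linear independence of the matrices $I_n, f_1, \ldots, f_{n^2-1}$ in $M_n(\CC)$. Since $f_1, \ldots, f_{n^2-1}$ is a basis of $\sl_n$, which is the trace-zero hyperplane in $M_n(\CC)$, and $I_n \notin \sl_n$ (as $\tr(I_n) = n \neq 0$), adjoining $I_n$ to a basis of $\sl_n$ yields $n^2$ linearly independent elements spanning all of $M_n(\CC)$. This finishes the argument.

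I do not expect any serious obstacle here — the lemma is flagged as "clear" in the text, and indeed the only thing to verify is the bookkeeping identification of $\pi_n(f_r)v$ with a vectorization, together with the elementary observation $\tr(I_n) \neq 0$. The one point worth stating carefully is that the direct-sum construction $\pi_n = \bigoplus_{i=1}^n \rho_n$ applied to $v = \bigoplus_{i=1}^n e_i$ is exactly what makes the columns of each matrix $f_r$ appear simultaneously, which is why $n$ copies (rather than one) are needed — this is the same device used in the $M_n(\CC)$ remark earlier in the introduction.
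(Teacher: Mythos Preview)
Your argument is correct and is precisely the intended one: the paper gives no proof beyond declaring the lemma ``clear,'' but the vectorization device you use is exactly what the introduction spells out in the $M_n(\CC)$ remark, and your identification of $\pi_n(f_r)v$ with the vectorization of $f_r$ together with $\tr(I_n)\neq 0$ is the whole content.
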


For every  $k \in \NN$ let $V^{\otimes k}$ be the $k$-th tensor power of $V$ and let $\sym^k(V)$
be the $k$-th symmetric power of $V$. Recall that $\sym^k(V)$ is the subset of $V^{\otimes k}$ consisting of all elements
that are invariant under the natural action of the symmetric group $S_k$ on $V^{\otimes k}$.
We define a representation $\sym^k(\pi_n) \colon \sl_n \to \endo(\sym^k (V))$ by
$$\sym^k(\pi_n)(x):= \sum_{i=0}^{k-1} I^{\otimes i} \otimes \pi_n(x) \otimes I^{\otimes (k-i-1)}$$
where $I\in \endo(V)$ is the identity. Its extension to $U(\sl_n)$ is unique and it will be denoted by the same symbol.

\begin{lemma}\label{same-degree-lin-ind}
Let $\pi_n \colon \sl_n \to \endo(V)$, $v \in V$ and $f_1,\ldots,f_{n^2-1} \in \sl_n$ be as in Lemma \ref{lem1t1}.
	Let $F_k$ be the subspace of $\Sym^k (V)$ generated by all elements of the form
		\begin{equation*}
	\sum_{\sigma\in S_k} u_{\sigma(1)}\otimes u_{\sigma(2)}\otimes\cdots\otimes u_{\sigma(k)}, 	\text{ where } u_1,\ldots,u_k \in V \text{ and } u_1=v. 
		\end{equation*}
	Then the vectors
		$$\sym^k(\pi_n)(f_{i_1}\ldots f_{i_k})  v^{\otimes k}, \text{ where } 1\leq i_1\leq i_2\leq \ldots\leq i_k\leq n^2-1,$$ are linearly independent in $\sym^k (V)/F_k$.
\end{lemma}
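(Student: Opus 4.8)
The plan is to turn the statement into a concrete claim about monomial bases of symmetric powers. By Lemma~\ref{lem1t1} the $n^2$ vectors $v,\pi_n(f_1)v,\dots,\pi_n(f_{n^2-1})v$ are linearly independent, and since $\dim V=n^2$ they form a basis of $V$; write $b_0:=v$ and $b_i:=\pi_n(f_i)v$ for $1\le i\le n^2-1$, and set $U:=\span\{b_1,\dots,b_{n^2-1}\}$, so $V=\CC v\oplus U$. Then $\sym^k(V)$ has the monomial basis given by the symmetric products $b_{j_1}\cdots b_{j_k}$ with $0\le j_1\le\cdots\le j_k\le n^2-1$. First I would check that $F_k$ is precisely the span of those basis monomials containing at least one factor $b_0=v$: expanding each $u_i$ (for $i\ge 2$) of a generator $\sum_{\sigma\in S_k}u_{\sigma(1)}\otimes\cdots\otimes u_{\sigma(k)}$ (with $u_1=v$) in the basis shows that $F_k$ is spanned by the symmetrizations of $v\otimes b_{j_2}\otimes\cdots\otimes b_{j_k}$, which are exactly those monomials. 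Equivalently, $F_k=\ker\!\big(\sym^k(V)\twoheadrightarrow\sym^k(V/\CC v)\big)$. Hence the images of the monomials $b_{i_1}\cdots b_{i_k}$ with $1\le i_1\le\cdots\le i_k\le n^2-1$ form a basis of $\sym^k(V)/F_k$.

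The heart of the argument is to compute, modulo $F_k$, the vector $\sym^k(\pi_n)(f_{i_1}\cdots f_{i_k})v^{\otimes k}=D_{f_{i_1}}\cdots D_{f_{i_k}}v^{\otimes k}$, where $D_x:=\sym^k(\pi_n)(x)=\sum_{s=1}^{k}I^{\otimes(s-1)}\otimes\pi_n(x)\otimes I^{\otimes(k-s)}$. Expanding every $D_{f_{i_j}}$ according to the slot $s_j\in\{1,\dots,k\}$ on which $\pi_n(f_{i_j})$ acts writes this vector as a sum of pure tensors indexed by all maps $j\mapsto s_j$; the entry in slot $s$ is $\pi_n(f_{i_{j_1}})\cdots\pi_n(f_{i_{j_r}})v$, where $j_1<\cdots<j_r$ are the indices sent to $s$ (and the entry is $v$ if there are none). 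I would split the sum into the \emph{leading} terms, where $j\mapsto s_j$ is a bijection, and the rest. The leading part is $\sum_{\sigma\in S_k}\pi_n(f_{i_{\sigma(1)}})v\otimes\cdots\otimes\pi_n(f_{i_{\sigma(k)}})v$, i.e.\ the symmetrization of $b_{i_1}\otimes\cdots\otimes b_{i_k}$, which is a positive-integer multiple (the product of the factorials of the multiplicities of the values among $i_1,\dots,i_k$) of the monomial $b_{i_1}\cdots b_{i_k}$; since all $i_j\ge 1$, this is a nonzero multiple of a basis vector of $\sym^k(V)/F_k$. In every non-leading term the assignment misses some slot, which therefore still carries $v$, so the sum of all non-leading terms lies in $W_v:=\sum_{s=1}^{k}V^{\otimes(s-1)}\otimes\CC v\otimes V^{\otimes(k-s)}$; being also symmetric (it is the difference of two symmetric tensors), it lies in $\sym^k(V)\cap W_v$, and one checks this intersection equals $F_k$ — indeed $V^{\otimes k}=W_v\oplus U^{\otimes k}$, so the $S_k$-equivariant projection $\sym^k(V)\to U^{\otimes k}$ has image $\sym^k(U)$ and kernel exactly the span of monomials containing $b_0$, namely $F_k$.

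Putting the pieces together, modulo $F_k$ the vector $\sym^k(\pi_n)(f_{i_1}\cdots f_{i_k})v^{\otimes k}$ is a nonzero scalar times the basis vector of $\sym^k(V)/F_k$ attached to the non-decreasing multi-index $(i_1,\dots,i_k)$, and distinct such multi-indices give distinct basis vectors, so these vectors are linearly independent. I expect the only subtle step to be the bookkeeping that \emph{all} non-leading terms taken together lie in $F_k$; this is exactly where one needs the identification $\sym^k(V)\cap W_v=F_k$, and hence the freedom to choose $U$ to be a complement of $\CC v$ containing every $\pi_n(f_i)v$. The ordering convention in $U(\sl_n)$ (the rightmost factor of $f_{i_1}\cdots f_{i_k}$ acting first) affects only which composition of the $\pi_n(f_{i_j})$ occurs in a multiply-hit slot and plays no role in the leading term.
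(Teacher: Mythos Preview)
Your proof is correct and follows essentially the same approach as the paper: both compute $\sym^k(\pi_n)(f_{i_1}\cdots f_{i_k})v^{\otimes k}$ modulo $F_k$ as the symmetrization $\sum_{\sigma\in S_k} b_{i_{\sigma(1)}}\otimes\cdots\otimes b_{i_{\sigma(k)}}$ and then observe that these symmetrizations are linearly independent in the quotient. Your write-up is more explicit than the paper's---you justify the identification $F_k=\sym^k(V)\cap W_v$ via the splitting $V=\CC v\oplus U$ and the resulting isomorphism $\sym^k(V)/F_k\cong\sym^k(U)$, whereas the paper simply asserts the corresponding facts---but the underlying argument is the same.
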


\begin{proof} 
	By Lemma  \ref{lem1t1}, the vectors $v_i:=\pi_{n}(f_i)v$, $1\leq i \leq n^2-1$, and $v_0=v$ are linearly independent.
	We have that
	\begin{equation}\label{tensor-calc}
		\sym^k(\pi_n)(f_{i_1}\cdots f_{i_k})v^{\otimes k} - 
		\sum_{\sigma\in S_k}v_{i_{\sigma(1)}}\otimes v_{i_{\sigma(2)}}\otimes\cdots\otimes v_{i_{\sigma(k)}} \in  F_k.
	\end{equation}	
	Note that the projection of the set 
	\begin{equation}\label{lin-ind-set}
		\Big\{\sum_{\sigma\in S_k} v_{i_{\sigma(1)}}\otimes v_{i_{\sigma(2)}} \otimes\cdots\otimes v_{i_{\sigma(k)}}  \colon
			 1\leq i_1\leq i_2\leq\ldots\leq i_k\leq n^2-1 \Big\}
	\end{equation}
	into the vector space $\Sym^k (V)/F_k$ is linearly independent.
	By \eqref{tensor-calc} and \eqref{lin-ind-set} the conclusion of the lemma follows.
\end{proof}

\begin{proof}[Proof of Theorem \ref{mainthm}] 
	The implications $\eqref{mainthm-pt1} \Rightarrow \eqref{mainthm-pt2} \Rightarrow \eqref{mainthm-pt3}$ are trivial.

	It remains to prove the implication $\eqref{mainthm-pt3} \Rightarrow \eqref{mainthm-pt1}$.
	It suffices to prove	statement \eqref{solv2simp-p3} of Proposition \ref{solv2simp}. 
	Fix $n,d\in \NN$. With the notation from Lemma \ref{same-degree-lin-ind} we define a representation 
	$\pi_{n,d}:=\bigoplus_{k=1}^d \sym^k(\pi_n)$ and a vector $v_{n,d}:=\bigoplus_{k=1}^d v^{\otimes k}$. 
	
  To prove that the vectors
		\begin{equation}\label{lin-ind-pred}
			\pi_{n,d}(f_1)^{m_{1}} \cdots \pi_{n,d}(f_{n^2-1})^{m_{n^2-1}}  v_{n,d}
			\quad\text{where}\quad 
			m_1+\ldots+m_{n^2-1}\le d
		\end{equation}
	are linearly independent we assume that
		\begin{equation}\label{lin-ind-eq}
			\sum_{\sum_{i=1}^{n^2-1}m_i\leq d} \lambda_{m_1,\ldots,m_{n^2-1}}\pi_{n,d}(f_1)^{m_{1}} \cdots \pi_{n,d}(f_{n^2-1})^{m_{n^2-1}}  v_{n,d}=0.
		\end{equation}
  Project this onto
	$$\bigoplus_{k=1}^d \sym^k(V) /  \Big(\bigoplus_{k=1}^{d-1} \sym^k(V) \oplus F_d\Big) \cong \sym^d(V)/F_d$$ 
  to conclude, by Lemma \ref{same-degree-lin-ind},	that  $\lambda_{m_1,\ldots,m_{n^2-1}}=0$ whenever $\sum_{i=1}^{n^2-1}m_i=d$. 
  Repeating this argument for $d-1, d-2, \ldots$ in place of $d$, we prove that 
  $\lambda_{m_1,\ldots,m_{n^2-1}}=0$ for all $\sum_{i=1}^{n^2-1} m_i\leq d$.
\end{proof}

\section{Proof of Theorem \ref{mainthm2}}
\label{proof-of-sec2}

\subsection{Irreducible representations of $\sl_2$}
\label{sec-irr-sl2}

The main result of this subsection, Proposition \ref{irr-repr-prop}, describes an irreducible representation of the Lie algebra $\sl_2$ of $2\times 2$ complex traceless matrices and a vector $v$ making 
monomials of the form \eqref{vec-lin-ind-2} linearly independent. This result will be needed in the proof of Theorem \ref{mainthm2}, given in Subsection \ref{sec-I-LDLD-sl2} below.

Let $e_1,\ldots,e_k$ be the standard basis of $\CC^k$, let $E_{ij}$, $1\leq i,j\leq 2$, be the standard basis of $M_2(\CC)$ and let
$X:=E_{12}$, $Y:=E_{21}$ and $H:=E_{11}-E_{22}$ be the standard basis of $\sl_2$.
Recall \cite[\S 1.8]{dix} that for every $k\in \NN$ there is a unique (up to equivalence) irreducible representation $\rho_k:\sl_2\to \endo(\CC^k)$
defined by 
	$$\rho_k(X)e_i=x_{k,i-1}e_{i-1},\quad \rho_k(Y)e_i=y_{k,i}e_{i+1}, \quad \rho_k(H)e_i=h_{k,i}e_{i},$$
where
	\begin{eqnarray*}	
		x_{k,i}	&:=&	\left\{\begin{array}{rr}
						k-i,		&	\text{if}\quad i=1,\ldots,k-1	\\
						0,		&	\text{otherwise}\\
					\end{array} \right.,\\
		y_{k,i}	&:=& \left\{\begin{array}{rr}
						i,		&	\text{if}\quad i=1,\ldots,k-1	\\
						0,		&	\text{otherwise}	
					\end{array} \right.,\\
		h_{k,i}	&:=& (k+1-2i),		\quad\text{for}\quad i=1,\ldots,k.
	\end{eqnarray*}

\noindent We denote by $0_\ell$ a sequence of $\ell$ zeroes.

\begin{proposition}
	\label{irr-repr-prop}
	Assume the notation as above.
	For every $t \in \NN\cup\{0\}$ and a vector $v\in \CC^{(d+1)^2+t}$, $d\in \NN$, of the form 
		$$v=[0_d,1,0_d,0_{d-1},1,0_{d-1},\ldots,0_2,1,0_2,0_1,1,0_1,1,0_t]^T$$
	all vectors of the form 
	\begin{equation}\label{vec-lin-ind-2}
		\rho_{(d+1)^2+t}(X)^{m_{1}}\rho_{(d+1)^2+t}(Y)^{m_{2}}\rho_{(d+1)^2+t}(H)^{m_{3}}v
	\end{equation} 
	with $m_1,m_2,m_3\in \NN_0$, $0\leq m_{1}+m_{2}+m_3\le d$ and $m_1 m_2=0$ are linearly independent.
\end{proposition}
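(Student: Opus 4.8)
The plan is to work directly with the explicit formulas for $\rho_N(X),\rho_N(Y),\rho_N(H)$, where $N=(d+1)^2+t$, and to exploit the block structure of the chosen vector $v$. Write $v=\sum_{j=1}^{d} e_{a_j} + e_{b}$, where the indices $a_j$ are the positions of the $1$'s in the $d+1$ blocks $[0_{d+1-j},1,0_{d+1-j}]$ (for $j=1,\ldots,d$, these have block length $2(d+1-j)+1$) and $b$ is the position of the final isolated $1$ before the $0_t$ tail. The crucial observation is that the $d+1$ ``bumps'' of $v$ sit far apart, so that applying at most $d$ of the raising/lowering/diagonal operators to $v$ keeps the images of different bumps in disjoint coordinate ranges; hence it suffices to prove linear independence bump-by-bump, for a single basis vector $e_a$ sitting in the $j$-th block. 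Since $H$ acts diagonally, $\rho_N(H)^{m_3} e_a = h_{N,a}^{m_3} e_a$, and because $\rho_N(X)$ (resp. $\rho_N(Y)$) shifts the index down (resp. up) by one, the vector $\rho_N(X)^{m_1}\rho_N(Y)^{m_2}\rho_N(H)^{m_3} e_a$ is a scalar multiple of $e_{a+m_2-m_1}$ — here the constraint $m_1 m_2=0$ matters, so that only one of $X,Y$ actually acts and there is no cancellation.

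Next I would separate the monomials by their net shift $s:=m_2-m_1\in\{-d,\ldots,d\}$: vectors with different $s$ land on different basis vectors $e_{a+s}$ and so are automatically independent; it remains to handle, for each fixed shift $s$, the family indexed by the remaining free parameter, namely $m_3$ (with $m_1,m_2$ determined as $m_1=\max(0,-s)$, $m_2=\max(0,s)$, up to also letting the nonzero one of $m_1,m_2$ grow while the other stays $0$ — one has to be a little careful here, since for a given $s\ge 0$ one can have $(m_1,m_2)=(0,s),(0,s+1)$ only if that changes $s$; actually for fixed net shift $s$ the pair $(m_1,m_2)$ is \emph{uniquely} determined by $m_1m_2=0$, so the only remaining freedom is $m_3$). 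For fixed $s$ and the corresponding $e_{a+s}$, the coefficient of $e_{a+s}$ in $\rho_N(X)^{m_1}\rho_N(Y)^{m_2}\rho_N(H)^{m_3}e_a$ equals $c\cdot h_{N,a}^{m_3}$ for a nonzero constant $c$ depending only on $s$, where $h_{N,a}=N+1-2a$. Provided $h_{N,a}\ne 0$, i.e. $a\ne (N+1)/2$, the powers $h_{N,a}^{m_3}$, $m_3=0,1,\ldots$, are distinct, and a Vandermonde argument gives linear independence of the family over $m_3$.

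The role of the parameter $t$ is precisely to guarantee $h_{N,a}\ne 0$ for every relevant index $a$: one checks that the positions of all the $1$'s of $v$ — both the bump centers $a_j$ and the final index $b$ — lie strictly below $(N+1)/2$ when $t$ is chosen appropriately (indeed one should verify that with the stated pattern all the $1$'s sit in the ``first half'' of the $N$ coordinates, so $h_{N,a}>0$). I would carry out that index bookkeeping explicitly: compute $a_j = \big(\sum_{i<j} (2(d+1-i)+1)\big) + (d+1-j) + 1$ and $b = (d+1)^2$, confirm $2a_j < N+1$ and $2b < N+1$, and also confirm that for any admissible $(m_1,m_2,m_3)$ the shifted index $a_j + (m_2-m_1)$ still stays inside the $j$-th block (this uses $|m_2-m_1|\le d$ and the block half-width $d+1-j$, with the edge case $j=d$, half-width $1$, forcing a check that shifts of size $>1$ push out of that block but into a controlled region — one must make sure these don't collide with the $e_b$ part or with the tail; here is where the isolated $1$ at position $b$ and the padding $0_t$ do their job).

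\textbf{Main obstacle.} The genuinely delicate part is the collision analysis: verifying that for \emph{all} admissible exponent tuples simultaneously, the supports coming from distinct bumps (and from the isolated vector $e_b$) never overlap, so that the global linear independence really does reduce to the per-bump Vandermonde computation. The blocks have shrinking half-widths $d, d-1,\ldots,1$ while the shifts can be as large as $d$, so the smaller blocks ``overflow'' — one has to check that these overflows are absorbed either by the zero-padding between blocks or by the $0_t$ tail, and that no two overflowing contributions ever coincide. Organizing this bookkeeping cleanly (perhaps by ordering monomials lexicographically by $(\,$net shift, $m_3)$ within each bump and projecting onto successive coordinate ranges, mimicking the successive-projection argument used in the proof of Theorem \ref{mainthm}) is the step I expect to require the most care; once the supports are disentangled, the nonvanishing of $h_{N,a}$ and the Vandermonde determinant finish the argument routinely.
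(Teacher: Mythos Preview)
Your reduction ``it suffices to prove linear independence bump-by-bump, for a single basis vector $e_a$'' is not merely delicate---it is false, and this is the genuine gap. For a single bump $e_a$ and a fixed net shift $s$, every vector $\rho_N(X)^{m_1}\rho_N(Y)^{m_2}\rho_N(H)^{m_3}e_a$ with $m_1m_2=0$ and $m_2-m_1=s$ is a scalar multiple of the \emph{same} basis vector $e_{a+s}$. As $m_3$ ranges over $0,\ldots,d-|s|$ you obtain $d-|s|+1$ vectors lying in a one-dimensional space, so they are certainly linearly dependent once $|s|<d$. The fact that the scalars $h_{N,a}^{m_3}$ are distinct does not help: a Vandermonde system needs several evaluation \emph{points}, not several powers of one point. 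In short, a single bump cannot see the parameter $m_3$.

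What actually makes the argument work is precisely the interaction between bumps that you were trying to eliminate. The $d+1$ positions $i_1,\ldots,i_{d+1}$ carry pairwise distinct diagonal eigenvalues $h_{i_s}=N+1-2i_s$, and these are the evaluation points for the Vandermonde. The paper does \emph{not} separate the bumps; instead it runs a backward induction on the shift $|k|$. After the coefficients with $|k|>m$ have been killed, one checks (this is the real combinatorial content) that for each $s=1,\ldots,d-m+1$ the coordinate $e_{i_s\pm m}$ receives a contribution from bump $s$ only, because the spacing $i_{s}-i_{s-1}=2(d-s+2)>2m$ for those $s$. This yields the system $\sum_{\ell=0}^{d-m}\alpha_{\pm m,\ell}(h_{i_s})^\ell=0$ for $s=1,\ldots,d-m+1$, and now the Vandermonde in the $d-m+1$ distinct values $h_{i_s}$ forces $\alpha_{\pm m,0}=\cdots=\alpha_{\pm m,d-m}=0$. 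So the ``collisions'' you flagged are not a bookkeeping nuisance to be tidied away: the design of $v$ is exactly that, for each shift size $m$, \emph{enough} bumps (namely $d-m+1$ of them) are collision-free to feed the Vandermonde, while the remaining collisions are rendered harmless by the induction hypothesis.
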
 

\begin{proof}
Let $e_1,\ldots,e_{(d+1)^2+t}$ be the standard basis of $\CC^{(d+1)^2+t}$. Then
\begin{equation}\label{veeq}
	v=e_{i_1}+\ldots+e_{i_{d+1}}
\end{equation}
 where $i_1=d+1$ and $i_{k+1}=i_k+2(d-k+1)$ for $k=1,\ldots,d$.
Note that $i_{d+1}=(d+1)^2$. 

For every $k=-d,\ldots,d$ and $\ell=0,\ldots,d$ we write
\begin{equation}\label{zdefeq}
	Z_k=\left\{ \begin{array}{cc} \rho_{(d+1)^2+t}(X)^k & \text{ if } k>0 \\ 1 & \text{ if } k=0 \\ \rho_{(d+1)^2+t}(Y)^{-k} & \text{ if } k<0 \end{array} \right. 
		\quad \text{ and } \quad H_\ell=\rho_{(d+1)^2+t}(H)^\ell.
\end{equation}
To prove that all $Z_k H_\ell v$ with $|k|+\ell \le d$ are linearly independent we assume that
	\begin{equation}\label{lin-comb-zero}
		\sum_{\substack{|k|+\ell \le d}} \alpha_{k,\ell} Z_k H_\ell v=0.
	\end{equation}
Since $(d+1)^2+t$ is fixed in the proof, we abbreviate $x_j:=x_{(d+1)^2+t,j}$, $y_j:=y_{(d+1)^2+t,j}$
and $h_j:=h_{(d+1)^2+t,j}$.
Thus
\begin{equation}\label{zheq}
	Z_k H_{\ell} e_j = z_{j,k} (h_j)^{\ell} e_{j-k}
\end{equation}
where $z_{j,k}=0$ if $j-k \not\in \{1,\ldots,(d+1)^2+t\}$ while in other cases 
\begin{equation*}
	z_{j,k}=\left\{ \begin{array}{cc} x_{j-k} \cdots x_{j-1} & \text{ if } k>0 \\ 1 & \text{ if } k=0 \\ y_{j} \cdots y_{j-k-1} & \text{ if } k<0 \end{array} \right.
\end{equation*}
Since $x_j$ and $y_j$ are nonzero for $j=1,\ldots,(d+1)^2+t-1$, it follows that $z_{j,k}$ are also nonzero when $1\leq j-k \leq (d+1)^2+t$.
If we substitute \eqref{veeq} and \eqref{zheq} into \eqref{lin-comb-zero}, we get
\begin{equation}\label{longeq}
	\sum_{k=-d}^d  \sum_{r=1}^{d+1} (\sum_{\ell=0}^{d-|k|} \alpha_{k,\ell} (h_{i_r})^\ell) z_{i_r,k}  e_{i_r-k}=0.
\end{equation}

We prove by backward induction on $|k|$ that the equation \eqref{longeq} implies $\alpha_{k,\ell}=0$ for all $k$ and $\ell$ such that $|k|+\ell \le d$.
This means we prove:
\begin{itemize}
	\item \textbf{Induction base:} $\alpha_{d,0}=\alpha_{-d,0}=0$.
	\item \textbf{Induction step:} Fix $m \in  \{0,\ldots,d-1\}$. Suppose $\alpha_{k,\ell}=0$ for $|k| \ge m+1$ and prove that $\alpha_{k,\ell}=0$ for $|k|=m$.
\end{itemize}

To establish the base of induction we first compute the coefficient of $e_1$ in \eqref{longeq}. Note that $e_{i_r-k}=e_1$ iff $r=1$ and $k=d$,
so that $|k|+\ell \le d$ forces $\ell=0$. 
Since $z_{i_1,d} \ne 0$ and $h_{i_1} \ne 0$, it follows that $\alpha_{d,0}=0$. Next we compute the coefficient of $e_{2d+1}$ in \eqref{longeq}. Note that $e_{i_r-k}=e_{2d+1}$ iff $r=1, k=-d$ or $r=2,k=d$. 
In both cases, it follows that $\ell=0$. Since $\alpha_{d,0}=0$ and $z_{i_1,-d} \ne 0$ and $h_{i_1} \ne 0$, it follows that $\alpha_{-d,0}=0$.

To prove induction step we assume that $\alpha_{k,\ell}=0$ for every $k$ with $|k| \ge m+1$. Then equation \eqref{longeq} implies that
\begin{equation}\label{shorteq}
	\sum_{k=-m}^m  \sum_{r=1}^{d+1} (\sum_{\ell=0}^{d-|k|} \alpha_{k,\ell} (h_{i_r})^\ell) z_{i_r,k}  e_{i_r-k}=0.
\end{equation}
Suppose that $s \in \{1,\ldots,d-m+1\}$. We claim that equation \eqref{shorteq} contains only one term with $e_{i_s-m}$ and only one term with $e_{i_s+m}$.
Namely, if $i_r-k=i_s-m$ for some $r=1,\ldots,d+1$ and $k=-m,\ldots,m$ then $m-k =i_s-i_r$. Clearly, $0 \le m-k \le 2m$, so $s \ge r$.
If $r=s$ then $k=m$ and we are done. Otherwise, $2m \ge i_s-i_r \ge i_s-i_{s-1} = 2(d-s+2)$ which implies that $s \ge d-m+2$. The other case is similar.
It follows that for every $s=1,\ldots,d-m+1$ 
\begin{equation}
	(\sum_{\ell=0}^{d-m} \alpha_{m,\ell} (h_{i_s})^\ell) z_{i_s,m} =0 \quad \text{ and } \quad (\sum_{\ell=0}^{d-m} \alpha_{-m,\ell} (h_{i_s})^\ell) z_{i_s,-m} =0 
\end{equation}
We divide out by $z_{i_s,m}$ and $z_{i_s,-m}$ to obtain two Vandermonde systems
\begin{equation}
	\sum_{\ell=0}^{d-m} \alpha_{m,\ell} (h_{i_s})^\ell =0 \quad \text{ and } \quad \sum_{\ell=0}^{d-m} \alpha_{-m,\ell} (h_{i_s})^\ell =0, \quad
	\text{for } s=1,\ldots,d-m+1.
\end{equation}
Since the $h_{i_s}$ are distinct for different $s$, the Vandermonde coefficient matrices in both are invertible. It follows that
\begin{equation}
\alpha_{m,0}=\ldots=\alpha_{m,d-m}=0 \quad \text{ and } \quad \alpha_{-m,0}=\ldots=\alpha_{-m,d-m}=0
\end{equation}
which completes the proof of the induction step.
\end{proof}

\subsection{$\I$-local directional linear dependence  in $\sl_2$}
\label{sec-I-LDLD-sl2}

In this subsection we prove Theorem \ref{mainthm2}, which is a characterization of the situation when finitely many elements of $U(\sl_2)$ are $\I$-locally directionally linearly independent,
where $\I$ stands for the class of all finite-dimensional irreducible representations of $U(\sl_2)$.

Recall from the previous subsection that $E_{ij}$, $1\leq i,j\leq 2$, is the standard basis of $M_2(\CC)$, and $X:=E_{12}$, $Y:=E_{21}$, $H:=E_{11}-E_{22}$
is the standard basis of $\sl_2$.
Let $\rho_k$, for $k\in \NN$, be the unique (up to equivalence) irreducible representation of $\sl_2$ of dimension $k$. 
The element
	$$C:=XY+\frac{1}{2}H^2+YX=2XY+\frac{1}{2}H^2-H$$ 
of the enveloping algebra $U(\sl_2)$ is called the \textit{Casimir element}. It is well-known that $C$ generates the center $Z$ of $U(\sl_2)$, i.e., $Z=\CC[C]$, and that
$\rho_k(C)=\frac{1}{2}(k^2-1) I_k$ where $I_k$ is the identity matrix of size $k$ (see \cite{hum}).
  We write $c_k:=\frac{1}{2}(k^2-1)$ for all $k\in \NN$.
Moreover, every element $p\in U(\sl_2)$ can be written in the form $p=\sum_{i=1}^m f_i s_i$ where $f_i$ are monomials of the form $X^{i_1}Y^{i_2}H^{i_3}$ with $i_1 i_2=0$ and $s_i\in \CC[C]$ are central elements.


Before we proceed with the proof of Theorem \ref{mainthm2}, we need the following lemma:

\begin{lemma} \label{technical-lemma}
	Suppose $u_1,\ldots,u_k\in \CC(z)^\ell$, for $k,\ell\in \NN$, are linearly dependent for infinitely many complex values of $z$. 
	Then they are linearly dependent over $\CC(z)$.
\end{lemma}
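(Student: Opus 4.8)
The plan is to detect linear (in)dependence over $\CC(z)$ by means of maximal minors. I would arrange the vectors as the columns of a matrix $M(z)=[u_1\mid\cdots\mid u_k]\in\CC(z)^{\ell\times k}$. If $k>\ell$ the vectors are linearly dependent over $\CC(z)$ for trivial reasons, so one may assume $k\le\ell$. The relevant fact from linear algebra over the field $\CC(z)$ is: the columns $u_1,\ldots,u_k$ are linearly independent over $\CC(z)$ if and only if at least one of the $\binom{\ell}{k}$ maximal ($k\times k$) minors of $M(z)$ is a nonzero element of $\CC(z)$.

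I would then argue by contraposition. Suppose $u_1,\ldots,u_k$ are linearly independent over $\CC(z)$, and fix a $k\times k$ minor $\Delta(z)\in\CC(z)$ with $\Delta\neq 0$. Let $S\subseteq\CC$ be the union of the poles of all entries of the $u_i$ together with the zeros and poles of $\Delta$; this set is finite because a nonzero rational function has only finitely many zeros and poles. For every $z_0\in\CC\setminus S$ the specialization $M(z_0)\in\CC^{\ell\times k}$ is well-defined and the corresponding minor of $M(z_0)$ equals $\Delta(z_0)\neq 0$, so $\rank M(z_0)=k$, i.e.\ $u_1(z_0),\ldots,u_k(z_0)$ are linearly independent in $\CC^\ell$. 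Hence the specialized vectors are linearly dependent for only finitely many $z_0\in\CC$, contradicting the hypothesis. Therefore $u_1,\ldots,u_k$ must be linearly dependent over $\CC(z)$.

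This argument is essentially routine: the only points needing a little care are excluding the finitely many bad values of $z$ (the poles of the $u_i$ and the zeros and poles of the chosen minor) and handling the trivial case $k>\ell$ separately, so I do not expect any genuine obstacle. I would also note in passing that the same proof works verbatim with $\CC$ replaced by any infinite field.
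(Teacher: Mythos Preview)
Your proof is correct and follows essentially the same approach as the paper: both argue by contraposition and use that a nonzero rational function has only finitely many zeros. The only cosmetic difference is that the paper extends $u_1,\ldots,u_k$ to a basis of $\CC(z)^\ell$ and uses the full $\ell\times\ell$ determinant, whereas you work directly with a nonzero $k\times k$ minor; your version is arguably slightly cleaner since it avoids the extension step.
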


\begin{proof}
	Assume to the contrary that $u_1,\ldots,u_k$ are linearly independent over $\CC(z)$. Then we can add vectors $u_{k+1},\ldots,u_\ell\in \CC(z)^\ell$ such that $u_1,\ldots,u_\ell$ form a basis
	for  $\CC(z)^\ell$ over $\CC(z)$. The determinant of the matrix with columns $u_1,\ldots,u_\ell$ is a non-zero rational function $\frac{p(z)}{r(z)}\in \CC(z)$ which has only finitely many zeros, a contradiction 
	with the hypothesis that infinitely many evaluations of $u_1,\ldots,u_k$ are $\CC$-linearly dependent.
\end{proof}

\begin{proof}[Proof of Theorem \ref{mainthm2}]
	To prove the implication $\eqref{mainthm2-pt1} \Rightarrow \eqref{mainthm2-pt2}$, we first divide out the greatest common divisor of $z_1,\ldots,z_k\in \CC[C]$ from
 	the equation $\sum_{i=1}^k z_ip_i=0$. Hence, we can assume WLOG 
	that $z_1,\ldots,z_k$ do not have a common zero. Applying each $\rho_n \in \I$, for $n\in \NN$, to $\sum_{i=1}^k z_ip_i=0$	
	one gets
	$0=\sum_{i=1}^k z_i(c_n)\rho_n(p_i)$. Since $z_1,\ldots, z_k$ are without common zeroes, this linear combination is non-trivial and hence $p_1,\ldots,p_k$ are 
	$\I$-locally linearly dependent.

	The implication $\eqref{mainthm2-pt2} \Rightarrow \eqref{mainthm2-pt3}$ is trivial.

	It remains to prove the implication $\eqref{mainthm2-pt3} \Rightarrow \eqref{mainthm2-pt1}$.
	We write $p_j=\sum_{i=1}^m f_i t_{ij}$, $m\in \NN$, where $t_{ij}\in\CC[C]$ are central elements and 
	$f_i$ are different monomials of the form $X^{i_1}Y^{i_2}H^{i_3}$ with $i_1,i_2,i_3\in \NN_0$ and $i_1 i_2=0$.
	By Proposition \ref{irr-repr-prop} for all $n\in \NN_0$ sufficiently large there exist vectors $v_n\in V_{\rho_n}$ 
	such that vectors $\rho_n(f_i)v_n$, $i=1,\ldots,m$, are linearly independent.
	Therefore, for those $n$, the vectors $\rho_n(p_1)v_n,\ldots, \rho_n(p_k)v_n,$ are linearly dependent if and only if
	the vectors $[t_{1j}(c_n),\ldots,t_{mj}(c_n)]^T, j=1,\ldots,k$, are linearly dependent.
	Since this is true for infinitely many $n$-s, this implies by Lemma \ref{technical-lemma} that the vectors $[t_{1j}(C),\ldots,t_{mj}(C)]^T, j=1,\ldots,k$, are 
	$\CC(C)$-linearly dependent
	and hence there exist $v_j(C)\in \CC(C)$, $j=1,\ldots,k$, not all zero such that 
	$0=\sum_{j=1}^k v_j(C) [t_{1j}(C),\ldots,t_{mj}(C)]^T$.
	Multiplying by the least common denominator $z_0\in \CC[C]$ of nonzero $v_1,\ldots,v_k$ we obtain
		$0=\sum_{j=1}^k z_j [t_{1j}(C),\ldots,t_{mj}(C)]^T$
	for some $z_1,\ldots,z_k\in \CC[C]$, not all zero and hence $0=z_1 p_1+\ldots+z_k p_k.$ 
\end{proof}

\section{Reflexive closures}

\subsection{Reflexive closures in $\sl_2$}
\label{sec-ref}

Assume the notation from the previous section. Let $q,p_1,\ldots,p_k$ be elements of $U(\sl_2)$. 
Theorem \ref{I-ref-cl-sl2} gives a closely related sufficient condition \eqref{cl1-I-rc-sl2} and a necessary condition \eqref{cl4-I-rc-sl2} for $q$ to belong to the $\I$-local span, resp.\ the $\I$-reflexive closure, of $p_1,\ldots,p_k$.
The conditions differ only in the assumptions on the zero set of the central element $z_0$.
 
\begin{thm}\label{I-ref-cl-sl2}
	Let $q, p_1,\ldots,p_k$ be elements of $U(\sl_2)$ and consider the following statements:
	\begin{enumerate} 
		\item\label{cl1-I-rc-sl2} 
			There exist central elements $z_0, z_1,\ldots,z_k\in \CC[C]$ such that $z_0$ is nonzero,
			$z_0(c_n)\neq 0$ if $\rho_n(q)\neq 0$ and 
				$z_0 q=z_1 p_1+\ldots+z_k p_k.$
		\item\label{cl2-I-rc-sl2} 
			$q\in  \loc_\I\{p_1,\ldots,p_k\}.$ 
		\item\label{cl3-I-rc-sl2} 
			$q\in  \rf_\I\{p_1,\ldots,p_k\}.$ 
		\item\label{cl4-I-rc-sl2} 
			There exist central elements $z_0, z_1,\ldots,z_k\in \CC[C]$ such that $z_0$ is nonzero and
			$z_0 q=z_1 p_1+\ldots+z_k p_k.$
	\end{enumerate}
	Then $\eqref{cl1-I-rc-sl2}\Rightarrow \eqref{cl2-I-rc-sl2}\Rightarrow \eqref{cl3-I-rc-sl2}\Rightarrow \eqref{cl4-I-rc-sl2}$
	and the reverse implications do not hold.
\end{thm}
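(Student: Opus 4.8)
The plan is to establish the chain of implications $\eqref{cl1-I-rc-sl2}\Rightarrow \eqref{cl2-I-rc-sl2}\Rightarrow \eqref{cl3-I-rc-sl2}\Rightarrow \eqref{cl4-I-rc-sl2}$ first, and then to produce explicit counterexamples to the three reverse implications. For $\eqref{cl1-I-rc-sl2}\Rightarrow \eqref{cl2-I-rc-sl2}$: given $z_0 q=\sum z_i p_i$, apply $\rho_n$ for each $n\in\NN$ to obtain $z_0(c_n)\rho_n(q)=\sum_i z_i(c_n)\rho_n(p_i)$. If $\rho_n(q)\neq 0$ then $z_0(c_n)\neq 0$ by hypothesis, so we may divide and conclude $\rho_n(q)\in\span\{\rho_n(p_1),\ldots,\rho_n(p_k)\}$; if $\rho_n(q)=0$ the inclusion is trivial. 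Since every finite-dimensional irreducible representation of $U(\sl_2)$ is (equivalent to) some $\rho_n$, this gives $q\in\loc_\I\{p_1,\ldots,p_k\}$. The implication $\eqref{cl2-I-rc-sl2}\Rightarrow \eqref{cl3-I-rc-sl2}$ is immediate from the general inclusion $\loc_\C\subseteq\rf_\C$ noted in the introduction (applying $\pi(q)\in\span\{\pi(p_i)\}$ to a vector $v$).

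For $\eqref{cl3-I-rc-sl2}\Rightarrow \eqref{cl4-I-rc-sl2}$ I would mimic the argument in the proof of Theorem \ref{mainthm2}. Write $q=\sum_{i=1}^m f_i s_i$ and $p_j=\sum_{i=1}^m f_i t_{ij}$ with $s_i,t_{ij}\in\CC[C]$ and $f_i$ distinct monomials $X^{i_1}Y^{i_2}H^{i_3}$ with $i_1 i_2=0$ (enlarging the common monomial set if necessary). By Proposition \ref{irr-repr-prop}, for all sufficiently large $n$ there is $v_n\in V_{\rho_n}$ with $\rho_n(f_1)v_n,\ldots,\rho_n(f_m)v_n$ linearly independent. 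Hence $\rho_n(q)v_n\in\span\{\rho_n(p_1)v_n,\ldots,\rho_n(p_k)v_n\}$ translates, via the coordinates in this independent family, into $[s_1(c_n),\ldots,s_m(c_n)]^T\in\span\{[t_{1j}(c_n),\ldots,t_{mj}(c_n)]^T : j=1,\ldots,k\}$ in $\CC^m$; equivalently the augmented family of $k+1$ vectors is $\CC$-linearly dependent. This holds for infinitely many $n$, so by Lemma \ref{technical-lemma} the corresponding $k+1$ vectors over $\CC(C)$ are $\CC(C)$-linearly dependent. Extracting a nontrivial relation in which the coefficient of the $q$-vector is chosen as large as possible: if that coefficient can be taken nonzero we clear denominators to get $z_0 q=\sum z_i p_i$ with $z_0\neq 0$; if every $\CC(C)$-relation forces the $q$-coefficient to vanish, then the $t$-vectors alone are dependent, giving $\sum_j w_j(C)[t_{1j}(C),\ldots]^T=0$, i.e. $\sum_j z_j p_j=0$ after clearing denominators, and then $z_0=z_1$ (say) works since $z_0 q = z_1 p_1+\ldots+z_k p_k$ can be arranged by taking $z_0=0\cdot q$... here I must be slightly careful, and instead simply note that $0\cdot q$ is not allowed since $z_0$ must be nonzero; the cleaner route is: a $\CC(C)$-dependence of $[s_i]_i,[t_{ij}]_i$ always exists, and since the $t$-vectors together with $[s_i]_i$ live in $\CC(C)^m$, either $[s_i]_i$ is already in their $\CC(C)$-span (done, clear denominators) — and it must be, because otherwise the $k+1$ vectors would be independent, contradicting what we derived. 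So $\eqref{cl4-I-rc-sl2}$ follows.

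For the failure of the reverse implications I would exhibit concrete elements of $U(\sl_2)$. The gap between $\eqref{cl1-I-rc-sl2}$ and $\eqref{cl4-I-rc-sl2}$ is exactly the condition on the zero set of $z_0$ relative to $\{n : \rho_n(q)\neq 0\}$, so to separate $\eqref{cl4-I-rc-sl2}$ from $\eqref{cl1-I-rc-sl2}$ (which also separates $\eqref{cl3-I-rc-sl2}$ from $\eqref{cl1-I-rc-sl2}$ once we also check $q\in\rf_\I$ fails) I would look for $q$ with $(C-c_N)q=z_1 p_1+\ldots+z_k p_k$ for some single $N$, where $\rho_N(q)\neq 0$ but no relation avoiding the zero $c_N$ exists; a candidate is built from $q$ whose image under $\rho_N$ is nonzero yet which coincides with a central multiple of some $p_i$ only after multiplying by $C-c_N$. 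To separate $\eqref{cl4-I-rc-sl2}$ from $\eqref{cl3-I-rc-sl2}$ one needs $q$ with $z_0 q=\sum z_i p_i$, $z_0\neq 0$, but $\rho_n(q)v\notin\span\{\rho_n(p_i)v\}$ for some $n$ dividing a zero of $z_0$ and some $v$ — e.g. $p_1=(C-c_n)X$, $q=X$: then $(C-c_n)q=p_1$, yet $\rho_n(p_1)=0$ while $\rho_n(X)v$ can be made nonzero, so $q\notin\rf_\I\{p_1\}$. To separate $\eqref{cl3-I-rc-sl2}$ from $\eqref{cl2-I-rc-sl2}$ I expect the subtlest example: one wants $q\in\rf_\I$ but $q\notin\loc_\I$, i.e. for each $n$ and each $v$ the vector $\rho_n(q)v$ is a scalar combination of the $\rho_n(p_i)v$, but $\rho_n(q)$ itself is not in the span of the $\rho_n(p_i)$ as operators — this is the local analogue of the matrix phenomenon in item \eqref{rem-pt1} of the introduction, and I would try to lift a reflexive-but-not-spanned example from a single matrix algebra $M_n(\CC)=\rho_n(U(\sl_2))$ to $U(\sl_2)$ by choosing $p_i,q$ whose images under every $\rho_n$ realize such a configuration simultaneously (for instance using that $\rho_n(U(\sl_2))=\endo(\CC^n)$ and encoding rank-one-range operators).

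The main obstacle I anticipate is the last separation, $\eqref{cl3-I-rc-sl2}\not\Rightarrow\eqref{cl2-I-rc-sl2}$: producing a single pair $(q;p_1,\ldots,p_k)$ in $U(\sl_2)$ that is reflexive-not-spanned \emph{uniformly across all $\rho_n$} requires controlling the representation-theoretic behaviour at every dimension at once, not just at one $n$, and the naive ``take $E_{ij}$'' trick from $M_n(\CC)$ does not transport directly because a fixed element of $U(\sl_2)$ must restrict compatibly to all $\rho_n$. The other two separations should be routine: they are governed purely by the zero locus of a single central polynomial factor $C-c_N$, and the hard analytic content — going from dependence at infinitely many $c_n$ to dependence over $\CC(C)$ — is already packaged in Lemma \ref{technical-lemma} and Proposition \ref{irr-repr-prop}.
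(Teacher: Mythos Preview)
Your forward implications are essentially the paper's. The only slip is in $(3)\Rightarrow(4)$: from $[s_i(c_n)]\in\span\{[t_{ij}(c_n)]\}$ you pass to ``the augmented family of $k+1$ vectors is $\CC$-linearly dependent'' and invoke Lemma~\ref{technical-lemma}, but linear dependence of the augmented family over $\CC(C)$ does \emph{not} force the $s$-vector into the $\CC(C)$-span of the $t$-vectors (the $t$-vectors may already be dependent, and every relation may then have zero $s$-coefficient). Your attempted rescue --- ``otherwise the $k+1$ vectors would be independent'' --- is simply false. The fix is to retain the stronger span-membership hypothesis throughout and apply Lemma~\ref{technical-lemma-2} directly, exactly as the paper does.

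Your counterexample for $(4)\not\Rightarrow(3)$ is correct and matches the paper's (which uses $q=I$, $p=(C-c_2)I$). The two remaining counterexamples are the real content and you have not produced them; your outlined strategies are either muddled (the paragraph on ``separating $(4)$ from $(1)$'' confuses which condition should hold and which should fail) or too vague to succeed (the ``lift a reflexive-not-spanned configuration from a single $M_n$'' idea runs into precisely the uniformity-over-all-$n$ obstacle you yourself flag). The paper's device, which you are missing, is to insert a central factor $C-c_2$ so as to \emph{localize the bad behaviour to the single representation $\rho_2$} and then handle $\rho_2$ by a tiny explicit matrix computation. Concretely: for $(3)\not\Rightarrow(2)$ take $q=X$, $p_1=I+H$, $p_2=X+Y$, $p_3=(C-c_2)X$; for $n\ge3$ one has $\rho_n(q)=(c_n-c_2)^{-1}\rho_n(p_3)$, while for $n=2$ one checks that $\rho_2(X)v\in\span\{\rho_2(I+H)v,\rho_2(X+Y)v\}$ for every $v\in\CC^2$ yet $E_{12}\notin\span\{2E_{11},E_{12}+E_{21},0\}$. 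For $(2)\not\Rightarrow(1)$ take $q=H$, $p_1=CX^2+c_2H$, $p_2=c_2X^2+CH$; here $\rho_2(X^2)=0$ collapses $\rho_2(p_1),\rho_2(p_2)$ to $c_2\rho_2(H)$, for $n\ge3$ a $2\times2$ linear-algebra computation gives $\rho_n(q)\in\span\{\rho_n(p_1),\rho_n(p_2)\}$, but comparing coefficients in any relation $z_0H=z_1p_1+z_2p_2$ forces $c_2 z_0=z_1(c_2^2-C^2)$, hence $z_0(c_2)=0$.
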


The proof of Theorem \ref{I-ref-cl-sl2} uses the following trivial consequence of Lemma \ref{technical-lemma}.

\begin{lemma} \label{technical-lemma-2}
	Suppose $s,u_1,\ldots,u_k\in \CC(z)^\ell$ for $k,\ell\in \NN$, have the property that
	$s(t)\in \span_{\CC}\{u_1(t),\ldots,u_k(t)\}$ for infinitely many $t\in\CC$.
	 Then $$s(z)\in \span_{\CC(z)}\{u_1(z),\ldots,u_k(z)\}.$$
\end{lemma}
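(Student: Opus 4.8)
The plan is to deduce this from Lemma \ref{technical-lemma} by first trimming $u_1,\ldots,u_k$ down to a $\CC(z)$-linearly independent sublist without destroying the hypothesis, and then applying the linear-dependence statement to the augmented list $s,u_1,\ldots$.

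First I would let $r$ be the rank over $\CC(z)$ of the $\ell\times k$ matrix with columns $u_1(z),\ldots,u_k(z)$, and reindex so that $u_1(z),\ldots,u_r(z)$ are linearly independent over $\CC(z)$. Then each $u_j(z)$ with $j>r$ is a $\CC(z)$-linear combination of $u_1(z),\ldots,u_r(z)$; since the coefficient rational functions involved have only finitely many poles, for all but finitely many $t\in\CC$ we have $\span_\CC\{u_1(t),\ldots,u_k(t)\}=\span_\CC\{u_1(t),\ldots,u_r(t)\}$. Hence $s(t)\in\span_\CC\{u_1(t),\ldots,u_r(t)\}$ still holds for infinitely many $t$, so the vectors $s,u_1,\ldots,u_r$ are $\CC$-linearly dependent for infinitely many values of $t$.

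Now Lemma \ref{technical-lemma} applies to $s,u_1,\ldots,u_r$: there exist $a_0,a_1,\ldots,a_r\in\CC(z)$, not all zero, with $a_0 s+a_1u_1+\cdots+a_ru_r=0$ in $\CC(z)^\ell$. If $a_0=0$ this contradicts the $\CC(z)$-linear independence of $u_1,\ldots,u_r$, so $a_0\neq 0$ and $s=-a_0^{-1}(a_1u_1+\cdots+a_ru_r)\in\span_{\CC(z)}\{u_1,\ldots,u_r\}\subseteq\span_{\CC(z)}\{u_1,\ldots,u_k\}$, as claimed. The only point requiring any care is the bookkeeping in the trimming step — making sure the finitely many $t$ that are poles of the reduction coefficients are discarded — but this is routine, which is why the lemma is labelled a trivial consequence.
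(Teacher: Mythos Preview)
Your argument is correct and is exactly the natural way to make precise the paper's claim that this is a ``trivial consequence of Lemma \ref{technical-lemma}'': the paper gives no proof at all, and the trimming-to-an-independent-subset step you supply is the standard manoeuvre needed to ensure the dependence relation produced by Lemma \ref{technical-lemma} actually involves $s$.
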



\begin{proof}[Proof of Theorem \ref{I-ref-cl-sl2}]
	To prove $\mathrm{\eqref{cl1-I-rc-sl2}}\Rightarrow\mathrm{\eqref{cl2-I-rc-sl2}}$ note that $z_0 q=z_1 p_1+\ldots+z_k p_k$ implies that 
		$z_0(c_n)\rho_n(q)=z_1(c_n)\rho_n(p_1)+\ldots+z_k(c_n)\rho_n(p_k).$
	If $\rho_n(q)=0$, then clearly $\rho_n(q)\in \span\{\rho_{n}(p_1),\ldots,\rho_n(p_k)\}$. Otherwise $\rho_n(q)\neq 0$ which 
	implies by assumption that $z_0(c_n)\neq 0$ and hence again $\rho_n(q)\in \span\{\rho_{n}(p_1),\ldots,\rho_n(p_k)\}$.
	
	The implication $\mathrm{\eqref{cl2-I-rc-sl2}}\Rightarrow \mathrm{\eqref{cl3-I-rc-sl2}}$ is trivial. 
	
	The proof of $\mathrm{\eqref{cl3-I-rc-sl2}}\Rightarrow\mathrm{\eqref{cl4-I-rc-sl2}}$ is analogous to the proof of the implication  $\eqref{mainthm2-pt3}\Rightarrow \eqref{mainthm2-pt1}$ in Theorem 
	\ref{mainthm2}
	only that we use Lemma \ref{technical-lemma-2} instead of Lemma \ref{technical-lemma}.
%
	
	It remains to construct counterexamples for the reverse implications. To prove $\mathrm{\eqref{cl1-I-rc-sl2}}\not\Leftarrow\mathrm{\eqref{cl2-I-rc-sl2}}$ take
		$q=H$,  $p_1 = C X^2 + c_2 H$ and $p_2 = c_2 X^2 + C H$.
	First, we prove that $q\in  \loc_\I\{p_1,p_2\}.$ Since $\rho_2(X^2)=0$ we have
	$\rho_2(p_1)=\rho_2(p_2)=c_2\rho_2(H)=c_2\rho_2(q)$, so $\rho_2(q)\in \span\{\rho_2(p_1),\rho_2(p_2)\}$.
	For $n>2$ we have 
		$(c_2^2-c_n^2) \rho_n(q) = c_2 \rho_n(p_1)- c_n\rho_n(p_2),$
	which also implies that $\rho_n(q)\in \span\{\rho_n(p_1),\rho_n(p_2)\}.$
	Second, we show that each triplet of central elements $z_0$, $z_1$, $z_2$ that satisfy $z_0q=z_1p_1+z_2p_2$ must have that
	$z_0(c_2)= 0$. By comparing the coefficients at $X^2$ and $H$ we get the system
	$0=z_1C+z_2c_2$ and $z_0=z_1c_2+z_2C$.
	Hence $c_2z_0=z_1(c_2^2-C^2)$ and $z_0(c_2)=0$.
	
	To prove $\mathrm{\eqref{cl2-I-rc-sl2}}\not\Leftarrow\mathrm{\eqref{cl3-I-rc-sl2}}$ take 
		$q=X$, $p_1 = I+H$, $p_2 = X+Y$ and $p_3=(C-c_2)X.$
	Clearly 
		$$\rho_2(q)=E_{12} \notin \span\{2 E_{11},E_{12}+E_{21},0\}=\span\{\rho_2(p_1),\rho_2(p_2),\rho_2(p_3)\},$$
	which implies that $q \not\in \loc_\I\{p_1,p_2,p_3\}$. 
	Since
		$$[y,0]^T \in \span\{2 [x,0]^T,[y,x]^T\}$$
	for every $x$ and $y$, we have that 
		$\rho_2(q)v \in \span\{\rho_2(p_1)v,\rho_2(p_2)v,\rho_2(p_3)v\}.$
	Clearly, we also have that 
		$\rho_n(q)v=\frac{1}{c_n-c_2}\rho_n(p_3)v$
	for all  $n\geq 3$ and  $v\in \CC^n$, which implies that $q \in \rf_I\{p_1,p_2,p_3\}$.
	
	To prove $\mathrm{\eqref{cl3-I-rc-sl2}}\not\Leftarrow\mathrm{\eqref{cl4-I-rc-sl2}}$ take $q=I$ and $p=(C-c_2)I$
	and notice that $(C-c_2)q=p$ but $q \not\in \rf_\I\{p\}$ since 
		$\rho_2(q)e_1=e_1 \not\in \{0\}= \span\{\rho_2(p)e_1\}.$
\end{proof}

As seen in the proof \eqref{cl4-I-rc-sl2} of Theorem \ref{I-ref-cl-sl2} does not suffice to conclude $q\in  \rf_\I\{p_1,\ldots,p_k\}.$ 
The failure of the reverse implications in Theorem \ref{I-ref-cl-sl2} are caused by representations $\rho_n$ of small dimension $n$. The following theorem says that, for
$n$ big enough, the same reverse implications hold true.

\begin{thm}
	Let $q, p_1,\ldots,p_k$ be elements from $U(\sl_2)$. Then the following statements are equivalent:
	\begin{enumerate}
		\item\label{cl4-I-rc2-sl2} 
			There exist central elements $z_0, z_1,\ldots,z_k\in \CC[C]$ such that $z_0\neq 0$ and
			$z_0 q=z_1 p_1+\ldots+z_k p_k.$
		\item\label{cl5-I-rc2-sl2} 
			$\rho_n(q) \in \span\{\rho_n(p_1),\ldots,\rho_n(p_k)\}$ for every $n\in\NN$ big enough.
		\item\label{cl6-I-rc2-sl2} 
			$\rho_n(q)v \in \span\{\rho_n(p_1)v,\ldots,\rho_n(p_k)v\}$ for every $n\in\NN$ big enough and every vector $v$.
	\end{enumerate}
\end{thm}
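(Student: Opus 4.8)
The plan is to establish the cycle $\eqref{cl4-I-rc2-sl2}\Rightarrow\eqref{cl5-I-rc2-sl2}\Rightarrow\eqref{cl6-I-rc2-sl2}\Rightarrow\eqref{cl4-I-rc2-sl2}$, the point being that the first two implications are immediate while the last is, essentially verbatim, the argument already used for $\eqref{cl3-I-rc-sl2}\Rightarrow\eqref{cl4-I-rc-sl2}$ in Theorem~\ref{I-ref-cl-sl2}. For $\eqref{cl4-I-rc2-sl2}\Rightarrow\eqref{cl5-I-rc2-sl2}$ I would apply $\rho_n$ to $z_0 q = z_1 p_1 + \ldots + z_k p_k$, obtaining $z_0(c_n)\rho_n(q) = \sum_i z_i(c_n)\rho_n(p_i)$; since $z_0 \in \CC[C]$ is a nonzero polynomial and the scalars $c_n = \frac{1}{2}(n^2-1)$ are pairwise distinct, $z_0(c_n) \neq 0$ for all large $n$, so we may divide. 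The implication $\eqref{cl5-I-rc2-sl2}\Rightarrow\eqref{cl6-I-rc2-sl2}$ is trivial.

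For $\eqref{cl6-I-rc2-sl2}\Rightarrow\eqref{cl4-I-rc2-sl2}$ I would let $f_1,\ldots,f_m$ be the list of all distinct monomials $X^{i_1}Y^{i_2}H^{i_3}$ with $i_1i_2 = 0$ that occur in $q,p_1,\ldots,p_k$ and write $q = \sum_{i=1}^m f_i s_i$ and $p_j = \sum_{i=1}^m f_i t_{ij}$ with $s_i,t_{ij} \in \CC[C]$. By Proposition~\ref{irr-repr-prop}, for every sufficiently large $n$ there is a vector $v_n \in \CC^n$ for which $\rho_n(f_1)v_n,\ldots,\rho_n(f_m)v_n$ are linearly independent. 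Because $\rho_n(s_i) = s_i(c_n)I$ and $\rho_n(t_{ij}) = t_{ij}(c_n)I$, the directional containment $\rho_n(q)v_n \in \span\{\rho_n(p_j)v_n : j\}$ supplied by $\eqref{cl6-I-rc2-sl2}$ unravels, for each such $n$, into the coordinatewise statement $[s_1(c_n),\ldots,s_m(c_n)]^T \in \span_\CC\{[t_{1j}(c_n),\ldots,t_{mj}(c_n)]^T : j = 1,\ldots,k\}$. Since this holds for infinitely many $n$, Lemma~\ref{technical-lemma-2} (with $z = C$ and the evaluation points $c_n$) gives $[s_1(C),\ldots,s_m(C)]^T \in \span_{\CC(C)}\{[t_{1j}(C),\ldots,t_{mj}(C)]^T : j\}$; clearing denominators produces a nonzero $z_0 \in \CC[C]$ and $z_1,\ldots,z_k \in \CC[C]$ with $z_0 s_i = \sum_j z_j t_{ij}$ for every $i$, whence $z_0 q = \sum_i f_i (z_0 s_i) = \sum_j z_j \sum_i f_i t_{ij} = \sum_j z_j p_j$.

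There is no real obstacle here; the only thing worth spelling out is why the statement is not already contained in Theorem~\ref{I-ref-cl-sl2}: its hypothesis $\eqref{cl6-I-rc2-sl2}$ concerns only large $n$, whereas Theorem~\ref{I-ref-cl-sl2}\eqref{cl3-I-rc-sl2} demands the containment for all $n$ and all $v$. The resolution, which drives the whole proof, is that the containment for small $n$ was never used in the original argument: the representation furnished by Proposition~\ref{irr-repr-prop} exists precisely for large $n$, and Lemma~\ref{technical-lemma-2} needs only infinitely many evaluation points. The one place to be slightly careful is the unravelling step — once the common expansions $q = \sum f_i s_i$, $p_j = \sum f_i t_{ij}$ are fixed, it is the linear independence of $\rho_n(f_1)v_n,\ldots,\rho_n(f_m)v_n$ that converts the directional containment into the containment of the coefficient tuples, and this conversion must be performed uniformly across all the relevant large $n$.
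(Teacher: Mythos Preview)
Your proposal is correct and follows essentially the same route as the paper: the paper proves $\eqref{cl4-I-rc2-sl2}\Rightarrow\eqref{cl5-I-rc2-sl2}$ by the same division argument, notes $\eqref{cl5-I-rc2-sl2}\Rightarrow\eqref{cl6-I-rc2-sl2}$ is clear, and for $\eqref{cl6-I-rc2-sl2}\Rightarrow\eqref{cl4-I-rc2-sl2}$ simply remarks that the proof of $\eqref{cl3-I-rc-sl2}\Rightarrow\eqref{cl4-I-rc-sl2}$ in Theorem~\ref{I-ref-cl-sl2} (which in turn mirrors the proof of $\eqref{mainthm2-pt3}\Rightarrow\eqref{mainthm2-pt1}$ in Theorem~\ref{mainthm2} with Lemma~\ref{technical-lemma-2} replacing Lemma~\ref{technical-lemma}) goes through because only large $n$ are used there. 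You have written out in full the details that the paper leaves to these back-references, and your remark that Proposition~\ref{irr-repr-prop} and Lemma~\ref{technical-lemma-2} require only infinitely many large $n$ is exactly the observation the paper is invoking.
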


\begin{proof}
	To prove $\eqref{cl4-I-rc2-sl2}\Rightarrow \eqref{cl5-I-rc2-sl2}$ one takes $n$ big enough such that $z_0(c_r)\neq 0$ for every $r\geq n$.
	Notice that for all such $r$ we have that $\rho_r(z_0)=z_0(c_r)\neq 0$ and hence 
		$\rho_r(q)=\frac{1}{\rho_r(z_0)}\sum_{i=1}^k \rho_r(z_i)\rho_r(p_i)$.
	The implication $\mathrm{\eqref{cl5-I-rc2-sl2}}\Rightarrow \mathrm{\eqref{cl6-I-rc2-sl2}}$ is clear.
	The implication $\mathrm{\eqref{cl6-I-rc2-sl2}}\Rightarrow \mathrm{\eqref{cl4-I-rc2-sl2}}$ follows easily from the proof of the implication 
	$\mathrm{\eqref{cl3-I-rc-sl2}}\Rightarrow \mathrm{\eqref{cl4-I-rc-sl2}}$ in Theorem \ref{I-ref-cl-sl2}, since for $n$
	big enough, there exist vectors $v_n\in V_{\rho_n}$ such that $\rho_n(q)v_n\in \span\{\rho_n(p_i)v_n\colon i=1,\ldots,k\}$.
\end{proof}

\subsection{Reflexive closures in solvable Lie algebras}
\label{sec-solv}

By Lie's theorem \cite[Theorem 9.11]{fh}, every irreducible representation $\pi$ of a finite-dimensional complex solvable Lie algebra $L$ is one-dimensional.
It follows that $\pi$ annihilates $L_1:=[L,L]$, hence it factors through the 
abelian Lie algebra $L/L_1$. Let $R$ be the left (equivalently the right) ideal of $U(L)$
generated by $L_1$. By \cite[Proposition 2.2.14]{dix}, the canonical homomorphism
from $U(L)$ to $U(L/L_1)$ is surjective with kernel $R$ and so $U(L)/R \cong U(L/L_1)$.
Clearly, every irreducible representation of $U(L)$ factors through $U(L)/
R$.

\begin{thm}
Let $L$ be a finite-dimensional complex solvable Lie algebra and $R$ the two-sided ideal of $U(L)$ generated by $L_1=[L,L]$.
Pick $p_1,\ldots,p_k,q \in U(L)$ and write $I$ for the two-sided ideal of $U(L)$ generated by $p_1,\ldots,p_k$.
The following are equivalent:
\begin{enumerate}
\item\label{solv-pt1} For some $n \in \NN$ we have that $q^n \in I+R$.
\item\label{solv-pt2} Every irreducible representation of $U(L)$ which anihilates $p_1,\ldots,p_k$ also annihilates $q$.
\item\label{solv-pt3} $q \in \loc_\I\{p_1,\ldots,p_k\}$.
\item\label{solv-pt4} $q \in \rf_\I\{p_1,\ldots,p_k\}$.
\end{enumerate}
\end{thm}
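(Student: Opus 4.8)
The plan is to prove the cycle $\eqref{solv-pt1}\Rightarrow\eqref{solv-pt3}\Rightarrow\eqref{solv-pt4}\Rightarrow\eqref{solv-pt2}\Rightarrow\eqref{solv-pt1}$. The middle implication $\eqref{solv-pt3}\Rightarrow\eqref{solv-pt4}$ is immediate from the general inclusion $\loc_\C\subseteq\rf_\C$. The implication $\eqref{solv-pt4}\Rightarrow\eqref{solv-pt2}$ is also easy: if $\pi$ is a one-dimensional irreducible representation annihilating $p_1,\ldots,p_k$, then $\span\{\pi(p_1)v,\ldots,\pi(p_k)v\}=\{0\}$ for the (one-dimensional space of) vectors $v$, so $q\in\rf_\I\{p_1,\ldots,p_k\}$ forces $\pi(q)v=0$, i.e.\ $\pi(q)=0$.

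For $\eqref{solv-pt1}\Rightarrow\eqref{solv-pt3}$: suppose $q^n\in I+R$. Given $\pi\in\I$, which is one-dimensional by Lie's theorem, $\pi$ factors through $U(L/L_1)$, hence $\pi(R)=0$; and $\pi(I)$ is the ideal of the commutative algebra $\pi(U(L))\cong\CC$ generated by $\pi(p_1),\ldots,\pi(p_k)$, so it is either $\{0\}$ or all of $\CC$. If $\pi(I)=\{0\}$ then $\pi(q)^n=\pi(q^n)=0$ forces $\pi(q)=0\in\span\{\pi(p_i)\}$; if $\pi(I)=\CC$ then some $\pi(p_i)\ne0$ and $\span\{\pi(p_1),\ldots,\pi(p_k)\}=\CC\ni\pi(q)$. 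Either way $q\in\loc_\I\{p_1,\ldots,p_k\}$.

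The substantive implication is $\eqref{solv-pt2}\Rightarrow\eqref{solv-pt1}$, and this is where I expect the real work. The idea is to pass to the commutative quotient: as noted before the theorem, $U(L)/R\cong U(L/L_1)$ is a polynomial ring $\CC[X_1,\ldots,X_r]$ where $r=\dim L/L_1$, and every irreducible representation of $U(L)$ is precisely an evaluation homomorphism $\ev_a\colon\CC[X_1,\ldots,X_r]\to\CC$ at a point $a\in\CC^r$, composed with the canonical surjection. Let $\bar I$ be the image of $I+R$ in $\CC[X_1,\ldots,X_r]$ — this is the ideal generated by the images $\bar p_1,\ldots,\bar p_k$ — and let $\bar q$ be the image of $q$. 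Hypothesis \eqref{solv-pt2} says exactly that $\bar q$ vanishes at every common zero of $\bar p_1,\ldots,\bar p_k$, i.e.\ $\bar q\in\sqrt{\bar I}$ by Hilbert's Nullstellensatz over $\CC$. Hence $\bar q^{\,n}\in\bar I$ for some $n$, which lifts to $q^n\in I+R$, giving \eqref{solv-pt1}. The main obstacle to check carefully is the identification of $\I$ with the set of evaluation maps and the verification that $q^n\in I+R$ genuinely follows from $\bar q^{\,n}\in\bar I$ (i.e.\ that the preimage of $\bar I$ under $U(L)\to U(L)/R$ is $I+R$, which holds because $R\subseteq I+R$ and the image of $I+R$ is $\bar I$); once the Nullstellensatz is in place the rest is routine.
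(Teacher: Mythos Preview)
Your proposal is correct and follows essentially the same approach as the paper: Lie's theorem reduces everything to one-dimensional representations factoring through the polynomial algebra $U(L)/R\cong U(L/L_1)$, Hilbert's Nullstellensatz gives the equivalence of \eqref{solv-pt1} and \eqref{solv-pt2}, and the remaining equivalences among \eqref{solv-pt2}, \eqref{solv-pt3}, \eqref{solv-pt4} all reduce to the trivial observation that for scalars $\beta,\alpha_1,\ldots,\alpha_k\in\CC$ one has $\beta\in\span\{\alpha_1,\ldots,\alpha_k\}$ iff ($\alpha_1=\cdots=\alpha_k=0$ implies $\beta=0$). The only difference is organizational---you run a cycle $\eqref{solv-pt1}\Rightarrow\eqref{solv-pt3}\Rightarrow\eqref{solv-pt4}\Rightarrow\eqref{solv-pt2}\Rightarrow\eqref{solv-pt1}$ whereas the paper proves the pairwise equivalences $\eqref{solv-pt1}\Leftrightarrow\eqref{solv-pt2}$, $\eqref{solv-pt2}\Leftrightarrow\eqref{solv-pt3}$, $\eqref{solv-pt3}\Leftrightarrow\eqref{solv-pt4}$---and you spell out the preimage check for $I+R$ a bit more explicitly.
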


\begin{proof}
The equivalence of \eqref{solv-pt1} and \eqref{solv-pt2} follows from Hilbert's Nullsellensatz and $U(L)/R \cong U(L/L_1)$. Namely, since  $U(L/L_1)$  is isomorphic to a polynomial algebra,
the following are equivalent for any $p_1',\ldots,p_k',q' \in U(L/L_1)$:
\begin{itemize}
\item $q'$ belongs to the radical of the ideal generated by $p_1',\ldots,p_k'$.
\item Every character $\phi$ of $U(L/L_1)$ which anihilates $p_1',\ldots,p_k'$ also anihilates $q'$.
\end{itemize}

The equivalence of \eqref{solv-pt2} and \eqref{solv-pt3} follows from the trivial observation that
for complex numbers $\alpha_1,\ldots,\alpha_k,\beta$ we have that $\beta \in \span\{\alpha_1,\ldots,\alpha_k\}$
iff  $\alpha_1=\ldots=\alpha_k=0$ implies $\beta=0$. 

Since all irreducible representations are one-dimensional, \eqref{solv-pt3} is equivalent to \eqref{solv-pt4}.
\end{proof}

%

\section{Proof of Theorem \ref{mainthm3}}
\label{proof-of-Th3}

\subsection{$\I$-local directional linear dependence  in $\sl_3$}
\label{usl3-ind}

	The Lie algebra of all trace-zero complex $3 \times 3$ matrices is denoted by $\sl_3$. We refer the reader to \cite[Chapter 6]{hall} for the theory of representations
of $\sl_3$; here we write the basics.
The standard basis of $\sl_3$ is 
\begin{equation}\label{basis-of-sl-3}
\begin{split}
	X_1	&:=E_{12},\; X_2:=E_{23},\; X_3:=E_{13},\; Y_1:=E_{21},\; Y_2:=E_{32},\\
	Y_3	&:=E_{31},\; H_1:=E_{11}-E_{22},\; H_2:=E_{22}-E_{33}.
\end{split}
\end{equation}
We write $V_1=V_2=\CC^3$. Let $e_1,e_2,e_3$ be the standard basis of $V_1$ and let $f_1=e_3,f_2=-e_2,f_3=e_1$ be a basis of $V_2$.
The action of $\sl_3$ on $V_1$ is defined by $\pi_1(Z)v:=Zv$ and its action on $V_2$ is defined by $\pi_2(Z)v:=-Z^Tv$. 
(Note that $\pi_1$ is the standard representation and $\pi_2$ is its adjoint.)
For every $m_1,m_2 \in \NN$, we identify the $m_1$-th symmetric power $\Sym^{m_1}(V_1)$ of $V_1$ with the vector space of all homogeneous polynomials of degree $m_1$ in $e_1,e_2,e_3$. 
Similarly, we identify $\Sym^{m_2}(V_2)$ with the vector space of all homogeneous polynomials of degree $m_2$ in $f_1,f_2,f_3$.
Let $\psi_1$ be the representation of $\sl_3$ on $\Sym^{m_1}(V_1)$ defined by
	$$\psi_1(e_{i_1}e_{i_2} \cdots e_{i_m}):=\sum_{j=1}^{m} e_{i_1}\cdots e_{i_{j-1}} \pi_1(e_{i_j}) e_{i_{j+1}}\cdots e_{i_m}.$$
$\psi_2$ is defined analogously. The representations $\psi_1$ and $\psi_2$ are irreducible 
but their tensor product $\psi:=\psi_1 \otimes \psi_2$, defined by
	$$\psi(v_1 \otimes v_2):=\psi_1(v_1) \otimes v_2+v_1 \otimes \psi_2(v_2)$$
is not irreducible. Let $W$ be the subspace of $\Sym^{m_1}(V_1) \otimes \Sym^{m_2}(V_2)$ generated by all elements of the form
	\begin{equation} \label{irr-repr}
		v_{i,j,k}:=\psi(Y_1^i Y_2^j Y_3^k)(e_1^{m_1} \otimes f_1^{m_2}), \quad i,j,k \in \NN_0.
	\end{equation}
It turns out that $W$ is an invariant subspace for $\psi(\sl_3)$ and the subrepresentation $\pi_{m_1,m_2} := \psi|_W$
is irreducible. Recall that a \emph{weight} of a representation $\pi$ is a pair of integers $z_1$, $z_2$ such that $\pi(H_i)v=z_i v $ for $i=1,2$ where 
$v$ is some nonzero vector, called a \emph{weight vector}. The weight $(m_1,m_2)$ is the \emph{highest weight} if for every weight $(m_1',m_2')$ we have
	$$(m_1,m_2)-(m_1',m_2')=a(2,-1)+b(-1,2)$$ 
for some $a,b\geq 0$. The highest weight of the representation with the irreducible subspace generated by \eqref{irr-repr} is $(m_1,m_2)$
and its highest weight vector is $v:=e_1^{m_1} \otimes f_1^{m_2}.$ 

In order to prove an analogue of Proposition \ref{irr-repr-prop}, we start with the following proposition.

\begin{proposition} \label{nonzero-proposition}
	For every $d,m_1,m_2 \in \NN_0$ with $m_1 \ge d$ and $m_2 \ge d$, the vectors
		$v_{k,\ell,m}$ with $k,\ell,m\in \NN_0$ such that $k+\ell+m\leq d$ are linearly independent.
\end{proposition}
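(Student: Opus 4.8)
The plan is to exhibit the vectors $v_{k,\ell,m}$ explicitly as polynomials in $e_1,e_2,e_3,f_1,f_2,f_3$ and read off their linear independence by inspecting leading monomials. Since $v = e_1^{m_1}\otimes f_1^{m_2}$ is the highest weight vector, and $Y_1,Y_2,Y_3$ act as lowering operators, applying $\psi(Y_1^iY_2^jY_3^k)$ will strictly lower the weight in a controlled way. Concretely, one computes that $\psi(Y_1)$ replaces an $e_1$ by $e_2$ (in the first tensor factor) and an $f_?$-pattern in the second, $\psi(Y_2)$ replaces $e_2$ by $e_3$, and $\psi(Y_3)$ replaces $e_1$ by $e_3$; the precise action on $\Sym^{m_1}(V_1)\otimes\Sym^{m_2}(V_2)$ follows from the definitions of $\pi_1$, $\pi_2$ and the Leibniz rule for $\psi$. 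The first step I would carry out is therefore to derive closed (or at least leading-term) formulas for $\psi(Y_1^iY_2^jY_3^k)(e_1^{m_1}\otimes f_1^{m_2})$.

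The key observation will be that, after expanding, the term $v_{k,\ell,m}$ contains a monomial $e_1^{m_1-k-m}e_2^{k}e_3^{m}\otimes(\text{monomial in }f)$ with nonzero coefficient, and that distinct triples $(k,\ell,m)$ with $k+\ell+m\le d$ give rise to distinct monomials in the combined variables — here the index $\ell$ (the power of $Y_2$) is distinguished by the $f$-part, which is the reason $\psi_2$ (the adjoint representation) is tensored in rather than working inside $\Sym^{m_1}(V_1)$ alone. The hypotheses $m_1\ge d$ and $m_2\ge d$ are exactly what guarantee that none of these exponents becomes negative, so that all the relevant monomials genuinely appear and the coefficients are nonzero (products of positive integers bounded by $m_1$ or $m_2$). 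Given such a triangular/distinct-leading-monomial structure, linear independence is immediate: order the monomials, and a vanishing linear combination $\sum \lambda_{k,\ell,m}v_{k,\ell,m}=0$ forces each $\lambda_{k,\ell,m}=0$ by peeling off the monomial unique to $(k,\ell,m)$.

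The main obstacle I anticipate is bookkeeping: writing down $\psi(Y_1^iY_2^jY_3^k)$ on a symmetric power without sign errors, and then exhibiting a clean injective assignment $(k,\ell,m)\mapsto$ (monomial in $e$'s and $f$'s) that covers all of $k+\ell+m\le d$. The relation $Y_3 = [Y_1,Y_2]$ (up to sign) in $\sl_3$ means $Y_1,Y_2,Y_3$ do not act independently, so one must be careful about which monomials in the $Y$'s are retained; but since we only need a lower bound on what survives — one distinguished monomial per $v_{k,\ell,m}$ — it should suffice to track the single "extremal" term where each $Y_1$ lowers a fresh $e_1\to e_2$, each $Y_2$ lowers a fresh $e_2\to e_3$, and each $Y_3$ lowers a fresh $e_1\to e_3$, with the $f$-factors recording $\ell$ separately. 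If a direct monomial argument proves too delicate, a fallback is a weight-space dimension count together with the known basis $\{v_{i,j,k}\}$ of the irreducible $\pi_{m_1,m_2}$, showing the listed $v_{k,\ell,m}$ already span a space of the right dimension in low degrees; but I expect the explicit leading-monomial approach to be the cleanest.
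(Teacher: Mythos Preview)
Your proposal is correct and follows essentially the same route as the paper. The paper expands $v_{k,\ell,m}$ explicitly as
\[
v_{k,\ell,m}=\sum_{t=0}^{m}\sum_{s=0}^{k}\beta^{k,\ell,m}_{s,t}\,e_1^{m_1-s-t}e_2^{s}e_3^{t}\otimes f_1^{m_2+t-\ell-k}f_2^{\ell-k+s}f_3^{m+k-s-t},
\]
observes that the extremal coefficient $\beta^{k,\ell,m}_{k,m}=\binom{m_1}{m}\binom{m_2}{\ell}\binom{m_1-m}{k}$ is nonzero precisely because $m_1,m_2\ge d$, and then peels off coefficients by projecting onto the $(e_3^{a},e_2^{b},f_2^{c})$-component in decreasing lexicographic order of $(a,b,c)$ --- exactly the triangular leading-monomial argument you outlined, with the $f_2$-exponent recording $\ell$ as you anticipated. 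One small correction to your heuristic: since $Y_3^m$ is applied first and $Y_2^\ell$ next, there are no $e_2$'s present when $Y_2$ acts, so the $\ell$-dependence enters entirely through the action $\pi_2(Y_2)\colon f_1\mapsto f_2$ on the second tensor factor rather than through $e_2\to e_3$; but this does not affect your strategy.
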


\begin{proof}
Denote $S_d:=\{(k,\ell,m)\in \NN_0^3\colon k+\ell+m\leq d\}$ and assume that 
	\begin{equation}\label{eq-lin-ind}
		\sum_{(k,\ell,m)\in S_d} \alpha_{k,\ell,m} v_{k,\ell,m}=0
	\end{equation}
for some $\alpha_{k,\ell,m}\in \RR$.
We have to prove that each $\alpha_{k,\ell,m}$ is zero. 
After a short computation which depends on the formula
	\begin{equation*}
		\pi_{m_1,m_2}(Y_i^j)(u_1 \otimes u_2) =\sum_{q=0}^j \binom{j}{q} \psi_1(Y_i^q)u_1 \otimes \psi_2(Y_i^{j-q})u_2
	\end{equation*}
for each $i$ and $j$ we get that
	\begin{equation}\label{allveceq}
		v_{k,\ell,m}=
			\sum_{t=0}^m \sum_{s=0}^k \beta_{s,t}^{k,\ell,m} e_1^{m_1-s-t} e_2^s e_3^t \otimes f_1^{m_2+t-\ell-k} f_2^{\ell-k+s} f_3^{m+k-s-t} 	
	\end{equation}
where $\beta_{s,t}^{k,\ell,m}\in \RR$ and in particular $\beta_{k,m}^{k,\ell,m}=\binom{m_1}{m}\binom{m_2}{\ell}\binom{m_1-m}{k}\neq 0.$
For $a,b,c\in \NN_{0}$ we denote by $P_{a,b,c}$ the projection to the linear subspace $\Lin\{e_{1}^{i_1}e_2^be_3^a\otimes f_1^{i_2}f_2^{c}f_3^{i_3}\colon i_j\in \NN_0\}.$
Applying projections $P_{a,b,c}$ repeatedly in the lexicographic ordering of indices $(a,b,c)$ where $(b,c,a)\in S_d$ on \eqref{eq-lin-ind} and using \eqref{allveceq} we deduce inductively
that each $\alpha_{k,\ell,m}$ in \eqref{eq-lin-ind} is zero. 
Namely, first
	$$0=P_{d,0,0}\Big(\sum_{(k,\ell,m)\in S_d} \alpha_{k,\ell,m} v_{k,\ell,m}\Big)=\alpha_{0,0,d}\cdot \beta_{0,d}^{0,0,d} e_1^{m_1-d}e_3^d\otimes f_1^{m_2}$$
implies that $\alpha_{0,0,d}=0$ (since $\beta_{0,d}^{0,0,d}\neq 0$).
Now fix $(a_0,b_0,c_0)$ and assume that $\alpha_{b,c,a}=0$ for all $(a,b,c)\succ_{\mathrm{lex}} (a_0,b_0,c_0)$. Then	
	\begin{eqnarray*}
		0	&=&	P_{a_0,b_0,c_0}\Big(\sum_{(k,\ell,m)\in S_d} \alpha_{k,\ell,m} v_{k,\ell,m}\Big)\\
			&=&	\alpha_{b_0,c_0,a_0}\cdot \beta_{b_0,a_0}^{b_0,c_0,a_0} e_1^{m_1-b_0-a_0}e_2^{b_0}e_3^{a_0}\otimes f_1^{m_2-c_0}f_2^{c_0}
	\end{eqnarray*}
implies that $\alpha_{b_0,c_0,a_0}=0$ (since $\beta_{b_0,a_0}^{b_0,c_0,a_0}\neq 0$).
\end{proof}


\begin{lemma}\label{sl3-gen-on-v_klj-v2}
	For every $d,m_1,m_2 \in \NN_0$ with $m_1 \ge d$ and $m_2 \ge d$, generators of $\sl_3$ map vectors $v_{k,\ell,m}$, $k,\ell,m\in \NN_0$, 
	by the following rules:
	\begin{eqnarray}
		\pi_{m_1,m_2}(H_1) v_{k,\ell,m} &=& \alpha v_{k,\ell,m},							\label{sl3-gen-on-v_kl-eq7-v2}\\
		\pi_{m_1,m_2}(H_2) v_{k,\ell,m} &=& \beta   v_{k,\ell,m},							\label{sl3-gen-on-v_kl-eq8-v2}\\
		\pi_{m_1,m_2}(Y_1) v_{k,\ell,m} &=& v_{k+1,\ell,m},										\label{sl3-gen-on-v_kl-eq4-v2}\\
		\pi_{m_1,m_2}(Y_2) v_{k,\ell,m} &=& v_{k,\ell+1,m} + k v_{k-1,\ell,m+1},						\label{sl3-gen-on-v_kl-eq5-v2}\\
		\pi_{m_1,m_2}(Y_3) v_{k,\ell,m} &=& v_{k,\ell,m+1},										\label{sl3-gen-on-v_kl-eq6-v2}\\
		\pi_{m_1,m_2}(X_1) v_{k,\ell,m} &=& \gamma  v_{k-1,\ell,m}-mv_{k,\ell+1,m-1}					\label{sl3-gen-on-v_kl-eq1-v2}\\
		\pi_{m_1,m_2}(X_2) v_{k,\ell,m} &=& \delta v_{k,\ell-1,m}+m v_{k+1,\ell,m-1}.			\label{sl3-gen-on-v_kl-eq2-v2}\\
		\pi_{m_1,m_2}(X_3) v_{k,\ell,m} &=& \xi v_{k-1,\ell-1,m}+\zeta v_{k,\ell,m-1}				\label{sl3-gen-on-v_kl-eq3-v2},
	\end{eqnarray}
	where
	\begin{multicols}{2}
		\begin{equation*}
			\begin{aligned}
				\alpha	&=	(m_1 -2k + \ell-m),\\
				\gamma	&=	k(m_1-k+1+\ell-m),\\
				\xi		&=	-k\ell(m_2-\ell+1),
			\end{aligned}
		\end{equation*}
		\vfill
	\columnbreak
		\begin{equation*}
			\begin{aligned}		
				\beta		&=	(m_2 +k -2\ell-m),\\
				\delta	&=	\ell (m_2-\ell+1),\\
				\zeta		&=	m(m_1+m_2+1-\ell-k-m).
			\end{aligned}
		\end{equation*}
	\end{multicols}
\end{lemma}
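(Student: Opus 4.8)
The plan is to prove each of the eight formulas by a direct computation starting from the definition $v_{k,\ell,m}=\psi(Y_1^iY_2^jY_3^k)(e_1^{m_1}\otimes f_1^{m_2})$, but organized so that as few of them as possible require brute force. First I would establish the two easy ones: formulas \eqref{sl3-gen-on-v_kl-eq4-v2} and \eqref{sl3-gen-on-v_kl-eq6-v2}. Since $[Y_1,Y_3]=0$ in $\sl_3$ (indeed $[E_{21},E_{31}]=0$) and $[Y_1,Y_1]=0$, the operator $\pi_{m_1,m_2}(Y_1)$ commutes past the $Y_1$'s and $Y_3$'s already present in $v_{k,\ell,m}$; pushing it to the left of the string and recalling $W$ is generated by the $v_{k,\ell,m}$ in the prescribed order $Y_1^kY_2^\ell Y_3^m$ gives $\pi_{m_1,m_2}(Y_1)v_{k,\ell,m}=v_{k+1,\ell,m}$ directly, provided I first normalize the generating set so that every monomial $Y_1^kY_2^\ell Y_3^m$ applied to the highest weight vector really does equal $v_{k,\ell,m}$ (this uses $[Y_2,Y_3]=0$ as well, since $[E_{32},E_{31}]=0$, to move $Y_3$ to the right). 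The relation $\eqref{sl3-gen-on-v_kl-eq6-v2}$ is then immediate because $Y_3$ commutes with everything in the string. For the weight relations $\eqref{sl3-gen-on-v_kl-eq7-v2}$ and $\eqref{sl3-gen-on-v_kl-eq8-v2}$ I would observe that $v_{k,\ell,m}$ is, by the explicit expansion \eqref{allveceq} from Proposition \ref{nonzero-proposition}, a sum of monomials all lying in the single weight space; so it suffices to compute the $H_1,H_2$ eigenvalue on any one summand, e.g. $e_1^{m_1-s-t}e_2^se_3^t\otimes f_1^{m_2+t-\ell-k}f_2^{\ell-k+s}f_3^{m+k-s-t}$, using $\pi_1(H_i)$ and $\pi_2(H_i)$ on the weight vectors $e_j,f_j$, and one checks the result is $(m_1-2k+\ell-m)$ and $(m_2+k-2\ell-m)$ respectively, independent of $s,t$. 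Alternatively, and more cleanly, one notes that each application of $Y_1$ shifts the weight by $-(2,-1)$, each $Y_2$ by $-(-1,2)$, each $Y_3$ by $-(1,1)$, starting from highest weight $(m_1,m_2)$, which gives $\alpha$ and $\beta$ with no computation at all.

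Next I would handle $\eqref{sl3-gen-on-v_kl-eq5-v2}$, the action of $Y_2$, which is the first genuinely nontrivial relation. Here $[Y_1,Y_2]=[E_{21},E_{32}]=0$ but wait — one must be careful: in $\sl_3$ we have $[E_{21},E_{32}]=0$ only if the matrix indices don't chain, and $E_{21}E_{32}=0$, $E_{32}E_{21}=E_{31}$, so $[E_{21},E_{32}]=-E_{31}=-Y_3$. Hence moving $Y_2$ leftward past the $k$ copies of $Y_1$ in $v_{k,\ell,m}=\pi(Y_1^kY_2^\ell Y_3^m)v$ produces, by the standard commutator-expansion (Leibniz-type) identity, the term $v_{k,\ell+1,m}$ from the "no commutator" term plus $k$ copies of the term where one $[Y_2,Y_1]=-[Y_1,Y_2]=Y_3$ replaces a $Y_1Y_2$ pair, i.e. $-k\cdot\pi(Y_1^{k-1}(-Y_3)Y_2^\ell Y_3^m)v$... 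I would carefully recompute the sign and confirm it yields $+k\,v_{k-1,\ell,m+1}$, reordering the $Y_3$ to the right using $[Y_1,Y_3]=[Y_2,Y_3]=0$. The remaining four relations $\eqref{sl3-gen-on-v_kl-eq1-v2}$, $\eqref{sl3-gen-on-v_kl-eq2-v2}$, $\eqref{sl3-gen-on-v_kl-eq3-v2}$ — the $X_i$ actions — are the heart of the computation. For these I would use the commutator relations $[X_1,Y_1]=H_1$, $[X_2,Y_2]=H_2$, $[X_3,Y_3]=H_1+H_2$, $[X_1,Y_2]=0$ wait again I should check: $[E_{12},E_{32}]=-E_{... }$; actually $E_{12}E_{32}=0$ and $E_{32}E_{12}=E_{...}$, no: row/column matching gives $[E_{12},E_{32}]=0$, $[E_{12},E_{31}]=0$, $[E_{12},E_{21}]=H_1$, $[E_{23},E_{21}]= -E_{...}$ hmm $E_{23}E_{21}=0$, $E_{21}E_{23}=0$ so $[X_2,Y_1]=0$; $[X_2,Y_3]=[E_{23},E_{31}]=E_{21}=Y_1$ wait $E_{23}E_{31}=E_{21}$ and $E_{31}E_{23}=0$ so $[X_2,Y_3]=E_{21}=Y_1$; and so on. I would assemble the complete table of $[X_i,Y_j]$ brackets in $\sl_3$ first, then for each $X_i$ apply it to $v_{k,\ell,m}=\pi(Y_1^kY_2^\ell Y_3^m)v$ by commuting $X_i$ rightward through the string using the Leibniz identity $X(Y_1\cdots Y_N)v = \sum_j Y_1\cdots [X,Y_j]\cdots Y_N v + (Y_1\cdots Y_N)(Xv)$, together with the fact that $X_i v = 0$ (highest weight vector is killed by all raising operators) and that $H_i v_{k,\ell,m}$ is a known scalar multiple by the already-proven weight relations. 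Each commutator $[X_i,Y_j]$ is either $0$, a Cartan element $H_r$ (contributing a scalar times a shorter/same string after reordering), or a lowering operator $Y_r$ (contributing another $v$-vector of adjacent index).

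The main obstacle will be the bookkeeping in the $X_i$ computations: after inserting a commutator $H_r$ or $Y_r$ in the middle of the string $Y_1^kY_2^\ell Y_3^m$, one must reorder back into the canonical form $Y_1^{k'}Y_2^{\ell'}Y_3^{m'}$, which itself generates further commutators (e.g. moving an inserted $Y_3$ leftward past $Y_1$'s and $Y_2$'s is fine since $Y_3$ is central in the lower-triangular nilpotent subalgebra, but moving an inserted $H_r$ past the $Y_j$'s produces scalar corrections because $[H_r,Y_j]$ is a nonzero multiple of $Y_j$), and when an $H_r$ reaches the highest weight vector $v$ it acts by the scalar $m_i$. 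Keeping all these scalars straight — and in particular getting the coefficients $\gamma=k(m_1-k+1+\ell-m)$, $\delta=\ell(m_2-\ell+1)$, $\xi=-k\ell(m_2-\ell+1)$, $\zeta=m(m_1+m_2+1-\ell-k-m)$ exactly right, including the $m$-dependent "cross terms" $-m\,v_{k,\ell+1,m-1}$ and $+m\,v_{k+1,\ell,m-1}$ — is where all the real work lies. A useful sanity check I would run at the end is to verify that these formulas are consistent with the $\sl_3$ relations on $W$, e.g. $[\pi(X_1),\pi(Y_1)]=\pi(H_1)$ and $[\pi(Y_1),\pi(Y_2)]=-\pi(Y_3)$, applied to a general $v_{k,\ell,m}$; matching coefficients on both sides pins down any stray sign or off-by-one error. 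The case hypothesis $m_1\ge d$, $m_2\ge d$ enters only to guarantee (via Proposition \ref{nonzero-proposition}) that the $v_{k,\ell,m}$ with $k+\ell+m\le d$ are linearly independent, so that the coefficient-matching is legitimate and the formulas are not merely one representative of an equivalence class; for the computation of the action itself the formulas hold for all $m_1,m_2$, with the understanding that $v_{k,\ell,m}$ with any negative index is interpreted as $0$.
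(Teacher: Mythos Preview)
Your proposal is correct and follows essentially the same approach as the paper: the easy formulas for $Y_1,Y_3$ come from commutation, the $H_i$ formulas from the root/weight shift of the lowering operators (the paper invokes Proposition~\ref{nonzero-proposition} to ensure $v_{k,\ell,m}\ne 0$, just as you note at the end), and the $Y_2,X_1,X_2,X_3$ formulas are obtained by pushing the generator through the string $Y_1^kY_2^\ell Y_3^m$ via commutation relations in $U(\sl_3)$. The only organizational difference is that the paper packages this last step as an induction on lexicographically increasing triples $(k,\ell,m)$---reducing each case to the next-smaller one by a single commutator identity such as $X_1Y_1=Y_1X_1+H_1$ or $Y_2Y_1=Y_1Y_2+Y_3$---whereas you propose to do the full Leibniz expansion in one pass; the induction is simply the unrolled form of your expansion and sidesteps some of the bookkeeping you flag as the main obstacle.
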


\begin{proof}
	We write $\pi:=\pi_{m_1,m_2}$.
		Equalities \eqref{sl3-gen-on-v_kl-eq7-v2} and \eqref{sl3-gen-on-v_kl-eq8-v2} follow by the following facts:  
	\begin{itemize}
		\item $v_{0,0,0}$ is a weight vector corresponding to the weight $(m_1,m_2)$.
		\item $\pi(Y_1)$, $\pi(Y_2)$, $\pi(Y_3)$  are root vectors corresponding to 
			the roots $(-2,1)$, $(1,-2)$, $(-1,-1)$.
		\item A vector $v_{k,\ell,m}$ is nonzero by Proposition \ref{nonzero-proposition}.
	\end{itemize}
	 
	 The equality \eqref{sl3-gen-on-v_kl-eq4-v2} is clear while  \eqref{sl3-gen-on-v_kl-eq6-v2} follows by the fact that $Y_3$ commutes with 
	 $Y_1$ and $Y_2$ in $U(\sl_3)$.
	
	The remaining equalities can be proved by induction on lexicographically increasing triples $(k,\ell,m)$.
	For examples we will prove \eqref{sl3-gen-on-v_kl-eq5-v2} and \eqref{sl3-gen-on-v_kl-eq1-v2}.
	
	The base of induction $(k,\ell,m)=(0,0,0)$ for \eqref{sl3-gen-on-v_kl-eq5-v2} is established
	 by calculating $\pi(Y_2) v_{0,0,0} = v_{0,1,0}.$ Now fix a triple $(k_0,\ell_0,m_0)$ and assume that \eqref{sl3-gen-on-v_kl-eq5-v2} is true for every triple 
	 $(k,\ell,m)$ such that $(k_0,\ell_0,m_0)\succ_{\mathrm{lex}} (k,\ell,m)$. We separate two cases: \\
	
	 \noindent \textbf{Case 1:} $k_0>0$. By  the relation $Y_2Y_1=Y_1Y_2+Y_3$ from $U(\sl_3)$ and the fact that $Y_3$ commutes with $Y_1$ and $Y_2$ we have that
		$$\pi(Y_2) v_{k_0,\ell_0,m_0}	=	\pi(Y_1)\pi(Y_2) v_{k_0-1,\ell_0,m_0}+ v_{k_0-1,\ell_0,m_0+1},$$
	Now we use the induction hypothesis for $(k_0-1,\ell_0,m_0)$ and get
		$$\pi(Y_2) v_{k_0,\ell_0,m_0}	=	v_{k_0,\ell_0+1,m_0}+k_0v_{k_0-1,\ell_0,m_0+1}.$$	
		
	 \noindent \textbf{Case 2:} $k_0=0$.	We have $\pi(Y_2) v_{0,\ell_0,m_0}=v_{0,\ell_0+1,m_0}$ which is \eqref{sl3-gen-on-v_kl-eq5-v2}.\\
	 
	Now we prove \eqref{sl3-gen-on-v_kl-eq1-v2}. The base of induction $(k,\ell,m)=(0,0,0)$ is established
	 by calculating 
	 	$$\pi(X_1) v_{0,0,0} = \psi_1(X_1)e_1^{m_1} \otimes f_1^{m_2}+e_1^{m_1} \otimes \psi_1(X_1)f_1^{m_2}=0.$$
	 Now fix a triple $(k_0,\ell_0,m_0)$ and assume that \eqref{sl3-gen-on-v_kl-eq1-v2} is true for every triple 
	 $(k,\ell,m)$ such that $(k_0,\ell_0,m_0)\succ_{\mathrm{lex}} (k,\ell,m)$. We separate three cases: \\
	
	 \noindent \textbf{Case 1:} $k_0>0$. By the relation $X_1Y_1=Y_1X_1+H_1$ from $U(\sl_3)$ we have that
		$$\pi(X_1) v_{k_0,\ell_0,m_0}	=	\pi(Y_1)\pi(X_1) v_{k_0-1,\ell_0,m_0}+ \pi(H_1)v_{k_0-1,\ell_0,m_0},$$
	Now we use the induction hypothesis for $(k_0-1,\ell_0,m_0)$ for the first term, the equality \eqref{sl3-gen-on-v_kl-eq7-v2} for the second term and after a short calculation
	we get \eqref{sl3-gen-on-v_kl-eq1-v2}.\\
			
	 \noindent \textbf{Case 2:} $k_0=0$, $\ell_0>0$. By the relation $X_1Y_2=Y_2X_1$ from $U(\sl_3)$ we have that
		$$\pi(X_1) v_{0,\ell_0,m_0}	=	\pi(Y_2)\pi(X_1) v_{0,\ell_0-1,m_0},$$
	and by the induction hypothesis for $(0,\ell_0-1,m_0)$ we get \eqref{sl3-gen-on-v_kl-eq1-v2}.\\
	
	\noindent \textbf{Case 3:} $k_0=0$, $\ell_0=0$, $m_0>0$. By the relation $X_1Y_3=Y_3X_1-Y_2$ from $U(\sl_3)$ we have that
		$$\pi(X_1) v_{0,0,m_0}	=	\pi(Y_3)\pi(X_1) v_{0,0,m_0-1}-v_{0,1,m_0-1},$$
	and by the induction hypothesis for $(0,0,m_0-1)$ we get \eqref{sl3-gen-on-v_kl-eq1-v2}.
	\end{proof}

\begin{proposition}\label{propklm}
	For every $d,m_1,m_2 \in \NN_0$ with $m_1,m_2$ big enough, the vectors 
		\begin{equation}\label{eqpropklm}
			\pi_{m_1,m_2}(Y_1^{j_1} Y_2^{j_2} Y_3^{j_3} X_1^{\ell_1}X_2^{\ell_2}X_3^{\ell_3} H_1^{r_1} H_2^{r_2})
				\Big(\sum_{t=1}^{L} v_{k(t),\ell(t),m(t)} \Big),
		\end{equation}
	are linearly independent,	
	where the powers $j_1,j_2,j_3,\ell_1,\ell_2,\ell_3,r_1,r_2 \in \NN_0$ are such that $\displaystyle \sum_{i=1}^3 j_i+\sum_{i=1}^3 \ell_i+\sum_{i=1}^2 r_i \le d$, $j_2 \ell_2=0$, $r_2 \le 2$ and
	the indices $k(t),\ell(t),m(t)$ for $t=1,\ldots, L$, with $L:=4d^3+4d^2+2d+1$, are defined by
	$$k(t)=(3d+1)t, \quad \ell(t)=t^{2d+1}, \quad m(t)=t^{4d^2+2d+1}.$$
\end{proposition}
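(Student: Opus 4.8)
\emph{Strategy.} The statement is the $\sl_3$-analogue of Proposition~\ref{irr-repr-prop}, and I would prove it by the same three-part scheme: (i) expand every $\pi_{m_1,m_2}(g)v_{k,\ell,m}$ in the $v_{k',\ell',m'}$-basis using Lemma~\ref{sl3-gen-on-v_klj-v2}; (ii) exploit the rapid growth of $k(t),\ell(t),m(t)$ to separate the contributions coming from different $t$; (iii) peel off the admissible monomials $g=Y_1^{j_1}Y_2^{j_2}Y_3^{j_3}X_1^{\ell_1}X_2^{\ell_2}X_3^{\ell_3}H_1^{r_1}H_2^{r_2}$ one at a time by looking at a suitable ``leading'' target vector and invoking a Vandermonde/moment-curve argument. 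For Step (i): iterating Lemma~\ref{sl3-gen-on-v_klj-v2} gives, for each admissible $g$ with $\deg g\le d$, finitely many $\delta\in\ZZ^3$ with $\|\delta\|_\infty\le d$ and polynomials $P^g_\delta\in\CC[m_1,m_2][k,\ell,m]$ so that $\pi_{m_1,m_2}(g)v_{k,\ell,m}=\sum_\delta P^g_\delta(k,\ell,m)\,v_{(k,\ell,m)+\delta}$; all indices that appear lie in a box whose size depends only on $d$ and on $\max_t(k(t),\ell(t),m(t))$, so the proof of Proposition~\ref{nonzero-proposition} (whose key coefficient $\binom{m_1}{m}\binom{m_2}{\ell}\binom{m_1-m}{k}$ stays nonzero) shows that, for $m_1,m_2$ large in terms of $d$, the $v$'s occurring are nonzero and linearly independent. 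It is useful to note that, since the $Y_i,X_i$ are root vectors, all $v_{(k,\ell,m)+\delta}$ in a fixed $\pi_{m_1,m_2}(g)v_{k,\ell,m}$ have the same $\sl_3$-weight; hence the admissible $\delta$'s lie on a single line $\delta_0(g)+\ZZ(1,1,-1)$, and passing from the $\gamma,\delta,\xi$-branch to the other branch of $X_1,X_2,X_3$ (resp.\ the two branches of $Y_2$) always shifts the parameter on this line by $\pm(1,1,-1)$ in a prescribed direction.

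\emph{Step (ii): separation.} Since $k(t+1)-k(t)=3d+1>2d$ (and, for $t\ge1$, $\ell(t+1)-\ell(t)\ge 2d+1>2d$ and $m(t+1)-m(t)\ge 4d^2+2d+1>2d$ as well), the translated boxes $\{(k(t),\ell(t),m(t))+\delta:\|\delta\|_\infty\le d\}$, $t=1,\dots,L$, are pairwise disjoint. Consequently, if $\sum_g c_g\,\pi_{m_1,m_2}(g)\bigl(\sum_{t=1}^L v_{k(t),\ell(t),m(t)}\bigr)=0$, then comparing the coefficient of each basis vector $v_{(k(t),\ell(t),m(t))+\delta}$ gives
\begin{equation*}
\sum_g c_g\,P^g_\delta\bigl(k(t),\ell(t),m(t)\bigr)=0\qquad\text{for every }t=1,\dots,L\text{ and every }\delta\in\ZZ^3 .
\end{equation*}

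\emph{Step (iii): peeling.} For each admissible $g$ choose its \emph{extremal shift} $\delta^\ast(g)$: the unique $\delta$ on the line $\delta_0(g)+\ZZ(1,1,-1)$ at which the parameter is minimal, i.e.\ realized by the branch pattern $X_1\!\to\!\gamma$, $X_2\!\to\!\delta$, $X_3\!\to\!\xi$, $Y_2\!\to\!k$. Because this pattern is unique, $P^g_{\delta^\ast(g)}=\alpha^{r_1}\beta^{r_2}\cdot\bigl(\text{product of }\gamma\text{'s},\delta\text{'s},\xi\text{'s and }k\text{'s along the way}\bigr)$, a nonzero polynomial for $m_1,m_2$ large, whose degree in $k,\ell,m$ is at most $(2d,2d,d)$; read along the curve $t\mapsto\bigl((3d+1)t,\,t^{2d+1},\,t^{4d^2+2d+1}\bigr)$ its $t$-degree is at most $d(4d^2+2d+2)<L=4d^3+4d^2+2d+1$ — this is precisely what fixes the value of $L$ and the exponents $1,2d+1,4d^2+2d+1$ — while on the monomials actually occurring the substitution $(k,\ell,m)\mapsto\bigl((3d+1)t,t^{2d+1},t^{4d^2+2d+1}\bigr)$ is injective. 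Ordering the admissible monomials so that every $g'\ne g$ with $P^{g'}_{\delta^\ast(g)}\not\equiv0$ comes before $g$, and running downward induction on this order: assuming $c_{g'}=0$ for all larger $g'$, the equation from Step (ii) with $\delta=\delta^\ast(g)$, evaluated at $t=1,\dots,L$, upgrades to the polynomial identity $c_g\,P^g_{\delta^\ast(g)}=0$; since $P^g_{\delta^\ast(g)}\not\equiv0$ we get $c_g=0$, and going down the order kills every $c_g$, which is the assertion.

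\emph{Where the difficulty is.} The genuine work — and the only place the hypotheses $j_2\ell_2=0$ and $r_2\le 2$ are used — is the construction of the order in Step (iii): one must show that $g\mapsto\bigl(\text{weight }\mu(g),\ \delta^\ast(g),\ P^g_{\delta^\ast(g)}\bigr)$ is ``triangular'' on admissible monomials, equivalently that $\delta^\ast(g)$ together with the leading coefficient isolates $g$ among all competitors that can hit $\delta^\ast(g)$, and that those competitors with larger $t$-degree have already been eliminated. The conditions $j_2\ell_2=0$ (no simultaneous $Y_2$ and $X_2$) and $r_2\le2$ are exactly what prevent two distinct admissible monomials from sharing both weight and leading term — they say that the $g$'s form a PBW-type free basis of $U(\sl_3)$ over its centre $\CC[C_2,C_3]$ — and without them the scheme collapses. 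The remaining ingredients (the degree estimates for $P^g_{\delta^\ast(g)}$, and the injectivity of the moment substitution on the monomials that occur, which rules out coincidences such as $\ell^{2d}$ versus $m$) are lengthy but routine once the extremal branch of Lemma~\ref{sl3-gen-on-v_klj-v2} is chosen at every step.
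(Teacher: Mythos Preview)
Your overall architecture matches the paper's: expand via Lemma~\ref{sl3-gen-on-v_klj-v2}, separate the $t$-blocks, then isolate one admissible monomial at a time and finish with a moment-curve/Vandermonde argument. The gap is in Step~(iii), and it is not merely a matter of bookkeeping.

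Your extremal shift $\delta^\ast(g)$ is the $s=0$ term in the paper's parametrisation, and within a fixed weight class $\Lambda_{\vec e}$ it depends only on $\ell_2+\ell_3$. Hence many admissible monomials share the same $\delta^\ast$, and your proposed order (``$g'$ comes before $g$ whenever $P^{g'}_{\delta^\ast(g)}\not\equiv 0$'') is not well-defined. Concretely, take $g_1=Y_1^{j_1}Y_2\,Y_3^{j_3}H_1$ and $g_2=Y_1^{j_1}Y_3^{j_3+1}X_1$: both are admissible, lie in the same $\Lambda_{\vec e}$, and have the identical $\delta^\ast=(j_1-1,\,0,\,j_3+1)$. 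One computes
\[
P^{g_1}_{\delta^\ast}=k(m_1-2k+\ell-m),\qquad P^{g_2}_{\delta^\ast}=k(m_1-k+1+\ell-m),
\]
and both have the same leading $t$-monomial $-km$ (even with the same coefficient), so neither can be ``peeled off'' before the other. Thus the triangularity you defer as ``lengthy but routine'' actually fails for the $s=0$ branch you selected; the conditions $j_2\ell_2=0$ and $r_2\le 2$ do not repair this, since neither $g_1$ nor $g_2$ touches those constraints.

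The paper's remedy is precisely to \emph{not} fix $s$. It introduces the linear change $x=k-2\ell-m$, $y=-k+\ell-m$, $z=-2k+\ell-m$, keeps the shift operator $S$ as a fourth variable, and works with the leading term of the full polynomial $C_{\vec{j},\vec{\ell},\vec{r}}(k,\ell,m,S)$ in the lexicographic order $x\succ y\succ z\succ S$. That leading term is (a scalar times)
\[
x^{\,\ell_1+2(\ell_2+\ell_3)+r_2}\,y^{\,j_2+\ell_3}\,z^{\,r_1}\,S^{\,\ell_1},
\]
and the paper's Claim~1 shows, via a $\bmod\ 3$ argument that genuinely uses $r_2\le 2$ and $j_2\ell_2=0$, that these leading exponents determine $(j_2,\ell_1,\ell_2,\ell_3,r_1,r_2)$ uniquely. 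The $S$-exponent $\ell_1$ is exactly the datum your $s=0$ choice throws away; in the example above the paper's leading terms are $yz$ for $g_1$ and $xS$ for $g_2$, which are distinct. So the ``genuine work'' you flag is done in the paper not by choosing a single extremal branch but by letting the branch vary and ordering in $(x,y,z,S)$; with that change your scheme becomes the paper's proof.
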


\begin{proof}
We write $\vec{Y}:=(Y_1,Y_2,Y_3)$, $\vec{X}:=(X_1,X_2,X_3)$, $\vec{H}:=(H_1,H_2)$, $\vec{j}:=(j_1,j_2,j_3)$, $\vec{\ell}:=(\ell_1,\ell_2,\ell_3)$, $\vec{r}:=(r_1,r_2)$ and
	$$\vec{Y}^{\vec{j}}\vec{X}^{\vec{\ell}}\vec{H}^{\vec{r}}:=Y_1^{j_1} Y_2^{j_2} Y_3^{j_3} X_1^{\ell_1}X_2^{\ell_2}X_3^{\ell_3}H_1^{r_1} H_2^{r_2}.$$
Lemma \ref{sl3-gen-on-v_klj-v2} implies that 
	\begin{equation}\label{maineqpropklm}
	\begin{split}
		\pi_{m_1,m_2}(\vec{Y}^{\vec{j}}\vec{X}^{\vec{\ell}}\vec{H}^{\vec{r}})v_{k,\ell,m} 
		&=\sum_{s=0}^{j_2+\ell_1+\ell_2+\ell_3} 
			c_{\vec{j},\vec{\ell},\vec{r},s}(k,\ell,m)\cdot\\
		&	v_{k-\ell_3-\ell_1-j_2+j_1+s,\ell-\ell_3-\ell_2+s,m+j_2+j_3-s}
	\end{split}
	\end{equation}
where 
$c_{\vec{j},\vec{\ell},\vec{r},s}(k,\ell,m)$ are polynomials in $k,\ell,m$. 
Let $S$ be the endomorphism of $V_{\pi_{m_1,m_2}}$ defined by
	\begin{equation}
	\label{def-s}
		S(v_{k,\ell,m})=\left\{ \begin{array}{cc} v_{k+1,\ell+1,m-1} & \text{ if } m \ge 1, \\ 0 & \text{ if } m=0. \end{array} \right.
	\end{equation}
Consider the operator 
	\begin{equation*}
		C_{\vec{j},\vec{\ell},\vec{r}}(k,\ell,m,S):=\sum_{s=0}^{j_2+\ell_1+\ell_2+\ell_3} c_{\vec{j},\vec{\ell},\vec{r},s}(k,\ell,m) S^s
	\end{equation*}
The equation \eqref{maineqpropklm} can now be rewritten as
	\begin{equation}\label{maineqpropklmS}
		\pi_{m_1,m_2}(\vec{Y}^{\vec{j}}\vec{X}^{\vec{\ell}}\vec{H}^{\vec{r}})v_{k,\ell,m} =C_{\vec{j},\vec{\ell},\vec{r}}(k,\ell,m,S)v_{k-\ell_3-\ell_1-j_2+j_1,\ell-\ell_3-\ell_2,m+j_2+j_3}
	\end{equation}
To compute the leading term of
	$C_{\vec{j},\vec{\ell},\vec{r}}(k,\ell,m,S)$ with respect to a monomial ordering $\succ$ defined below, we first introduce new variables
	\begin{equation}\label{new-var}
		x:=k-2\ell-m, \quad y:=-k+\ell-m, \quad z:=-2k+\ell-m.
	\end{equation}
Note that we have that
	\begin{equation}\label{old-var}
		k= y-z, \quad \ell=\frac{1}{3} (-x+3 y-2 z), \quad m= \frac{1}{3} (-x-3 y+z).
	\end{equation}		
Now consider the lexicographic ordering induced by 
	\begin{equation}\label{mon-ord}
		x \succ y \succ z \succ S. 
	\end{equation}
Using Lemma \ref{sl3-gen-on-v_klj-v2}  we see that the leading term of 
$C_{\vec{j},\vec{\ell},\vec{r}}(k,\ell,m,S)$ is the same as the leading term of
	$$	(k+S)^{j_2}(k(\ell-k-m)-m S)^{\ell_1} (-\ell^2+m S)^{\ell_2} (k \ell^2-m(k+\ell+m)S)^{\ell_3} \cdot$$
	$$	\cdot (-2k+\ell-m)^{r_1}(-2\ell+k-m)^{r_2}$$
which is equal to 
\begin{gather*}
	y^{j_2} \Big(\frac{S x}{3}\Big)^{\ell_1}\Big(-\frac{x^2}{9}\Big)^{\ell_2}\Big(\frac{x^2 y}{9}\Big)^{\ell_3} z^{r_1} x^{r_2} =
	\frac{(-1)^{\ell_2}}{3^{\ell_1+2(\ell_2+\ell_3)}} x^{\ell_1+2(\ell_2+\ell_3)+r_2} y^{j_2+\ell_3} z^{r_1} S^{\ell_1}.
\end{gather*}

We denote by $\Gamma_d$ the set of all tuples $(\vec{j},\vec{\ell},\vec{r})$ satisfying
	$$\sum_{i=1}^3 j_i+\sum_{i=1}^3 \ell_i+\sum_{i=1}^2 r_i \le d,\quad j_2\ell_2=0\quad\text{and}\quad r_2\leq 2.$$
Assume that
\begin{equation} \label{ldpropsumklm}
	\sum_{(\vec{j},\vec{\ell},\vec{r})\in \Gamma_d} 
	\lambda_{\vec{j},\vec{\ell},\vec{r}}\cdot
	\pi_{m_1,m_2}(\vec{Y}^{\vec{j}}\vec{X}^{\vec{\ell}}\vec{H}^{\vec{r}})
				\Big(\sum_{t=1}^{L} v_{k(t),\ell(t),m(t)} \Big)=0
\end{equation}
By the choice of $k(t),\ell(t),m(t)$ we have that triples of $v$-indices appearing in 
	$$\pi_{m_1,m_2}(\vec{Y}^{\vec{j}}\vec{X}^{\vec{\ell}}\vec{H}^{\vec{r}}) v_{k(t),\ell(t),m(t)}$$
are always different from triples of $v$-indices appearing in 
	$$\pi_{m_1,m_2}(\vec{Y}^{\vec{j}}\vec{X}^{\vec{\ell}}\vec{H}^{\vec{r}}) v_{k(t'),\ell(t'),m(t')}$$
	if $t \ne t'$. Therefore, the equation \eqref{ldpropsumklm} implies that for every $t=1,\ldots,L$, we have that
	\begin{equation*}
		\sum_{(\vec{j},\vec{\ell},\vec{r})\in \Gamma_d} 
		\lambda_{\vec{j},\vec{\ell},\vec{r}}\cdot
		\pi_{m_1,m_2}(\vec{Y}^{\vec{j}}\vec{X}^{\vec{\ell}}\vec{H}^{\vec{r}})
		 v_{k(t),\ell(t),m(t)}=0.
	\end{equation*}
The equation \eqref{maineqpropklmS} implies that
	\begin{equation}\label{maineqpropklm2} 
		0 = 	\sum_{(\vec{j},\vec{\ell},\vec{r})\in \Gamma_d} \lambda_{\vec{j},\vec{\ell},\vec{r}}\cdot C_{\vec{j},\vec{\ell},\vec{r}}(k(t),\ell(t),m(t),S)v_{k(t)-\ell_3-\ell_1-j_2+j_1,\ell(t)-\ell_3-\ell_2,m(t)+j_2+j_3}
	\end{equation}
For every $(\vec{j},\vec{\ell},\vec{r})\in \Gamma_d$ let us define the set
 	\begin{eqnarray*}
		\Delta_{(\vec{j},\vec{\ell},\vec{r})}
		&:=&\{ (d_1,d_2,d_3)\in \ZZ^3\colon  d_1=-\ell_3-\ell_1-j_2+j_1+s, d_2=-\ell_3-\ell_2+s,\\
		&&	d_3=j_2+j_3-s \;\;\text{for some}\;\; 0 \le s \le j_2+\sum_{i=1}^3\ell_i\}.
	\end{eqnarray*}	
Fix a vector $\vec{e}:=(e_1,e_2)\in \ZZ^2$ and define a set
\begin{eqnarray*}
\Lambda_{\vec{e}} & := & \{ (\vec{j},\vec{\ell},\vec{r})\in \Gamma_d\colon (e_1+d_2,d_2,e_2-d_2)\in \Delta_{(\vec{j},\vec{\ell},\vec{r})}\;\text{for some}\; d_2\in \ZZ\} \\
& = & \{ (\vec{j},\vec{\ell},\vec{r})\in \Gamma_d\colon  j_1=j_2+\ell_1-\ell_2+e_1, j_3= -j_2+\ell_2+\ell_3+e_2\} \\
\end{eqnarray*}
Note that sets $\Lambda_{\vec{e}}$ are pairwise disjoint and that they cover $\Gamma_d$. Let us define a vector function $\vec{f}$ of $j_2,\vec{\ell},\vec{e}$ by
$$\vec{f}(j_2,\vec{\ell},\vec{e})=(j_2+\ell_1-\ell_2+e_1,j_2,-j_2+\ell_2+\ell_3+e_2).$$
Clearly, $\Lambda_{\vec{e}} = \{ (\vec{j},\vec{\ell},\vec{r})\in \Gamma_d\colon  (j_1,j_2,j_3)=\vec{f}(j_2,\vec{\ell},\vec{e})\}$.
Let $\Lambda_{\vec{e}}'$ be the projection of $\Lambda_{\vec{e}}$ along $j_1$ and $j_3$.
The equation \eqref{maineqpropklm2} implies that
	\begin{eqnarray*}
		0
		&=&	\sum_{(j_2,\vec{\ell},\vec{r})\in \Lambda'_{\vec{e}}}
			\lambda_{\vec{f}(j_2,\vec{\ell},\vec{e}),\vec{\ell},\vec{r}} \cdot
			C_{\vec{f}(j_2,\vec{\ell},\vec{e}),\vec{\ell},\vec{r}}(k(t),\ell(t),m(t),S)\\ 
		&&    \hspace{2cm} v_{k(t)+e_1-\ell_2-\ell_3,\ell(t)-\ell_2-\ell_3,m(t)+e_2+\ell_2+\ell_3},\\
		&=&	\sum_{(j_2,\vec{\ell},\vec{r})\in \Lambda'_{\vec{e}}}
			\left(\lambda_{\vec{f}(j_2,\vec{\ell},\vec{e}),\vec{\ell},\vec{r}} \cdot
			C_{\vec{f}(j_2,\vec{\ell},\vec{e}),\vec{\ell},\vec{r}}(k(t),\ell(t),m(t),S)\right.\\ 
		&&    \hspace{2cm} \left.  S^{d-\ell_2-\ell_3} \right) v_{k(t)+e_1-d,\ell(t)-d,m(t)+e_2+d}.
	\end{eqnarray*}
Defining operators
	$$P_{j_2,\vec{\ell},\vec{r}}:=C_{\vec{f}(j_2,\vec{\ell},\vec{e}),\vec{\ell},\vec{r}}(k(t),\ell(t),m(t),S)S^{d-\ell_2-\ell_3},$$
we get that
\begin{equation}\label{zero-linear-comb}
0=\sum_{(j_2,\vec{\ell},\vec{r})\in \Lambda'_{\vec{e}}}
		\left(\lambda_{\vec{f}(j_2,\vec{\ell},\vec{e}),\vec{\ell},\vec{r}} P_{j_2,\vec{\ell},\vec{r}}\right)
		 v_{k(t)+e_1-d,\ell(t)-d,m(t)+e_2+d}.
\end{equation}		 
We will prove by contradiction that $\lambda_{\vec{f}(j_2,\vec{\ell},\vec{e}),\vec{\ell},\vec{r}}=0$ for all $j_2,\vec{\ell},\vec{r}$ 
and hence	 $\lambda_{\vec{j},\vec{\ell},\vec{r}}=0$ for all $\vec{j},\vec{\ell},\vec{r}\in \Gamma_d$ in \eqref{ldpropsumklm}.
Among tuples $(j_2,\vec{\ell},\vec{r})$ with $\lambda_{\vec{f}(j_2,\vec{\ell},\vec{e}),\vec{\ell},\vec{r}}\neq 0$ choose a tuple $(j_2',\vec{\ell}',\vec{r}')$
such that the operator $P_{j_2,\vec{\ell},\vec{r}}$ has the highest leading term with respect to the monomial ordering \eqref{mon-ord}.
By the following claim such tuple is unique.\\

\noindent \textbf{Claim 1:} Different operators $P_{j_2,\vec{\ell},\vec{r}}$ have different leading terms.\\ 

From the discussion above, it follows that the leading term of the operator
	$P_{j_2,\vec{\ell},\vec{r}}$ is 
\begin{gather*}
	\frac{(-1)^{\ell_2}}{3^{\ell_1+2(\ell_2+\ell_3)}} {x(t)}^{\ell_1+2(\ell_2+\ell_3)+r_2} {y(t)}^{j_2+\ell_3} {z(t)}^{r_1} S^{\ell_1+d-\ell_2-\ell_3}.
\end{gather*}
Pick any $\alpha,\beta,\gamma,\delta \in \NN_0$. We will show that there exists at most one tuple $(j_2,\vec{\ell},\vec{r})\in\NN_0^6$ such that 
	\begin{gather}
		\label{ueq1} \ell_1+2(\ell_2+\ell_3)+r_2 = \alpha \\
		\label{ueq2} j_2+\ell_3 = \beta \\
		\label{ueq3} r_1=\gamma \\
		\label{ueq4} \ell_1+d-\ell_2-\ell_3=\delta \\
		\label{ueq5} j_2 \ell_2=0 \\
		\label{ueq6} r_2 \le 2
	\end{gather}
Subtracting \eqref{ueq4} from \eqref{ueq1} we obtain
	\begin{equation}\label{div3eq}
		3(\ell_2+\ell_3)+r_2=\alpha-\delta+d
	\end{equation}
which together with \eqref{ueq6} implies that
	\begin{gather}
		\label{ueq7} \ell_2+\ell_3=(\alpha-\delta+d) \operatorname{div} 3 =: \varepsilon\\
		r_2 = (\alpha-\delta+d) \operatorname{mod} 3 
	\end{gather}
Equations \eqref{ueq4} and \eqref{ueq7} imply that 
\begin{equation}
\ell_1=\delta+\varepsilon-d.
\end{equation}
Subtracting \eqref{ueq7} from \eqref{ueq2} we obtain
\begin{equation}
j_2-\ell_2=\beta-\varepsilon
\end{equation}
which together with \eqref{ueq5} implies that
\begin{gather}
j_2=(\beta-\varepsilon)^+ :=\max\{\beta-\varepsilon,0\} \\
\label{ueq8} \ell_2=(\beta-\varepsilon)^- :=\max\{\varepsilon-\beta,0\} 
\end{gather}
From \eqref{ueq7} and \eqref{ueq8} we obtain
\begin{equation}
\ell_3=\varepsilon-(\beta-\varepsilon)^-
\end{equation}
We already know that $r_1=\gamma$ from \eqref{ueq3}. 
This proves Claim 1.\\

For the tuple $(j_2',\vec{\ell}',\vec{r}')$ let $\alpha',\beta',\gamma',\delta'$ be defined as in \eqref{ueq1}-\eqref{ueq4}.
Now we observe the coefficients at the vector
	$$v_{k(t)+e_1-d+\delta',\ell(t)-d+\delta',m(t)+e_2+d-\delta'}$$
on both sides of \eqref{zero-linear-comb} and get
	$$0=\frac{(-1)^{\ell'_2}}{3^{\ell'_1+2(\ell'_2+\ell'_3)}} {x(t)}^{\alpha'} {y(t)}^{\beta'} {z(t)}^{\gamma'}+
		\sum_{\substack{\alpha,\beta,\gamma\in \NN_0,\\
			0\leq \alpha+\beta+\gamma\leq 2d,\\
			(\alpha',\beta',\gamma')\succ (\alpha,\beta,\gamma)}} c_{\alpha,\beta,\gamma} x(t)^{\alpha}y(t)^{\beta}z(t)^{\gamma},$$
for some $c_{\alpha,\beta,\gamma}\in \CC$. Since this must hold for all $t=1,\ldots,L$, this is a contradiction by the following claim.\\

\noindent \textbf{Claim 2:} All vectors 
		$$\begin{pmatrix} x(t)^{\alpha_1} y(t)^{\alpha_2} z(t)^{\alpha_3} \end{pmatrix}_{t=1,\ldots,L}$$
	where $0 \le \sum_{i=1}^3 \alpha_i \le 2d$ are linearly independent.\\

By Vandermonde determinant one can show that all vectors 
		$$\begin{pmatrix} k(t)^{\alpha_1} \ell(t)^{\alpha_2} m(t)^{\alpha_3} \end{pmatrix}_{t=1,\ldots,L}=$$
		$$\begin{pmatrix} (3d+1)^{\alpha_1} \cdot t^{\alpha_1+\alpha_2\cdot (2d+1)+\alpha_3\cdot (4d^2+4d+1)} \end{pmatrix}_{t=1,\ldots,L}$$
where $0 \le \sum_{i=1}^3 \alpha_i \le 2d$ are linearly independent. Indeed, for different triples $(\alpha_1,\alpha_2,\alpha_3)$ satisfying 
$0 \le \sum_{i=1}^3 \alpha_i \le 2d$, the exponents
		$$\alpha_1+\alpha_2\cdot (2d+1)+\alpha_3\cdot (4d^2+4d+1)$$
are different, with the highest exponent $L-1$ reached at $\alpha_1=\alpha_2=0$, $\alpha_3=2d$.
By using \eqref{new-var} and \eqref{old-var} we see that
	$$\span\{\begin{pmatrix} x(t)^{\alpha_i} y(t)^{\alpha_i} z(t)^{\alpha_i}  \end{pmatrix}_{t=1,\ldots,L}\colon 0 \le \alpha_1+\alpha_2+\alpha_3\le 2d\}$$
is equal to 
	$$\span\{\begin{pmatrix} k(t)^{\alpha_i} \ell(t)^{\alpha_i} m(t)^{\alpha_i} \end{pmatrix}_{t=1,\ldots,L}\colon 0 \le \alpha_1+\alpha_2+\alpha_3\le 2d\}.$$
Therefore also all vectors 
	$$\begin{pmatrix} x(t)^{\alpha_1} y(t)^{\alpha_2} z(t)^{\alpha_3} \end{pmatrix}_{t=1,\ldots,L}$$
where $0 \le \sum_{i=1}^3 \alpha_i \le 2d$ are linearly independent. This proves Claim 2.
\end{proof}

\subsection{An explicit basis over the center of $U(\sl_3)$}	
\label{usl3-gen}

It is well-known that the center of $U(\sl_3)$ is generated by two algebraically independent elements $Z_2$ and $Z_3$ which are also called Casimir operators.
The algorithm for computing $Z_2$ and $Z_3$ can be found in \cite{ga}, while explicit expressions are in \cite[p.\ 984]{cat}. We have
\begin{eqnarray*}
	Z_2	&=& H_1^2+H_1H_2+H_2^2+3Y_1X_1+3Y_2X_2+3Y_3X_3+3H_1+3H_2
\end{eqnarray*}
and
\begin{equation*}\label{defz3}
\begin{split}
	Z_3
		&=3 Y_1 Y_2 X_3 + 3 Y_3 X_1 X_2 +\frac{1}{9}(H_1 + 2 H_2)(6 + 2 H_1 + H_2)(-3 + H_1 - H_2)  \\ 
		&+ Y_1 X_1 (H_1+2H_2)-Y_2 X_2(6+2 H_1+H_2)+Y_3 X_3 (-3+H_1-H_2)
\end{split} 
\end{equation*}
 (Our choice of $Z_2$ and $Z_3$ is equal to $h$ and $-\frac{1}{9}k-h$ in the notation of \cite[p.\ 984]{cat}.)

By \cite[Schur's Lemma]{hum} an irreducible representation maps a central element into a scalar multiple of identity. 
Therefore it is enough to calculate $\pi_{m_1,m_2}(Z_i)v_{0,0,0}$ to determine this scalar.
From Lemma \ref{sl3-gen-on-v_klj-v2}, we get that
\begin{equation}\label{defcasinv}
\begin{split}
d_2(m_1,m_2) &:=	\pi_{m_1,m_2}(Z_2)=m_1^2+m_1m_2+m_2^2+3m_1+3m_2,\\
d_3(m_1,m_2) &:= \pi_{m_1,m_2}(Z_3)=\frac{1}{9}(m_1 + 2 m_2)(6 + 2 m_1 + m_2) (-3 + m_1 - m_2).
\end{split}
\end{equation}

\begin{proposition}\label{explicitbasis}
Monomials 
\begin{equation}\label{genmon}
	Y_1^{j_1} Y_2^{j_2} Y_3^{j_3} X_1^{\ell_1}X_2^{\ell_2}X_3^{\ell_3} H_1^{r_1} H_2^{r_2}
\end{equation}
where the powers $j_1,j_2,j_3,\ell_1,\ell_2,\ell_3,r_1,r_2 \in \NN_0$ are such that $j_2 \ell_2=0$ and $r_2 \le 2$
form a basis of $U(\sl_3)$ over its center.
\end{proposition}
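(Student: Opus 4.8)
The plan is to prove the two halves of "basis" by different routes: linear independence of the monomials \eqref{genmon} over the center $\CC[Z_2,Z_3]$ via the representations supplied by Proposition \ref{propklm}, and spanning of $U(\sl_3)$ over $\CC[Z_2,Z_3]$ by passing to the associated graded algebra.

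\emph{Independence} follows the pattern of $\eqref{mainthm2-pt3}\Rightarrow\eqref{mainthm2-pt1}$ in Theorem \ref{mainthm2}. Suppose $\sum z_{\vec j,\vec\ell,\vec r}\,Y_1^{j_1}Y_2^{j_2}Y_3^{j_3}X_1^{\ell_1}X_2^{\ell_2}X_3^{\ell_3}H_1^{r_1}H_2^{r_2}=0$ with the $z_{\vec j,\vec\ell,\vec r}\in\CC[Z_2,Z_3]$ not all zero, the index set being finite and consisting of tuples with $j_2\ell_2=0$ and $r_2\le 2$; choose $d$ with this index set inside $\Gamma_d$. For $m_1,m_2$ large enough, apply $\pi_{m_1,m_2}$ and evaluate at $\sum_{t=1}^{L}v_{k(t),\ell(t),m(t)}$; since $\pi_{m_1,m_2}(Z_i)=d_i(m_1,m_2)\,\mathrm{id}$ and the resulting vectors are linearly independent by Proposition \ref{propklm}, we get $z_{\vec j,\vec\ell,\vec r}\bigl(d_2(m_1,m_2),d_3(m_1,m_2)\bigr)=0$ for all large $m_1,m_2$. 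The polynomials $d_2,d_3$ of \eqref{defcasinv} are algebraically independent (their Jacobian in $(m_1,m_2)$ is not identically zero, e.g.\ it is nonzero at $(1,1)$), so $(m_1,m_2)\mapsto(d_2,d_3)$ is dominant and the lattice points with $m_1,m_2$ large are Zariski dense; hence every $z_{\vec j,\vec\ell,\vec r}=0$, a contradiction.

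\emph{Spanning.} Let $S=\gr U(\sl_3)$, the polynomial ring on the eight basis vectors (PBW), and let $\sigma(Z_2),\sigma(Z_3)\in S$ be the leading parts, of degrees $2$ and $3$. Here $\sigma(Z_2)=H_1^2+H_1H_2+H_2^2+3(Y_1X_1+Y_2X_2+Y_3X_3)$ is a nondegenerate quadratic form, hence irreducible, and a comparison of the $Y_3X_3$-coefficients shows it does not divide $\sigma(Z_3)$; it follows that $\sigma(Z_2),\sigma(Z_3)$ form an $S$-regular sequence and are algebraically independent, whence $S$ is a free graded module over $R=\CC[\sigma(Z_2),\sigma(Z_3)]$ and $\gr\CC[Z_2,Z_3]=R$. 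Fix the monomial order on $S$ refining the weight vector that gives $(Y_1,Y_2,Y_3,X_1,X_2,X_3,H_1,H_2)$ the weights $(1,4,1,1,4,1,2,3)$. Then the leading monomial of $\sigma(Z_2)$ is $Y_2X_2$, while
\[
\sigma(Z_3)+\tfrac13(2H_1+H_2)\sigma(Z_2)=3Y_1Y_2X_3+3Y_3X_1X_2+3Y_1X_1(H_1+H_2)+3Y_3X_3H_1+\tfrac19(2H_1+H_2)^3
\]
has leading monomial $H_2^3$. These two leading monomials are coprime, so by Buchberger's criterion the two polynomials form a Gröbner basis of the ideal $(\sigma(Z_2),\sigma(Z_3))$; hence the monomials of $S$ divisible neither by $Y_2X_2$ nor by $H_2^3$ — exactly the symbols of the monomials \eqref{genmon} — form a $\CC$-basis of $S/(\sigma(Z_2),\sigma(Z_3))$, and since $S$ is $R$-free, graded Nakayama promotes this to an $R$-basis of $S$. (Consistently, both $S/(\sigma(Z_2),\sigma(Z_3))$ and the $\CC$-span of the monomials \eqref{genmon} have Hilbert series $\tfrac{(1-q^2)(1-q^3)}{(1-q)^8}$.)

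To transfer the graded statement to $U(\sl_3)$: for $u$ of filtration degree $n$, expand its symbol as $\sigma_n(u)=\sum_M p_M(\sigma Z_2,\sigma Z_3)\sigma(M)$ over the monomials $M$ of \eqref{genmon}, with $p_M$ weighted-homogeneous of weighted degree $n-\deg M$; then $u-\sum_M p_M(Z_2,Z_3)\,M$ has filtration degree $<n$, and induction on $n$ gives that \eqref{genmon} spans $U(\sl_3)$ over $\CC[Z_2,Z_3]$. With the independence above, this proves the proposition. The spanning step is the crux: one cannot simply take $\{\sigma(Z_2),\sigma(Z_3)\}$ as a Gröbner basis, since no monomial order makes both $Y_2X_2$ and $H_2^3$ leading (if $Y_2X_2\succ H_2^2$ then $Y_2X_2H_2\succ H_2^3$, so $\sigma(Z_2)$ forces a larger leading monomial on $\sigma(Z_3)$); the resolution is to first clear $\sigma(Z_3)$ of its $Y_2X_2$-divisible part and then exploit that the two new leading monomials are coprime, the explicit weight vector being chosen precisely so that both clearing steps strictly lower the weight.
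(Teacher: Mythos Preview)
Your proof is correct. The independence half is exactly what the paper does (it cites Proposition~\ref{propklm} in one line; you spell out the passage through $z\mapsto z(d_2,d_3)$ and the algebraic independence of $d_2,d_3$, which is the content of Lemma~\ref{technical-lemma2}(2)).

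For spanning the two arguments share the same algebraic core but are packaged differently. The paper works directly in $U(\sl_3)$: it orders the PBW monomials (in the order $Y_2,X_2,H_2,Y_3,X_3,Y_1,X_1,H_1$) by degree--reverse--lex, notes that the largest monomial in $Z_2$ is $3Y_2X_2$ and the largest monomial in $Z_3+\tfrac13 Z_2(6+2H_1+H_2)$ is $\tfrac19 H_2^3$, and then runs the two resulting substitution rules until the well-ordering forces termination; a final degree induction swaps the PBW ordering of the generators. You instead pass to $\gr U(\sl_3)$, exhibit a weight order under which $\sigma(Z_2)$ and $\sigma(Z_3)+\tfrac13(2H_1+H_2)\sigma(Z_2)$ have coprime leading monomials $Y_2X_2$ and $H_2^3$, invoke Buchberger's criterion, and then use regularity of $(\sigma(Z_2),\sigma(Z_3))$ to get freeness of $S$ over $R$ and graded Nakayama to lift the standard monomials to an $R$-basis; the final filtration step is the analogue of the paper's degree induction. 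So both proofs isolate the same pair of ``leading monomials'' $Y_2X_2$ and $H_2^3$ and the same correction $Z_3\rightsquigarrow Z_3+\tfrac13(2H_1+H_2)Z_2$; your framework explains \emph{why} the paper's rewriting terminates (Gr\"obner basis with coprime initials) and gives the Hilbert-series check for free, at the cost of importing the ``regular sequence $\Rightarrow$ free over the subring'' fact, whereas the paper's argument is more elementary and self-contained.
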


\begin{proof}
Linear independence of monomials \eqref{genmon} follows from Proposition \ref{propklm}. It remains to prove that they
span $U(\sl_3)$ over its center.

Let $U(\sl_3)_k$ denote the $\CC$-linear span of monomials of the form
\begin{equation}\label{degrevlex0}
	Y_2^{\ell_2} X_2^{j_2} H_2^{r_2} Y_3^{\ell_3}X_3^{j_3} Y_1^{\ell_1} X_1^{j_1}   H_1^{r_1}
\end{equation}
of degree at most $k$ where the degree equals to the sum of the exponents. We write $\text{deg}(m)$ for the degree of the
monomial of the form \eqref{degrevlex0}.
We define the set
	$$M_k:=\{ m\text{ of the form } \eqref{degrevlex0} \colon \text{deg}(m)\leq k,\;j_2\ell_2=0\text{ and } r_2\leq 2 \}.$$
We will prove that 
	\begin{equation}\label{equality}
		U(\sl_3)_k=\span_Z(M_k)
	\end{equation}
where $Z$ stands for the center of $U(\sl_3)$. 
It suffices to prove that every monomial of the form \eqref{degrevlex0} belongs to $\span_Z(M_k)$.
Let us order the monomials \eqref{degrevlex0} with respect to the degree reverse lexicographic ordering.
Note that the largest monomial in the definition of $Z_2$ is $3 Y_2 X_2$  and that the largest monomial
in the definition of $Z_3+\frac{1}{3}Z_2(6+2H_1+H_2)$ is $\frac{1}{9} H_2^3$. If we express $Y_2 X_2$ by $Z_2$ and other monomials
and similarly $H_2^3$ by $Z_3+\frac{1}{3}Z_2(6+2H_1+H_2)$ and other monomials we get two substitution rules. (Note that the first substitution
rule decreases $\min\{j_2,\ell_2\}$ but it can increase $r_2$ and that the second substitution rule decreases $r_2$
but can increase $\min\{j_2,\ell_2\}$.)
%
If we start with a monomial with either $j_2 \ell_2>0$ or $r_2 \ge 3$ and keep applying these substitution rules
whenever possible we get a decreasing sequence of expressions with respect to the degree reverse lexicographic ordering.
Since this ordering is known to be a well-ordering, this sequence must stop at some point.
This means that we finish with an expression whose monomials all satisfy $j_2 \ell_2=0$ and $r_2 \le 2$. This proves \eqref{equality}.

By the PBW theorem we know that every element of $U(\sl_3)$ belongs to $U(\sl_3)_k$ for some $k\in \NN_0$.
We define the set
	$$\widetilde{M}_k:=\{ m\text{ of the form } \eqref{genmon} \colon \text{deg}(m)\leq k,\;j_2\ell_2=0\text{ and } r_2\leq 2 \},$$
where $\deg(m)$ is a sum of exponents in $m$.
To finish the proof of the proposition it remains to prove that 
	\begin{equation}\label{rem-eq}
		\span_Z M_k=\span_Z\widetilde{M}_k.
	\end{equation}
We prove \eqref{rem-eq} by induction on $k$.
The base of induction $k=1$ is clear. We assume that \eqref{rem-eq} for all $k\leq n$ for some $n\in \NN$.
By the relations in $U(\sl_3)$ we have that
	\begin{equation*}
		Y_2^{\ell_2} X_2^{j_2} H_2^{r_2} Y_3^{\ell_3}X_3^{j_3} Y_1^{\ell_1} X_1^{j_1}   H_1^{r_1}=
		Y_1^{\ell_1}Y_2^{\ell_2}Y_3^{\ell_3}X_1^{\ell_1}X_2^{\ell_2}X_3^{\ell_3} H_1^{r_1}  H_2^{r_2}+m'
	\end{equation*}
where $m'$ is a $\ZZ$-linear combination of monomials of the form \eqref{degrevlex0} of degree at most 
	$n-1:=\sum_{i=1}^3 (\ell_i+j_i)+r_1+r_2-1.$
By \eqref{equality} we have that $m'\in \span_Z M_{n-1}$ and by the induction hypothesis we have that $m'\in \span_Z \widetilde{M}_{n-1}$.
This proves \eqref{rem-eq}.
\end{proof}

\begin{lemma}\label{technical-lemma2}
\begin{enumerate}
\item\label{tech-lemma-pt1} Every polynomial $g \in \CC[x,y]$ which satisfies $$g(m_1,m_2)=0$$ for all sufficiently large integers $m_1,m_2$ is equal to zero.
\item\label{tech-lemma-pt2} Every polynomial $f \in \CC[x,y]$ which satisfies 
\begin{equation}\label{algind}
	f(d_2(m_1,m_2),d_3(m_1,m_2))=0
\end{equation}
for all sufficiently large integers $m_1,m_2$ is equal to zero.
\item\label{tech-lemma-pt3} Every vectors $u_1,\ldots,u_k \in \CC[x,y]^n$, such that the vectors
\begin{equation}\label{zlindep}
	u_i(d_2(m_1,m_2),d_3(m_1,m_2)), i=1,\ldots,k
\end{equation}
are linearly dependent over $\CC$ for all sufficiently large integers $m_1,m_2$,
are linearly dependent over $\CC[x,y]$.
\end{enumerate}
\end{lemma}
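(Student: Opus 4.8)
The plan is to establish the three parts in sequence, using \eqref{tech-lemma-pt1} to prove \eqref{tech-lemma-pt2} and \eqref{tech-lemma-pt2} to prove \eqref{tech-lemma-pt3}; everything is elementary except one algebraic-independence input isolated in \eqref{tech-lemma-pt2}.

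For \eqref{tech-lemma-pt1}, pick $N$ with $g(m_1,m_2)=0$ for all integers $m_1,m_2\ge N$, and write $g=\sum_i g_i(y)x^i$ with $g_i\in\CC[y]$. For each fixed integer $m_2\ge N$ the univariate polynomial $x\mapsto g(x,m_2)=\sum_i g_i(m_2)x^i$ vanishes at every integer $m_1\ge N$, hence is the zero polynomial, so $g_i(m_2)=0$ for all $i$. Since this holds for infinitely many $m_2$, each $g_i$ is zero, so $g=0$. For \eqref{tech-lemma-pt2}, set $g(x,y):=f(d_2(x,y),d_3(x,y))\in\CC[x,y]$; by \eqref{algind} and \eqref{tech-lemma-pt1} we get $g\equiv0$, i.e.\ $f(d_2,d_3)=0$ identically. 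Since $d_2$ and $d_3$ are algebraically independent over $\CC$ this forces $f=0$.

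To justify the algebraic independence of $d_2,d_3$, I would use the Jacobian criterion (valid over $\CC$): it suffices that $\partial_x d_2\cdot\partial_y d_3-\partial_y d_2\cdot\partial_x d_3$ is not the zero polynomial. A short computation from the explicit formulas \eqref{defcasinv} shows that its top-degree homogeneous part equals $-3xy(x+y)$, which is nonzero; indeed with $P=2x+y$, $Q=x+2y$ one gets the leading form $\tfrac19(P+Q)\bigl(2(P-Q)^2-PQ\bigr)=\tfrac19\cdot3(x+y)\cdot(-9xy)$. (Alternatively, one may invoke the well-known algebraic independence of $Z_2,Z_3$ together with injectivity of the Harish--Chandra homomorphism, but the Jacobian computation keeps the argument self-contained.)

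For \eqref{tech-lemma-pt3}, argue by contradiction: suppose $u_1,\ldots,u_k$ are linearly independent over $\CC[x,y]$, equivalently (clearing denominators) over $\CC(x,y)$. Then $k\le n$ and the matrix $M:=[\,u_1\mid\cdots\mid u_k\,]$ has rank $k$ over $\CC(x,y)$, so some $k\times k$ minor of $M$ is a nonzero polynomial $p\in\CC[x,y]$. By hypothesis, for all sufficiently large integers $m_1,m_2$ the vectors $u_i(d_2(m_1,m_2),d_3(m_1,m_2))$, $i=1,\ldots,k$, are $\CC$-linearly dependent, so the matrix formed by their coordinates has rank $<k$ there; in particular every $k\times k$ minor vanishes, giving $p(d_2(m_1,m_2),d_3(m_1,m_2))=0$ for all sufficiently large integers $m_1,m_2$. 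Applying \eqref{tech-lemma-pt2} to $p$ yields $p=0$, a contradiction, so $u_1,\ldots,u_k$ are linearly dependent over $\CC[x,y]$. The only non-formal step is the algebraic independence of $d_2,d_3$ in \eqref{tech-lemma-pt2}; the rest is routine bookkeeping with evaluations, rank, and minors, directly paralleling Lemma~\ref{technical-lemma}.
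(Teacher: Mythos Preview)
Your proof is correct and matches the paper closely. Parts \eqref{tech-lemma-pt1} and \eqref{tech-lemma-pt3} are handled exactly as in the paper (you supply the standard two-step argument for \eqref{tech-lemma-pt1} where the paper just says ``well-known''; for \eqref{tech-lemma-pt3} both proofs argue via vanishing of the maximal minors and then invoke \eqref{tech-lemma-pt2}).

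The only noteworthy difference is in \eqref{tech-lemma-pt2}. Both proofs hinge on the same computation: the Jacobian of $(d_2,d_3)$ is $-3(1+m_1)(1+m_2)(2+m_1+m_2)$, whose leading form $-3xy(x+y)$ is exactly what you found. You then invoke the Jacobian criterion for algebraic independence over $\CC$ and conclude in one line that $f(d_2,d_3)\equiv 0$ forces $f=0$. The paper instead unpacks this criterion by hand: from $f(d_2,d_3)\equiv 0$ it differentiates via the chain rule, inverts the (pointwise nonvanishing) Jacobian matrix to get $\partial_x f(d_2,d_3)=\partial_y f(d_2,d_3)=0$, and iterates to kill all Taylor coefficients of $f$. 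Your route is slightly more economical since it cites the criterion as a black box; the paper's route is more self-contained but is really just an ad hoc proof of that same criterion in this special case. Either way, the essential input is identical.
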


\begin{proof}
Part \eqref{tech-lemma-pt1} is well-known and easy to prove.

To prove part \eqref{tech-lemma-pt2}, assume that \eqref{algind} is true for all sufficiently large integers $m_1,m_2$.
By part \eqref{tech-lemma-pt1}, it follows that \eqref{algind} is true for all $m_1,m_2 \in \CC$. 
Let us compute the partial derivatives of \eqref{algind} with respect to $m_1$ and $m_2$ by using the chain rule.
We get that
\begin{equation}
\left[\frac{\partial f}{\partial x}(d_2,d_3),\frac{\partial f}{\partial y}(d_2,d_3)\right]
\left[ \begin{array}{cc} \frac{\partial d_2}{\partial m_1} & \frac{\partial d_2}{\partial m_2} \\
\frac{\partial d_3}{\partial m_1} & \frac{\partial d_3}{\partial m_2} \end{array} \right] =
\left[0,0\right]
\end{equation}
Since
\begin{equation}
\det \left[ \begin{array}{cc} \frac{\partial d_2}{\partial m_1} & \frac{\partial d_2}{\partial m_2} \\
\frac{\partial d_3}{\partial m_1} & \frac{\partial d_3}{\partial m_2} \end{array} \right] =
	-3(1+m_1)(1+m_2)(2+m_1+m_2)
\end{equation}
is nonzero for all positive reals $m_1$ and $m_2$ it follows that
\begin{gather}
\frac{\partial f}{\partial x}(d_2(m_1,m_2),d_3(m_1,m_2))=0 \\
\frac{\partial f}{\partial y}(d_2(m_1,m_2),d_3(m_1,m_2))=0
\end{gather}
for all positive reals $m_1$ and $m_2$, thus for all complex $m_1,m_2$ by part (1).
By induction, we show that 
\begin{equation}
\frac{\partial^{i+j} f}{\partial^i x \, \partial^j y}(d_2(m_1,m_2),d_3(m_1,m_2))=0
\end{equation}
for all $i,j \in \NN_0$ and all $m_1,m_2 \in \CC$. It follows that $f \equiv 0$.

To prove part \eqref{tech-lemma-pt3}, consider the matrix $U=\left[u_1,\ldots,u_k \right]$.
By assumption, each maximal subdeterminant of $U(d_2(m_1,m_2),d_3(m_1,m_2))$
is zero for all sufficiently large integers $m_1$ and $m_2$. By part \eqref{tech-lemma-pt2},
it follows that each maximal subdeterminant of $U$ is identically zero.
Thus $u_1,\ldots,u_k$ are linearly dependent over $\CC(x,y)$.
By clearing denominators, we see that they are also linearly dependent over $\CC[x,y]$.
\end{proof}

We are now ready to prove Theorem \ref{mainthm3}.

%

\begin{proof}[Proof of Theorem \ref{mainthm3}]
	To prove the implication $\eqref{mainthm3-pt1} \Rightarrow \eqref{mainthm3-pt2}$, one
	has to show that there exists $d\in \NN$
	such that for every $m_1, m_2\in \NN$, $m_1\geq d$, $m_2\geq d$, the elements 
	$\pi_{m_1,m_2}(p_1)$,$\ldots$, $\pi_{m_1,m_2}(p_k)$ are linearly dependent, where $\pi_{m_1,m_2} \in \I_d$.
	Dividing the equation
	$\sum_{i=1}^k z_ip_i=0$ by the common factor of $z_1,\ldots,z_k$, where $z_i\in \CC[Z_2,Z_3]$ for each $i$, we may assume that
	$z_1,\ldots,z_k$ do not share a common nontrivial factor.
	Applying $\pi_{m_1,m_2}$ to the equation $\sum_{i=1}^k z_ip_i=0$, one gets
		$$0=\sum_{i=1}^k z_i(d_2(m_1,m_2),d_3(m_1,m_2))\pi_{m_1,m_2}(p_i).$$
	It suffices to prove that there exists $d\in \NN$ such that for every $m_1\geq d,m_2\geq d$ at least one of the coefficients
	$z_i(d_2(m_1,m_2),d_3(m_1,m_2))$ is nonzero. Let us assume on the contrary that such $d$ does not exist. Then there exists a sequence
	$(m_1^{(n)},m_2^{(n)})\in \NN^2$, $n\in \NN$, satisfying $\max\{m_1^{(n)},m_2^{(n)}\}< \min\{m_1^{(n+1)},m_2^{(n+1)}\}$ for every $n\in \NN$ and
	$z_i(d_2(m^{(n)}_1,m^{(n)}_2),d_3(m^{(n)}_1,m^{(n)}_2))=0$ for each $i$ and every $n\in \NN$.
	By the form of $d_2(m_1,m_2)$, the sequence $d_2(m^{(n)}_1,m^{(n)}_2)$ is strictly increasing 
	and hence the polynomials $z_1,\ldots,z_k$ share infinitely many common zeroes.
	This implies by Bezout's theorem (see \cite{ful}) that they share a nontrivial factor, leading to a contradiciton.

	The implication $\eqref{mainthm3-pt2} \Rightarrow \eqref{mainthm3-pt3}$ is trivial.

	The proof of the implication $\eqref{mainthm3-pt3} \Rightarrow \eqref{mainthm3-pt1}$
	is almost the same as the proof of the same implication of Theorem \ref{mainthm2}.
	Namely, the form of monomials $f_i$ is given by Proposition \ref{explicitbasis}, 
	the coefficients $t_i$ belong to $\CC[Z_2,Z_3]$, while Proposition \ref{irr-repr-prop} and Lemma \ref{technical-lemma}
	are replaced by Proposition \ref{propklm} and part \eqref{tech-lemma-pt3} of Lemma \ref{technical-lemma2}, respectively.
\end{proof}

\end{document}